\newtheorem{theorem}{Theorem}[section]
\newtheorem{proposition}[theorem]{Proposition}
\newtheorem{lemma}[theorem]{Lemma}
\newtheorem{corollary}[theorem]{Corollary}
\newtheorem{example}[theorem]{Example}
\newcommand{\defn}[1]{{\color{DarkGreen}\emph{#1}}}
\newcommand{\ie}{\text{i.e.}\;}
\newcommand{\nc}{N\!C}
\newcommand{\pe}{P\!E}
\renewcommand{\qq}{\mathbf{q}}
\renewcommand{\aa}{\mathbf{a}}
\newcommand{\xx}{\mathbf{x}}
\newcommand{\yy}{\mathbf{y}}
\newcommand{\zz}{\mathbf{z}}
\newcommand{\ww}{\mathbf{w}}
\newcommand{\zero}{\mathbf{0}}
\newcommand{\one}{\mathbf{1}}
\renewcommand{\AA}{\mathcal{A}}
\newcommand{\BB}{\mathcal{B}}
\newcommand{\CC}{\mathscr{C}}
\newcommand{\DD}{\mathscr{D}}
\newcommand{\HH}{\mathscr{H}}
\newcommand{\LL}{\mathcal{L}}
\newcommand{\RR}{\mathcal{R}}
\newcommand{\SM}{\mathcal{S}}
\renewcommand{\SS}{\mathfrak{S}}
\newcommand{\rk}{\text{rk}}
\newcommand{\dref}{\leq_{\text{{\normalfont dref}}}}
\newcommand{\pchn}{\leq_{\text{{\normalfont pchn}}}}
\newcommand{\cat}{\text{Cat}}
\author{Henri M{\"u}hle}
\address{Institut f{\"u}r Algebra, Technische Universit{\"a}t Dresden, 01069 Dresden, Germany.}
\email{henri.muehle@tu-dresden.de}
\title{Two Posets of Noncrossing Partitions Coming From Undesired Parking Spaces}
\keywords{noncrossing partition, supersolvable lattice, left-modular lattice, parking function, lexicographic shellability, NBB base, M{\"o}bius function}
\subjclass[2010]{05A18, 06A07}
\begin{document}

\allowdisplaybreaks

\begin{abstract}
	Consider the noncrossing set partitions of an $n$-element set which either do not contain the block $\{n-1,n\}$, or which do not contain the singleton block $\{n\}$ whenever $1$ and $n-1$ are in the same block.  In this article we study the subposet of the noncrossing partition lattice induced by these elements, and show that it is a supersolvable lattice, and therefore lexicographically shellable.  We give a combinatorial model for the NBB bases of this lattice and derive an explicit formula for the value of its M{\"o}bius function between least and greatest element.  
	
	This work is motivated by a recent article by M.~Bruce, M.~Dougherty, M.~Hlava\-cek, R.~Kudo, and I.~Nicolas, in which they introduce a subposet of the noncrossing partition lattice that is determined by parking functions with certain forbidden entries.  In particular, they conjecture that the resulting poset always has a contractible order complex.  We prove this conjecture by embedding their poset into ours, and showing that it inherits the lexicographic shellability.
\end{abstract}

\maketitle

\section{Introduction}
	\label{sec:introduction}
A set partition of $[n]=\{1,2,\ldots,n\}$ is noncrossing if there are no indices $i<j<k<l$ such that $i,k$ and $j,l$ belong to distinct blocks.  Let us denote the set of all noncrossing set partitions by $\nc_{n}$.  We can partially order noncrossing set partitions by dual refinement, meaning that $\xx\in\nc_{n}$ is smaller than $\yy\in\nc_{n}$ if every block of $\xx$ is contained in a block of $\yy$.  Let us denote this partial order by $\dref$.  

The lattice $(\nc_{n},\dref)$ of noncrossing set partitions is a remarkable poset with a rich combinatorial structure.  It was introduced by G.~Kreweras in the early 1970s~\cite{kreweras72sur}, and has gained a lot of attention since then.  It has, among other things, surprising ties to group theory, algebraic topology, representation theory of the symmetric group, and free probability.  See \cite{simion00noncrossing} and \cite{mccammond06noncrossing} for surveys on these lattices.

A parking function of length $n$ is a function on an $n$-element set with the property that the preimage of $[k]$ has at least $k$ elements for every $k\leq n$.  They were introduced in \cite{konheim66occupancy}, and play an important role in the study of the spaces of diagonal harmonics, see~\cite{haiman94conjectures} and \cite{haglund08catalan}*{Chapter~5}.

The maximal chains of $(\nc_{n},\dref)$ are naturally in bijection with parking functions of length $n-1$, see \cite{stanley97parking}.  This connection was used in \cite{bruce16decomposition} to define a subposet of $(\nc_{n},\dref)$ as follows.  Fix some $k\leq n$ and take the set of all parking functions which do not have $k$ in their image, but every value larger than $k$.  Let us consider the poset $(\pe_{n,k},\pchn)$, which is the subposet of $(\nc_{n},\dref)$ determined by the maximal chains corresponding to these parking functions.  In the case where $n=k$ we simply write $(\pe_{n},\pchn)$.  For $n\leq 2$, the poset $(\pe_{n},\pchn)$ is the empty poset.

Let $\zero$ denote the discrete partition into singleton blocks, and let $\one$ denote the full partition into a single block.  It is the statement of \cite{bruce16decomposition}*{Theorem~C} that the M{\"o}bius function of $(\pe_{n,k},\pchn)$ always vanishes between $\zero$ and $\one$.  It was moreover conjectured there that the order complex of $(\pe_{n,k},\pchn)$ with $\zero$ and $\one$ removed is contractible.  The main purpose of this article is to prove this conjecture.  

In fact we show that $(\pe_{n},\pchn)$ is lexicographically shellable, which together with the aforementioned result on the M{\"o}bius function establishes the following.

\begin{theorem}\label{thm:pf_chain_shellable}
	For $n\geq 3$ the poset $(\pe_{n},\pchn)$ is lexicographically shellable.
\end{theorem}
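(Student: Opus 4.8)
The plan is to deduce Theorem~\ref{thm:pf_chain_shellable} from two facts, each proved separately: that $(\pe_{n},\pchn)$ sits inside the poset $\widehat{\nc}_{n}$ described in the abstract — the induced subposet of $(\nc_{n},\dref)$ obtained by discarding the noncrossing partitions with the indicated behaviour at the coordinates $1$, $n-1$, $n$ — and that $\widehat{\nc}_{n}$ is a supersolvable, hence lexicographically shellable, lattice. Both $\zero$ and $\one$ lie in $\widehat{\nc}_{n}$. The first step is to compare the two posets by means of Stanley's edge labeling of $(\nc_{n},\dref)$: one checks that the maximal chains of $\nc_{n}$ whose labels form a parking function with the relevant forbidden entry are precisely the maximal chains lying inside $\widehat{\nc}_{n}$, so that $(\pe_{n},\pchn)$ is the subposet of $\widehat{\nc}_{n}$ they span. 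I expect this to be all of $\widehat{\nc}_{n}$ when $n\geq 3$; in any case it should be a convex union of maximal chains, which is what makes lexicographic shellability descend in the last step.

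The core of the argument is to show that $\widehat{\nc}_{n}$ is a supersolvable lattice. One first checks that it is a graded lattice; the delicate point is that the meet and join of two elements of $\widehat{\nc}_{n}$, computed in $\nc_{n}$, may fail to avoid the discarded partitions and must be recomputed inside $\widehat{\nc}_{n}$, which affects only what happens at the coordinates $n-1$ and $n$. The main task is to exhibit an $M$-chain, i.e.\ a maximal chain $\zero=\mathsf{c}_{0}\lessdot\mathsf{c}_{1}\lessdot\cdots\lessdot\mathsf{c}_{n-1}=\one$ of \emph{left-modular} elements. The natural candidate is the chain that merges $1,2,3,\dots$ successively into a single growing initial block, modified near the top if necessary so that no $\mathsf{c}_{i}$ is a discarded partition. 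Left-modularity of each $\mathsf{c}_{i}$ is the identity $\xx\vee(\mathsf{c}_{i}\wedge\yy)=(\xx\vee\mathsf{c}_{i})\wedge\yy$ for all $\xx\dref\yy$ in $\widehat{\nc}_{n}$; this reduces to a case analysis on how the initial block of $\mathsf{c}_{i}$ meets the blocks of $\xx$ and of $\yy$, the only genuinely delicate cases being those that involve $n-1$ and $n$, where joins and meets in $\widehat{\nc}_{n}$ can differ from those in $\nc_{n}$. Once the $M$-chain is verified, $\widehat{\nc}_{n}$ is supersolvable by the left-modular characterisation of McNamara and Thomas, hence admits an EL labeling by Björner's theorem, and is in particular lexicographically shellable.

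Granting the two facts, Theorem~\ref{thm:pf_chain_shellable} follows. If $(\pe_{n},\pchn)\cong\widehat{\nc}_{n}$ this is immediate; otherwise one notes that the restriction of the EL labeling of $\widehat{\nc}_{n}$ to $(\pe_{n},\pchn)$ is again an EL labeling, since $(\pe_{n},\pchn)$, being a convex union of maximal chains of $\widehat{\nc}_{n}$, has the same intervals as $\widehat{\nc}_{n}$ and hence the same unique rising chains. I expect the principal obstacle to be the left-modularity verification in the second step — choosing the $M$-chain so that it stays inside $\widehat{\nc}_{n}$ and then pushing the modular-law identity through the exceptional behaviour at the coordinates $n-1$ and $n$, where $\widehat{\nc}_{n}$ is a proper subposet of $\nc_{n}$. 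A secondary difficulty is the first step: pinning down exactly which noncrossing partitions occur on the admissible parking-function chains and confirming that they form a subposet of $\widehat{\nc}_{n}$ well-placed enough for shellability to be inherited.
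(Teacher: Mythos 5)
Your first two steps track the paper's actual argument: the auxiliary poset $\widehat{\nc}_{n}$ you describe is exactly the induced subposet $(\pe_{n},\dref)$, the paper proves it is a graded lattice with the same kind of meet/join corrections at the coordinates $n-1$ and $n$ that you anticipate, and the $M$-chain is the chain $\xx_{1},\ldots,\xx_{n}$ with growing block $[i-1]\cup\{n\}$ (attaching $n$ rather than growing an initial segment $[i]$, which would land in the forbidden set $L_{2}$ at the top --- your ``modify near the top'' caveat is pointing at a real issue, and the cleaner fix is to carry $n$ along from the start). Supersolvability then follows from McNamara's graded-left-modular criterion and the EL-labeling from Liu's theorem, as you say.

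The genuine gap is in your last step. It is true that $\pe_{n}$ is the full ground set of $(\pe_{n},\pchn)$ for $n\geq 3$, but the two posets are \emph{not} isomorphic: $\pchn$ is a proper subset of $\dref$ for $n\geq 5$ (the paper notes $(\pe_{n},\pchn)$ is not even a lattice there), because the cover relations whose parking label equals $n-1$ are deleted. Consequently your fallback claim that $(\pe_{n},\pchn)$ ``has the same intervals as $\widehat{\nc}_{n}$ and hence the same unique rising chains'' is false, and restricting an EL-labeling to a poset obtained by deleting cover relations does not automatically yield an EL-labeling: an interval whose unique rising chain passes through a deleted edge loses its rising chain entirely. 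Closing this gap is the actual content of the paper's Section~4. One shows (i) a general lemma: deleting a cover relation $(x,y)$ preserves the EL-property provided $x$ has another upper cover $y'$ with $\lambda(x,y)\succ\lambda(x,y')$, since then the rising chain of any interval with bottom $x$ cannot begin with the deleted edge; and (ii) for every deleted cover $(\xx,\yy)$, i.e.\ one with $\pi(\xx,\yy)=n-1$, the partition $\yy'$ obtained from $\xx$ by adjoining $n$ to the block of $1$ is a cover of $\xx$ in $(\pe_{n},\pchn)$ with left-modular label $\lambda(\xx,\yy')=1<\lambda(\xx,\yy)$. Without an argument of this kind your deduction of Theorem~\ref{thm:pf_chain_shellable} from the supersolvability of $(\pe_{n},\dref)$ does not go through.
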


The following is an immediate corollary of Theorem~\ref{thm:pf_chain_shellable} and \cite{bruce16decomposition}*{Theorem~C}.

\begin{corollary}\label{cor:pf_chain_contractible}
	For $n\geq 3$ the order complex of $(\pe_{n},\pchn)$ with $\zero$ and $\one$ removed is contractible.
\end{corollary}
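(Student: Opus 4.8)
The corollary follows formally once one has Theorem~\ref{thm:pf_chain_shellable} together with \cite{bruce16decomposition}*{Theorem~C}, so the plan is simply to assemble these two inputs in the standard way. First I would recall that a finite bounded poset which is lexicographically shellable has an order complex (of the proper part, i.e.\ with $\zero$ and $\one$ deleted) that is homotopy equivalent to a wedge of spheres, the number of spheres in dimension $d$ being the number of falling maximal chains of length $d+2$ under the given edge labeling. In particular, if the reduced Euler characteristic of this complex vanishes --- equivalently, if the M\"obius number $\mu_{(\pe_{n},\pchn)}(\zero,\one)$ is zero --- then the wedge is empty, and a nonempty wedge of spheres is contractible if and only if it has no spheres. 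Hence shellability plus a vanishing M\"obius number forces contractibility.

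Concretely, I would argue as follows. By Theorem~\ref{thm:pf_chain_shellable}, for $n\geq 3$ the poset $(\pe_{n},\pchn)$ is lexicographically shellable, hence its proper part $(\pe_{n},\pchn)\setminus\{\zero,\one\}$ has a shellable order complex $\Delta$. A shellable complex is homotopy equivalent to a wedge of spheres, one of top dimension for each facet whose intersection with the union of the earlier facets is the entire boundary of that facet; equivalently, $\Delta$ has the homotopy type of a wedge of $\lvert\widetilde{\mu}\rvert$ spheres where, by Philip Hall's theorem, $\widetilde{\mu}$ equals the reduced Euler characteristic of $\Delta$, which in turn equals $\mu_{(\pe_{n},\pchn)}(\zero,\one)$. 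By \cite{bruce16decomposition}*{Theorem~C}, this M\"obius number is $0$. A wedge of zero spheres is a point, so $\Delta$ is contractible. (One should note the case $n=3$, where $\pe_{3}$ has a one-element proper part and $\Delta$ is already a point; this is consistent with the general argument.)

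I do not anticipate a genuine obstacle here, since the corollary is a routine consequence: the only substantive work is Theorem~\ref{thm:pf_chain_shellable} itself, which the excerpt has already asserted. The one point requiring a little care is the logical direction --- shellability alone does not give contractibility, nor does a vanishing M\"obius number alone; it is precisely their conjunction (shellable $\Rightarrow$ wedge of spheres, vanishing $\widetilde{\mu}$ $\Rightarrow$ no spheres) that yields the result. I would therefore state the proof in one or two sentences, citing the standard fact that a shellable complex is a wedge of spheres counted by the reduced Euler characteristic (e.g.\ Bj\"orner's work on shellability), and concluding from \cite{bruce16decomposition}*{Theorem~C} that the wedge is trivial.
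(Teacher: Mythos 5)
Your argument is correct and coincides with the paper's own proof: EL-shellability (Theorem~\ref{thm:pf_chain_shellable}) gives via Theorem~\ref{thm:shellable_wedge} that the proper part of $(\pe_{n},\pchn)$ is a wedge of $\lvert\mu(\zero,\one)\rvert$ spheres, and Theorem~\ref{thm:pf_chain_mobius} makes that wedge empty, hence a point. The extra remarks on Philip Hall's theorem and the $n=3$ case are harmless but not needed.
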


Theorem~3.5 in \cite{bruce16decomposition} states that $(\pe_{n,k},\pchn)$ is isomorphic to the direct product of $(\pe_{k},\pchn)$ and the Boolean lattice of rank $n-k$.  Since the latter is known to be lexicographically shellable \cite{bjorner80shellable}*{Theorem~3.7}, and lexicographic shellability is preserved under taking direct products \cite{bjorner80shellable}*{Theorem~4.3}, Theorem~\ref{thm:pf_chain_shellable} indeed suffices to resolve the main conjecture of \cite{bruce16decomposition}.  

In order to prove Theorem~\ref{thm:pf_chain_shellable}, we take a detour through a slightly larger subposet of $(\nc_{n},\dref)$.  In fact, we consider the induced subposet $(\pe_{n},\dref)$, and show that it is a supersolvable lattice.  

\begin{theorem}\label{thm:pf_element_supersolvable}
	For $n\geq 3$ the poset $(\pe_{n},\dref)$ is a supersolvable lattice.
\end{theorem}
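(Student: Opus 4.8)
First I would set up an explicit description of the set $\pe_n$ inside $\nc_n$. From the abstract we know $\pe_n$ consists of those noncrossing partitions $\xx$ of $[n]$ such that (i) $\{n-1,n\}$ is not a block of $\xx$, and (ii) if $1$ and $n-1$ lie in a common block of $\xx$, then $\{n\}$ is not a singleton block of $\xx$. So the plan is: (1) verify that $(\pe_n,\dref)$ is a lattice, i.e.\ that it is closed under the meet and join of $\nc_n$ (or at least that it has a $\zero$, a $\one$, and is closed under meets, whence it is a lattice as a finite meet-semilattice with $\one$); (2) exhibit a maximal chain $\zero = x_0 \dref x_1 \dref \cdots \dref x_{n-1} = \one$ in $\pe_n$ consisting of \emph{left-modular} elements and verify it is a \emph{chief chain}, so that $\pe_n$ is supersolvable by the characterization of supersolvable lattices as those admitting a maximal chain of left-modular elements (equivalently, a maximal chain $C$ such that for every chain $C'$, the sublattice generated by $C\cup C'$ is distributive).

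\smallskip
\noindent\textbf{Step 1: Lattice structure.} I would check closure under the $\nc_n$-meet and $\nc_n$-join. For the meet: given $\xx,\yy\in\pe_n$, the meet $\xx\wedge\yy$ in $\nc_n$ is the common refinement. Condition (i) is easy: if $\{n-1,n\}$ were a block of $\xx\wedge\yy$, then $n-1$ and $n$ would be together in both $\xx$ and $\yy$, and since both are noncrossing this forces $\{n-1,n\}$ to be a block of both — contradicting (i) for $\xx$. Condition (ii) requires a short case analysis: if $1,n-1$ are in a common block of $\xx\wedge\yy$, they are together in both $\xx$ and $\yy$, so by (ii) neither $\xx$ nor $\yy$ has $\{n\}$ as a singleton; I then argue that $n$ cannot become isolated in the common refinement either (the block of $n$ in $\xx\wedge\yy$ is the intersection of the $\xx$-block and the $\yy$-block of $n$, both of which contain an element $\neq n$; noncrossingness with the block containing $1$ and $n-1$ restricts which elements these can be, and one checks the intersection is still nontrivial). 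The join is handled dually, or — more cleanly — I would just establish that $\pe_n$ is a meet-subsemilattice of $\nc_n$ containing $\one$, which already makes it a lattice.

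\smallskip
\noindent\textbf{Step 2: The chief chain.} The natural candidate is the chain that successively merges $n$ with $n-1, n-2, \ldots, 1$ — but that is illegal at the first step because $\{n-1,n\}$ is forbidden. So instead I would try the chain $\zero = x_0 \dref \cdots \dref x_{n-1}=\one$ where $x_1$ merges $1$ and $n$ (or, after relabeling, merges the "first" and "last" elements into one block, keeping everything noncrossing), and then $x_i$ for $i\ge 2$ grows that block by absorbing $2,3,\ldots$ in order; the very last merge absorbs $n-1$. Each $x_i$ has a single nontrivial block which is an "interval" in the cyclic order $1,2,\ldots,n$, chosen so that $\{n-1,n\}$ never appears as the whole nontrivial block and so that whenever $1$ and $n-1$ get joined, $n$ has already been absorbed (so $\{n\}$ is not a singleton). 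I would then verify each $x_i\in\pe_n$ and that each $x_i$ is left-modular in $\pe_n$: concretely, show that for all $\yy,\zz\in\pe_n$ with $\yy\dref\zz$ one has $(\yy\vee x_i)\wedge\zz = \yy\vee(x_i\wedge\zz)$. Because $x_i$ is a "near-minimal" element (one nontrivial interval-block), I expect this to reduce to a transparent statement about how joining/refining interacts with a single interval of consecutive integers in the cyclic order, mirroring the well-known fact that $(\nc_n,\dref)$ itself is supersolvable with such a chief chain.

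\smallskip
\noindent\textbf{Main obstacle.} The genuine difficulty is not Step 1 but Step 2: checking left-modularity of each $x_i$ \emph{relative to the restricted lattice $\pe_n$} rather than relative to $\nc_n$. In $\nc_n$ the analogous chain is classical and left-modularity is standard, but $\pe_n$ is an \emph{induced} subposet whose meet and join, while computed inside $\nc_n$, may differ in edge cases if $\pe_n$ is not a sublattice — so I must either prove $\pe_n$ is a genuine sublattice (ideal outcome, reducing everything to the $\nc_n$ computation restricted to a sublattice) or else track the places where the $\pe_n$-join deviates from the $\nc_n$-join and check left-modularity there by hand. The delicate interaction is precisely between the two defining conditions: joining blocks can simultaneously threaten to create $\{n-1,n\}$ and to strand $\{n\}$, and the constraint "$\{n\}$ not a singleton when $1\sim n-1$" is a conditional constraint, which is exactly the kind of non-order-ideal condition that can break sublattice-ness. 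So I would spend most of the effort pinning down the join in $\pe_n$ explicitly — showing that when the naive $\nc_n$-join falls out of $\pe_n$, there is a well-defined smallest element of $\pe_n$ above it — and only then verifying the left-modular identities along the chosen chain.
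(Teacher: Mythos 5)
Your overall skeleton matches the paper's: establish that $(\pe_{n},\dref)$ is a lattice, exhibit a left-modular maximal chain, and conclude supersolvability; and the chain you guess in Step 2 is exactly the one used (the elements $\xx_{i}$ with unique non-singleton block $[i-1]\cup\{n\}$). However, Step 1 contains a concrete error that would derail the plan. You claim that if $\{n-1,n\}$ were a block of $\xx\wedge\yy$ then noncrossingness would force $\{n-1,n\}$ to be a block of both $\xx$ and $\yy$. This is false: the blocks of $\xx$ and $\yy$ containing $n-1$ and $n$ may be strictly larger and intersect in exactly $\{n-1,n\}$. For $n=5$, take $\xx=\bigl\{\{1\},\{2\},\{3,4,5\}\bigr\}$ and $\yy=\bigl\{\{1,4,5\},\{2\},\{3\}\bigr\}$; both lie in $\pe_{5}$, yet their common refinement is $\bigl\{\{1\},\{2\},\{3\},\{4,5\}\bigr\}\notin\pe_{5}$. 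So $\pe_{n}$ is \emph{not} a meet-subsemilattice of $\nc_{n}$, and your ``cleaner'' fallback (meet-closure plus a top element) is unavailable. The join fails too: $\bigl\{\{1,2\},\{3\},\{4\},\{5\}\bigr\}$ and $\bigl\{\{1\},\{2,4\},\{3\},\{5\}\bigr\}$ have $\nc$-join $\bigl\{\{1,2,4\},\{3\},\{5\}\bigr\}\notin\pe_{5}$. The actual proof must therefore \emph{modify} the inherited operations (split an offending $\{n-1,n\}$ block into singletons for the meet; absorb a stranded $\{n\}$ into the block of $1$ for the join) and verify the universal properties by hand — and, interestingly, the case distribution is the opposite of what you expect: for the meet it is condition (i) that genuinely fails and condition (ii) that is impossible (it forces a crossing), while for the join it is the reverse.

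A second, smaller gap: you pass from ``maximal chain of left-modular elements'' to ``supersolvable'' without comment, but that equivalence (McNamara–Thomas) requires the lattice to be \emph{graded}, and gradedness of $(\pe_{n},\dref)$ is not automatic for an induced subposet — the paper proves it separately by showing that a cover relation in $(\pe_{n},\dref)$ cannot skip a rank of $(\nc_{n},\dref)$. Finally, your Step 2 correctly flags the real issue (left-modularity must be checked against the \emph{modified} meet and join), but the resolution — showing that for each $\xx_{i}$ and each $\yy\in\pe_{n}$ the $\pe$- and $\nc$-operations actually agree, so that left-modularity transfers from $\nc_{n}$ — is left entirely to future work; as a plan it points in the right direction, but the load-bearing lemmas are missing and the one argument you do supply in detail is incorrect.
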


It is well known that supersolvable lattices possess an edge-labeling that implies their lexicographic shellability~\cite{bjorner80shellable}*{Theorem~3.7}.  The last step in proving Theorem~\ref{thm:pf_chain_shellable} is to show that the restriction of this edge-labeling to $(\pe_{n},\pchn)$ retains its crucial properties.  Observe that for $n\geq 5$, the poset $(\pe_{n},\pchn)$ is not a lattice.

We remark that the edge-labeling coming from Theorem~\ref{thm:pf_element_supersolvable} differs from the usual labeling of $(\nc_{n},\dref)$, which is defined as follows.  If $\xx\lessdot_{\text{\normalfont dref}}\yy$, then there are two blocks $B,B'$ in $\xx$ that are joined in $\yy$.  If the smallest element of $B$ is smaller than the smallest element of $B'$, then the label of this cover relation is $n$ minus the largest element of $B$ that is smaller than every element in $B'$.  The restriction of this labeling to $(\pe_{n},\pchn)$ does, however, not have the properties necessary to guarantee lexicographic shellability.

The last main result of this article is the explicit computation of the value of the M{\"o}bius function in $(\pe_{n},\dref)$ between $\zero$ and $\one$.

\begin{theorem}\label{thm:pf_element_mobius}
	For $n\geq 3$ we have
	\begin{displaymath}
		\mu_{(\pe_{n},\dref)}(\zero,\one)=(-1)^{n-1}\frac{4}{n}\binom{2n-5}{n-4},
	\end{displaymath}
	which is \cite{sloane}*{A099376} up to sign.
\end{theorem}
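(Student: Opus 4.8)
The plan is to compute $\mu_{(\pe_n,\dref)}(\zero,\one)$ by combining the supersolvability of $(\pe_n,\dref)$ from Theorem~\ref{thm:pf_element_supersolvable} with the NBB base method of Blass and Sagan for Möbius functions of lattices. The edge-labeling witnessing supersolvability determines a partial order $\preceq$ on the set $\AA$ of atoms of $(\pe_n,\dref)$, and for this order the NBB base theorem gives
\[
  \mu_{(\pe_n,\dref)}(\zero,\one)=\sum_{B}(-1)^{\lvert B\rvert},
\]
the sum being over all NBB bases $B$ of $\one$, i.e.\ over the sets $B\subseteq\AA$ with $\bigvee B=\one$ that contain no nonempty subset $D$ for which some atom $a\in\AA$ satisfies $a\prec d$ for every $d\in D$ and $a<\bigvee D$. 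So the first step is to make $\AA$ and $\preceq$ explicit, reading them off the modular maximal chain constructed in the proof of Theorem~\ref{thm:pf_element_supersolvable}.

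The combinatorial heart of the argument is then to translate ``$B$ is an NBB base of $\one$'' into a condition on collections of arcs $\{i,j\}$ on $[n]$: the requirement $\bigvee B=\one$ imposes a connectivity-type condition on the arc graph of $B$ (modulo the blocks that are not allowed to occur in $\pe_n$), while the NBB condition, unwound through $\preceq$, forbids certain short ``reducible'' configurations of arcs. I expect this to produce a transparent combinatorial model: a distinguished class of connected noncrossing arc diagrams on $[n]$, obtained from the spanning noncrossing forests of $\nc_n$ by imposing local constraints near the vertices $1$, $n-1$, $n$ at which $\pe_n$ departs from $\nc_n$, and using the nonstandard labeling in place of the usual one. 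From the model one then reads off what is needed to evaluate the alternating sum; in particular I would determine whether all NBB bases of $\one$ have the same cardinality --- in which case the sum collapses to $(-1)^{n-1}$ times their number, since $(\pe_n,\dref)$ has rank $n-1$ --- and in any event the overall sign should come out as $(-1)^{n-1}$, matching the statement.

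Finally comes the enumeration: evaluate $\sum_B(-1)^{\lvert B\rvert}$ over the model and identify the result with $\tfrac4n\binom{2n-5}{n-4}$. Rewriting this quantity as $\tfrac{n-3}{2n-3}\,C_{n-1}$, with $C_{n-1}$ the Catalan number, strongly suggests that the answer is a natural ``slice'' of a Catalan family, so I would first look for a bijection between the surviving terms and a standard object enumerated by A099376 (polygon dissections, or pairs of almost-noncrossing lattice paths, say); absent a clean bijection, one can instead decompose an NBB base along the arc incident to a distinguished vertex of the modular chain, derive a polynomial recursion in $n$, and solve it. Two consistency checks to keep in sight: for $n=3$ the poset $(\pe_3,\dref)$ is a three-element chain in which $\one$ is not a join of atoms, so there is no NBB base of $\one$ and $\mu=0=\binom1{-1}$; and $n=4$ must reproduce $\mu=-1=-\tfrac44\binom30$. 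The step I expect to be the main obstacle is pinning down the combinatorial model in the second paragraph: the relevant atom order $\preceq$ is the nonstandard one supplied by Theorem~\ref{thm:pf_element_supersolvable} (the usual edge-labeling of $(\nc_n,\dref)$ provably fails here), so the ``bounded below'' relation among arcs is governed by a modified nesting/crossing rule that must be described with care, and the atoms incident to the special vertices $1,n-1,n$ require separate bookkeeping; once the model --- and in particular the size distribution of the NBB bases of $\one$ --- is in hand, both the reduction of the alternating sum and its simplification to A099376 should be routine.
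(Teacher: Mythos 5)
Your plan follows the paper's proof essentially step for step: the paper computes $\mu$ via Blass--Sagan NBB bases with respect to the atom order induced by the left-modular chain $\xx_{1}\lessdot\cdots\lessdot\xx_{n}$, shows every NBB base of $\one$ has cardinality $n-1$ and corresponds to a noncrossing tree on $[n]$ containing the edge $\{1,n\}$, and then counts the surviving trees by complementary enumeration, arriving at $\cat(n-1)-2\cat(n-2)=\tfrac{4}{n}\tbinom{2n-5}{n-4}$. The one point your sketch leaves open --- the precise ``local constraints near $1$, $n-1$, $n$'' --- resolves to three conditions: the tree must avoid the edges $\{1,n-1\}$ and $\{n-1,n\}$, and the vertex $n$ must have a neighbour other than $1$ (the last because otherwise the base with $\aa_{1,n}$ removed already joins to $\one$ in $(\pe_{n},\dref)$ and is hence bounded below), which is exactly the bookkeeping you flagged as the main obstacle.
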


We prove Theorem~\ref{thm:pf_element_mobius} by using A.~Blass and B.~Sagan's NBB bases~\cite{blass97mobius}.  In fact we give a combinatorial model in terms of trees for these NBB bases, from which we derive their enumeration.

\medskip

The rest of the article is organized as follows.  In Section~\ref{sec:preliminaries} we recall the necessary lattice- and poset-theoretic notions (Section~\ref{sec:posets_lattices}), and formally define noncrossing set partitions (Section~\ref{sec:noncrossing_set_partitions}).  In Section~\ref{sec:pf_element_poset} we define the poset $(\pe_{n},\dref)$, and prove Theorem~\ref{thm:pf_element_supersolvable} (Section~\ref{sec:pf_element_structure}), and Theorem~\ref{thm:pf_element_mobius} (Section~\ref{sec:pf_element_mobius}).  In Section~\ref{sec:pf_chain_poset} we turn our attention to the poset $(\pe_{n},\pchn)$ and conclude the proof of Theorem~\ref{thm:pf_chain_shellable}. 

\section{Preliminaries}
	\label{sec:preliminaries}
\subsection{Posets and Lattices}
	\label{sec:posets_lattices}
Let $\LL=(L,\leq)$ be a finite partially ordered set (\defn{poset} for short).  If $\LL$ has a least and a greatest element (denoted by $\hat{0}$ and $\hat{1}$, respectively), then $\LL$ is \defn{bounded}.  If any two elements $x,y\in L$ have a least upper bound (their \defn{join}; denoted by $x\vee y$) and a greatest lower bound (their \defn{meet}; denoted by $x\wedge y$), then $\LL$ is a \defn{lattice}.  

An element $y\in L$ \defn{covers} another element $x\in L$ if $x<y$ and for all $z\in L$ with $x\leq z\leq y$ we have $x=z$ or $z=y$.  We then write $x\lessdot y$, and we sometimes say that $(x,y)$ is a \defn{cover relation}.  If $\LL$ has a least element $\hat{0}$, then any element covering $\hat{0}$ is an \defn{atom}.

A \defn{chain} is a subset $X\subseteq L$ that can be written as $C=\{x_{1},x_{2},\ldots,x_{k}\}$ such that $x_{1}\leq x_{2}\leq\cdots\leq x_{k}$.  A chain is \defn{saturated} if it can be written as $x_{1}\lessdot x_{2}\lessdot\cdots\lessdot x_{k}$.  A saturated chain is \defn{maximal} if it contains a minimal and a maximal element of $\LL$.  Let $\CC(\LL)$ denote the set of maximal chains of $\LL$.

The \defn{rank} of $\LL$ is one less than the maximum size of a maximal chain; denoted by $\rk(\LL)$.  We say that $\LL$ is \defn{graded} if all maximal chains have the same size.  An \defn{interval} of $\LL$ is a set $[x,y]=\{z\mid x\leq z\leq y\}$.  

Two lattice elements $x,z\in L$ form a \defn{modular pair} if for all $y\leq z$ holds that $(y\vee x)\wedge z=y\vee(x\wedge z)$; we then usually write $xMz$.  Moreover, $x\in L$ is \defn{left-modular} if $xMz$ for all $z\in L$.  If $x$ satisfies both $xMz$ and $zMx$ for all $z\in L$, then $x$ is \defn{modular}.  A maximal chain is \defn{(left-)modular} if it consists entirely of (left-)modular elements.

A lattice is \defn{modular} if all its elements are modular, and it is \defn{left-modular} if it contains a left-modular chain.  A lattice is \defn{supersolvable} if it contains a maximal chain $M$ with the property that for every chain $C$ the sublattice generated by $M$ and $C$ is distributive.  (In other words, the smallest sublattice containing $M$ and $C$ is distributive.)  Chains with this property are called \defn{$M$-chains}.  It follows from \cite{stanley72supersolvable}*{Proposition~2.1} that every element of an $M$-chain is modular, and supersolvable lattices are therefore left-modular.  For graded lattices, these two notions actually coincide.

\begin{theorem}[\cite{mcnamara06poset}*{Theorem~2}]\label{thm:graded_left_modular}
	A finite graded lattice is left-modular if and only if it is supersolvable.
\end{theorem}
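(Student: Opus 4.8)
The plan is to prove the two implications separately. That a supersolvable lattice is left-modular needs no gradedness and is exactly the observation recorded just before the statement: by \cite{stanley72supersolvable}*{Proposition~2.1} every element of an $M$-chain is modular, hence left-modular, so any $M$-chain witnesses left-modularity. The substance is therefore the converse. Let $\LL=(L,\leq)$ be a finite graded lattice of rank $n$, and let $M=(\hat{0}=m_{0}\lessdot m_{1}\lessdot\cdots\lessdot m_{n}=\hat{1})$ be a left-modular maximal chain; the goal is to show that $M$ is an $M$-chain, i.e., that the sublattice of $\LL$ generated by $M\cup C$ is distributive for every chain $C$.

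First I would attach to $M$ its canonical edge labeling $\lambda$, defined on a cover relation $x\lessdot y$ by $\lambda(x\lessdot y)=\min\{i : x\vee m_{i}\geq y\}$.  This is well defined with values in $\{1,2,\ldots,n\}$ since $x\vee m_{0}=x\not\geq y$ while $x\vee m_{n}=\hat{1}\geq y$; moreover, because $m_{i}$ is left-modular and the interval $[x,y]$ equals $\{x,y\}$, one has the convenient reformulation $x\vee m_{i}\geq y\iff m_{i}\wedge y\not\leq x$.  The heart of the matter is the following structural lemma, whose proof is precisely where left-modularity of the entire chain $M$ is exploited: along every saturated chain $\hat{0}=z_{0}\lessdot z_{1}\lessdot\cdots\lessdot z_{r}=x$ the labels $\lambda(z_{t-1}\lessdot z_{t})$ are pairwise distinct, the resulting label set $S(x)=\{\lambda(z_{t-1}\lessdot z_{t}) : 1\leq t\leq r\}$ depends only on $x$ and not on the chosen chain, and
\begin{align*}
	S(\hat{0}) &= \varnothing, & S(m_{i}) &= \{1,\ldots,i\}, \\
	S(x\vee m_{i}) &= S(x)\cup\{1,\ldots,i\}, & S(x\wedge m_{i}) &= S(x)\cap\{1,\ldots,i\}.
\end{align*}
Since $\LL$ is graded, $|S(x)|=\rk(x)$, so $x\mapsto S(x)$ is a rank-preserving map from $L$ into the Boolean lattice $2^{[n]}$.

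To finish, fix a chain $C$ and let $N$ be the sublattice of $\LL$ generated by $M\cup C$.  Every element of $N$ arises from the generators by iterated meets and joins, and repeatedly invoking left-modularity of the $m_{i}$ together with $m_{i}\vee m_{j}=m_{\max(i,j)}$ and $m_{i}\wedge m_{j}=m_{\min(i,j)}$ collapses such expressions to a normal form from which one reads off that the restriction of $S$ to $N$ is injective and preserves meets and joins.  Hence $S$ maps $N$ isomorphically onto a sublattice of $2^{[n]}$, so $N$ is distributive; thus $M$ is an $M$-chain and $\LL$ is supersolvable.

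I expect the structural lemma on the label sets $S(x)$ to be the main obstacle: proving that $S(x)$ is independent of the chosen saturated chain and that it transforms as displayed under meets and joins with the $m_{i}$ is exactly the point at which left-modularity of $M$ is used, and it is also where gradedness is indispensable, since gradedness is what forces $|S(x)|=\rk(x)$ and hence the rank-preservation of the embedding into $2^{[n]}$---without gradedness the label sets along different saturated chains to the same element need not even have equal cardinality, and the reverse implication genuinely fails.
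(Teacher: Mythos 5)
The paper does not prove this statement at all---it is imported wholesale from McNamara--Thomas, so there is no internal proof to compare against; your proposal has to be judged on its own. The easy direction, the definition of the labeling $\lambda$, and the reformulation $x\vee m_{i}\geq y\iff m_{i}\wedge y\not\leq x$ via left-modularity are all correct, and the ``structural lemma'' you isolate (well-definedness of the label set $S(x)$ and its behaviour under $\vee m_{i}$ and $\wedge m_{i}$) is a true statement whose proof is essentially the content of Liu's and McNamara--Thomas's analysis of left-modular labelings. So the architecture---embed the sublattice generated by $M\cup C$ into a Boolean lattice via $x\mapsto S(x)$---is the right one and matches the literature.

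The genuine gap is in the final paragraph. Your lemma only controls $S(x\vee m_{i})$ and $S(x\wedge m_{i})$, i.e.\ joins and meets with elements \emph{of $M$}; to conclude that $S$ restricted to the sublattice $N$ generated by $M\cup C$ is injective and preserves $\vee$ and $\wedge$, you must control $S(u\vee v)$ and $S(u\wedge v)$ for arbitrary $u,v\in N$, and the appeal to an unspecified ``normal form'' does not do this. Worse, nowhere in your sketch is the hypothesis that $C$ is a \emph{chain} actually used, yet it is indispensable: in the graded left-modular lattice $\Pi_{3}$ with $M$ the maximal chain through the atom $\aa_{1,3}$, one has $S(\aa_{1,2})=S(\aa_{2,3})=\{2\}$, and the sublattice generated by $M$ together with the antichain $\{\aa_{1,2},\aa_{2,3}\}$ is all of $\Pi_{3}$, which is not distributive---so $S$ is neither injective nor a lattice map there. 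Any correct completion must therefore exploit comparability of the elements of $C$, e.g.\ by an induction establishing a genuine canonical form for elements of $N$ (roughly, joins of terms $c_{j}\wedge m_{i}$ over a chain $c_{1}<\cdots<c_{k}$) together with the multiplicativity of $S$ on such expressions, or by taking the established detour: prove that $\lambda$ is an $\SS_{n}$ EL-labeling (Liu) and then invoke McNamara's theorem that a graded $\SS_{n}$ EL-shellable lattice is supersolvable. As written, the concluding step is an assertion of the theorem rather than a proof of it.
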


For any bounded poset $\LL=(L,\leq)$ let $\HH(\LL)=\bigl\{(x,y)\mid x\lessdot y\bigr\}$ denote the set of cover relations of $\LL$.  An \defn{edge-labeling} of $\LL$ is a map $\lambda:\HH(\LL)\to\Lambda$, for some poset $(\Lambda,\prec)$.  For a saturated chain $C=\{x_{1},x_{2},\ldots,x_{k}\}$ we denote by $\lambda(C)=\bigl(\lambda(x_{1},x_{2}),\lambda(x_{2},x_{3}),\ldots,\lambda(x_{k-1},x_{k})\bigr)$ the associated sequence of edge-labels.  We then say that $C$ is \defn{rising} if $\lambda(C)$ is strictly increasing with respect to $\prec$.  An edge-labeling of $\LL$ is an \defn{EL-labeling} if the following two conditions hold for every interval $[x,y]$ of $\LL$: (i) there exists a unique rising maximal chain $C$ in $[x,y]$, and (ii) for every other maximal chain $C'$ of $[x,y]$ we have that $\lambda(C)$ is lexicographically smaller than $\lambda(C')$.  A poset that admits an EL-labeling is \defn{EL-shellable}.

If $\rk(\LL)=n$, and $\lambda$ is an EL-labeling of $\LL$ such that for every maximal chain $C$ the entries in $\lambda(C)$ are all distinct members of $[n]$, then $\lambda$ is an \defn{$\SS_{n}$ EL-labeling}.

\begin{theorem}[\cite{liu99left}]\label{thm:left_modular_snelling}
	Let $\LL=(L,\leq)$ be a left-modular lattice of length $n$ with left-modular chain $x_{0}\lessdot x_{1}\lessdot\cdots\lessdot x_{n}$.  The labeling 
	\begin{equation}\label{eq:left_modular_labeling}
		\lambda(y,z)=\min\{i\mid y\vee x_{i}\wedge z=z\}
	\end{equation}
	is an $\SS_{n}$ EL-labeling of $\LL$.
\end{theorem}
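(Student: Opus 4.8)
The plan is to verify that $\lambda$ from~\eqref{eq:left_modular_labeling} is an EL-labeling and, moreover, that the labels along each maximal chain are pairwise distinct elements of $[n]$; write $x_{0}\lessdot x_{1}\lessdot\cdots\lessdot x_{n}$ for the given left-modular chain.

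First I would put $\lambda$ into a more convenient form. For a cover relation $y\lessdot z$ and any index $i$, left-modularity of $x_{i}$ applied to the pair $y\leq z$ gives $y\vee(x_{i}\wedge z)=(y\vee x_{i})\wedge z$, so that
\begin{equation*}
	\lambda(y,z)=\min\{i\mid y\vee x_{i}\geq z\}.
\end{equation*}
Since $x_{n}=\hat{1}$ this set contains $n$, and since $x_{0}=\hat{0}$ while $y<z$ it omits $0$; hence $\lambda$ is a well-defined map from $\HH(\LL)$ to $[n]$.

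The structural core is the compatibility of $\lambda$ with passage to intervals. Here I would invoke the standard fact (from the cited references) that each $x_{i}$ behaves tamely under joins and meets: for every cover relation $u\lessdot v$, the elements $x_{i}\vee u$ and $x_{i}\vee v$ are either equal or again a cover, and likewise $x_{i}\wedge u$ and $x_{i}\wedge v$. Using this one checks that, for an interval $[y,z]$, the \emph{trace} of the chain --- the weakly increasing sequence running from $y$ to $z$ whose $i$-th term is $(y\vee x_{i})\wedge z$ --- becomes, after deleting repetitions, a maximal chain of $[y,z]$ made of elements that are left-modular in $[y,z]$; moreover a short manipulation of the modularity identities shows that $u\vee(x_{i}\wedge v)=u\vee\bigl((y\vee x_{i})\wedge z\wedge v\bigr)$ for $u,v\in[y,z]$, so that the restriction of $\lambda$ to $[y,z]$ is exactly the labeling of the form~\eqref{eq:left_modular_labeling} attached to this trace. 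Thus every interval of $\LL$ is again a left-modular lattice carrying a labeling of the same type, which is what makes an induction on rank go through. I expect this compatibility statement to be the main obstacle: it requires assembling the right chain of left-modular identities and ruling out that a deletion collapses a cover relation.

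Granting it, the remainder is an induction on $\rk[y,z]$. Along the chain itself one computes directly that $\lambda(x_{i-1},x_{i})=i$, so its label sequence is strictly increasing; by the previous paragraph the trace is therefore a rising maximal chain of every interval $[y,z]$. To see that it is the \emph{only} rising chain and is lexicographically least, I would show that the least label occurring on $[y,z]$ is attained at the bottom step $y\lessdot\hat{a}$ of the trace, that $\hat{a}$ is the unique atom of $[y,z]$ attaining it, and that this least label occurs --- exactly once --- on every maximal chain of $[y,z]$. Deleting the edge bearing that label from a maximal chain splits it into maximal chains of two strictly smaller intervals, and applying the inductive hypothesis there yields at once that the labels along any maximal chain of $[y,z]$ are distinct, that the rising chain of $[y,z]$ is unique (it must begin at $\hat{a}$ and then continue as the unique rising chain of $[\hat{a},z]$), and that the trace is lexicographically least. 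Specializing to $[y,z]=[\hat{0},\hat{1}]$ and recalling that $\lambda$ is $[n]$-valued, this shows that $\lambda$ is an $\SS_{n}$ EL-labeling.
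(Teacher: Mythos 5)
The paper does not prove this statement: it is imported verbatim from \cite{liu99left} (see also \cite{mcnamara06poset}), so there is no in-paper argument to compare yours against. Judged on its own terms, your outline follows the standard route from that literature --- rewrite $\lambda(y,z)$ as $\min\{i\mid y\vee x_{i}\geq z\}$ via left-modularity, check $\lambda(x_{i-1},x_{i})=i$, and induct over intervals using the trace $\bigl((y\vee x_{i})\wedge z\bigr)_{i}$ --- and the individual claims you make are true.

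However, as written this is a plan rather than a proof, and the two load-bearing steps are exactly the ones you assert without argument. First, the claim that the trace of the left-modular chain on an interval $[y,z]$ is, after deleting repetitions, a maximal chain of elements left-modular \emph{in} $[y,z]$ inducing the restricted labeling; you flag this yourself as ``the main obstacle'' and then proceed as if it were settled. Second, the claim that the least label on $[y,z]$ is attained at the bottom edge of the trace, at a unique atom, and occurs exactly once on \emph{every} maximal chain of $[y,z]$: this is where the distinctness of labels and the uniqueness of the rising chain actually come from, and it requires a genuine computation with the modular identities (for instance, showing that for any maximal chain $y=u_{0}\lessdot\cdots\lessdot u_{k}=z$ the number of edges with label at most $i$ equals $\rk\bigl((y\vee x_{i})\wedge z\bigr)-\rk(y)$, independently of the chain). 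Until both lemmas are supplied, the induction has nothing to stand on. Since the paper itself treats the theorem as a black box, the efficient course is either to do likewise and cite \cite{liu99left}, or to write out those two lemmas in full.
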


\begin{theorem}[\cite{mcnamara03labelings}*{Theorem~1}]\label{thm:supersolvable_snellable}
	A finite graded lattice of length $n$ is supersolvable if and only if it is $\SS_{n}$ EL-shellable.
\end{theorem}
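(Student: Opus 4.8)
The plan is to establish the two implications separately, using Theorems~\ref{thm:graded_left_modular} and~\ref{thm:left_modular_snelling} as the principal tools.

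For the direction ``supersolvable $\Rightarrow$ $\SS_{n}$ EL-shellable'', let $\LL$ be a finite graded lattice of length $n$ that is supersolvable and fix an $M$-chain $\hat{0}=m_{0}\lessdot m_{1}\lessdot\cdots\lessdot m_{n}=\hat{1}$. By \cite{stanley72supersolvable}*{Proposition~2.1} each $m_{i}$ is modular, hence left-modular, so this $M$-chain is in particular a left-modular chain and $\LL$ is a left-modular lattice of length $n$. Theorem~\ref{thm:left_modular_snelling} now applies verbatim: the labeling $\lambda(y,z)=\min\{i\mid y\vee(m_{i}\wedge z)=z\}$ associated with this chain is an $\SS_{n}$ EL-labeling of $\LL$, so $\LL$ is $\SS_{n}$ EL-shellable. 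This direction is essentially immediate from the cited results.

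For the converse, assume $\LL$ is a finite graded lattice of length $n$ equipped with an $\SS_{n}$ EL-labeling $\lambda$. Being graded, $\LL$ is bounded, every interval of $\LL$ is graded, and $\rk[a,b]=\rk(b)-\rk(a)$. Let $\hat{0}=m_{0}\lessdot m_{1}\lessdot\cdots\lessdot m_{n}=\hat{1}$ be the unique rising maximal chain guaranteed by the EL-property on $[\hat{0},\hat{1}]$; since its labels are pairwise distinct members of $[n]$ which strictly increase, they are $1,2,\ldots,n$ in this order. Restricting to the subinterval $[\hat{0},m_{i}]$, the chain $m_{0}\lessdot\cdots\lessdot m_{i}$ is rising and hence, by uniqueness, the rising chain of $[\hat{0},m_{i}]$; thus the rising chain of $[\hat{0},m_{i}]$ carries the label set $\{1,\ldots,i\}$, and dually the rising chain of $[m_{i},\hat{1}]$ carries $\{i+1,\ldots,n\}$. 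By Theorem~\ref{thm:graded_left_modular} it suffices to prove that each $m_{i}$ is left-modular, for then the maximal chain $m_{0}\lessdot\cdots\lessdot m_{n}$ is left-modular and $\LL$ is a left-modular, hence supersolvable, lattice.

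To show $m_{i}$ is left-modular, fix $y\leq z$ in $\LL$; since $y\vee(m_{i}\wedge z)\leq(y\vee m_{i})\wedge z$ holds in any lattice when $y\leq z$, gradedness reduces the task to proving that these two elements have equal rank. The engine for this is a structural analysis of rising-chain label sets: for $w\in\LL$ write $L_{w}\subseteq[n]$ for the label set of the unique rising chain of $[\hat{0},w]$ (so $|L_{w}|=\rk(w)$), and prove, by induction on rank and using the uniqueness of rising chains in subintervals together with the lexicographic-minimality clause of the EL-axioms, a description of how the meet and join of $w$ with a chain element $m_{i}$ are controlled by the way $L_{w}$ splits across $\{1,\ldots,i\}$ and $\{i+1,\ldots,n\}$ --- in particular $\rk(w\vee m_{i})-\rk(m_{i})=\rk(w)-\rk(w\wedge m_{i})=\bigl|L_{w}\cap\{i+1,\ldots,n\}\bigr|$, together with enough information to pin down the elements $w\vee m_{i}$ and $w\wedge m_{i}$ themselves. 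Feeding these identities (and the accompanying identification of the elements) through the definition of left-modularity then forces $\rk\bigl((y\vee m_{i})\wedge z\bigr)=\rk\bigl(y\vee(m_{i}\wedge z)\bigr)$, completing the proof. The main obstacle is precisely this last step: an $\SS_{n}$ EL-labeling is a priori only a combinatorial/topological certificate, and turning it into the algebraic identity defining left-modularity demands this delicate induction and bookkeeping of label sets; the rest is a routine packaging of Theorems~\ref{thm:graded_left_modular} and~\ref{thm:left_modular_snelling}.
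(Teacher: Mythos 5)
The paper does not prove this statement at all: it is imported verbatim as \cite{mcnamara03labelings}*{Theorem~1}, so there is no internal argument to compare against. Judged on its own terms, your forward direction is correct and complete: an $M$-chain consists of modular, hence left-modular, elements by \cite{stanley72supersolvable}*{Proposition~2.1}, so Theorem~\ref{thm:left_modular_snelling} applies and yields an $\SS_{n}$ EL-labeling. (Alternatively, one could invoke Theorem~\ref{thm:graded_left_modular} only for the converse and note that this direction already follows from \cite{bjorner80shellable}*{Theorem~3.7}.)

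The converse, however, has a genuine gap, and it sits exactly where the entire difficulty of McNamara's theorem lies. You correctly reduce, via Theorem~\ref{thm:graded_left_modular} and gradedness, to showing that each element $m_{i}$ of the unique rising maximal chain is left-modular, and you correctly observe that the inequality $y\vee(m_{i}\wedge z)\leq(y\vee m_{i})\wedge z$ holds for $y\leq z$, so equality of ranks suffices. But the identities you then need --- $\rk(w\vee m_{i})-\rk(m_{i})=\rk(w)-\rk(w\wedge m_{i})=\bigl|L_{w}\cap\{i+1,\ldots,n\}\bigr|$, plus ``enough information to pin down the elements $w\vee m_{i}$ and $w\wedge m_{i}$ themselves'' --- are asserted as a plan (``prove, by induction on rank\ldots a description of how\ldots'') rather than proved. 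Nothing in the EL-axioms makes these rank formulas routine: an $\SS_{n}$ EL-labeling constrains label sequences of maximal chains, and extracting from it control over joins and meets with the $m_{i}$ is the substantive content of \cite{mcnamara03labelings}. Moreover, even granting the displayed rank identity for a single element $w$, computing $\rk\bigl((y\vee m_{i})\wedge z\bigr)$ requires knowing the label set $L_{y\vee m_{i}}$ and how it interacts with $z$, which your sketch does not supply. As written, the hard implication is a statement of intent, not a proof.
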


The existence of an EL-labeling of $\LL$ has further implications on the homotopy type of the \defn{order complex} associated to $\LL$, \ie the simplicial complex whose faces are the chains of $\LL$.  

\begin{theorem}[\cite{bjorner96shellable}*{Theorem~5.9}]\label{thm:shellable_wedge}
	Let $\LL$ be a bounded graded poset of rank $n$ with $\mu(\hat{0},\hat{1})=k$.  If $\LL$ is EL-shellable, then the order complex of $\LL$ with $\hat{0}$ and $\hat{1}$ removed has the homotopy type of a wedge of $\lvert k\rvert$-many $(n-2)$-dimensional spheres.  Moreover, $k$ is precisely the number of maximal chains of $\LL$ with weakly decreasing label sequence.
\end{theorem}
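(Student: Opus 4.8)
The plan is to obtain the statement from two classical ingredients: that an EL-labeling induces a shelling of the order complex, and that shellable complexes have a wedge-of-spheres homotopy type. Write $\bar{\LL}$ for the proper part $L\setminus\{\hat{0},\hat{1}\}$ and let $\Delta$ denote its order complex. Because $\LL$ is graded of rank $n$, every maximal chain of $\LL$ has the form $\hat{0}=x_{0}\lessdot x_{1}\lessdot\cdots\lessdot x_{n}=\hat{1}$, so deleting the two endpoints shows that the facets of $\Delta$ are exactly the sets $\{x_{1},\ldots,x_{n-1}\}$, each a simplex of dimension $n-2$. Thus $\Delta$ is pure of dimension $n-2$, and it suffices to produce a shelling of $\Delta$ and to count its homology facets.

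First I would order the facets of $\Delta$ lexicographically by their label sequences $\lambda(C)$ (refining ties arbitrarily) and argue that this is a shelling order. For an interior vertex $x_{i}$ of a maximal chain $C$, call $i$ a \emph{descent} if $\lambda(x_{i-1},x_{i})\not\prec\lambda(x_{i},x_{i+1})$, and set $R(C)=\{x_{i}\mid i\text{ is a descent}\}$. The crux is to show that the faces of $C$ lying in no lex-earlier facet are exactly those containing $R(C)$. This rests on the EL-property applied to each rank-two interval $[x_{i-1},x_{i+1}]$: such an interval has a unique rising maximal chain, so at every ascent the step through $x_{i}$ can be rerouted through that rising chain, strictly lowering the label sequence, whereas at a descent no such detour exists. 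Turning this replacement-and-uniqueness bookkeeping into a verified shelling, with $R(C)$ as the restriction face of $C$, is the technical heart of the argument and the step I expect to be the main obstacle.

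Once the lex order is known to be a shelling, the standard topology of shellable complexes applies: a shellable complex that is pure of dimension $d$ is homotopy equivalent to a wedge of $d$-spheres, one for each \emph{homology facet}, that is, each facet $C$ with $R(C)=C$. By the description of $R(C)$ these are precisely the maximal chains in which every interior vertex is a descent, equivalently the chains whose label sequence $\lambda(C)$ has no ascent and is thus weakly decreasing with respect to $\prec$. Hence $\Delta$ is a wedge of $(n-2)$-spheres indexed by these falling chains, and the number of spheres equals their number $N$.

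It remains to identify $N$ with $\lvert k\rvert$. By Philip Hall's theorem the reduced Euler characteristic of $\Delta$ equals $\mu(\hat{0},\hat{1})=k$. For a wedge of $N$ spheres of dimension $n-2$ the only nonvanishing reduced homology is $\tilde{H}_{n-2}\cong\mathbb{Z}^{N}$, so $\tilde{\chi}(\Delta)=(-1)^{n-2}N=(-1)^{n}N$, and comparison gives $N=(-1)^{n}k=\lvert k\rvert$. This both confirms the wedge of $\lvert k\rvert$ spheres and shows that the number of maximal chains with weakly decreasing label sequence is $\lvert k\rvert$, the sign of $k$ being fixed as $(-1)^{n}$ by the Euler-characteristic computation. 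Everything after the shelling verification is formal, so the whole proof hinges on establishing that step cleanly.
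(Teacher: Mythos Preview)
The paper does not give its own proof of this statement: Theorem~\ref{thm:shellable_wedge} is quoted from \cite{bjorner96shellable}*{Theorem~5.9} as a black box, with no argument supplied. There is therefore nothing in the paper to compare your proposal against.

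That said, your sketch is the standard Bj\"orner--Wachs route to this classical result. You correctly isolate the three ingredients: (i) the lexicographic order on maximal chains is a shelling of the order complex with restriction face $R(C)$ given by the descent positions; (ii) a pure shellable $(n-2)$-complex is a wedge of $(n-2)$-spheres indexed by the homology facets $R(C)=C$, i.e.\ the falling chains; (iii) Philip Hall's theorem identifies the reduced Euler characteristic with $\mu(\hat 0,\hat 1)$, forcing the sphere count to be $\lvert k\rvert$ with sign $(-1)^{n}$. You are also right that the only nontrivial step is (i); the replacement argument you describe---rerouting through the unique rising chain in each rank-two interval $[x_{i-1},x_{i+1}]$ at an ascent---is exactly how Bj\"orner and Wachs verify the shelling condition, and one does need to check carefully that any lex-earlier facet sharing a codimension-one face with $C$ can be reached by such a single swap. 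One small caveat: the label poset $(\Lambda,\prec)$ is allowed to be a partial order, so ``weakly decreasing'' should be read as ``no ascent'', i.e.\ $\lambda(x_{i-1},x_i)\not\prec\lambda(x_i,x_{i+1})$ for all $i$; your descent definition already captures this, but the phrasing in the final paragraph silently assumes a total order.
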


\subsection{Noncrossing Set Partitions}
	\label{sec:noncrossing_set_partitions}
A \defn{set partition} of $n$ is a covering $\xx=\bigl\{B_{1},B_{2},\ldots,B_{s}\}$ of $[n]$ into non-empty, mutually disjoint sets; which we call \defn{blocks}.  Let $\Pi_{n}$ denote the set of all set partitions of $n$.  For $i,j\in[n]$ and $\xx\in\Pi_{n}$ we write $i\sim_{\xx} j$ if there is $B\in\xx$ with $i,j\in B$.  It is easily seen that $\sim_{\xx}$ is an equivalence relation; in fact set partitions of $[n]$ and equivalence relations on $[n]$ are in bijection.  Let $\zero$ be the discrete partition which consists of $n$ singleton blocks, and let $\one$ be the full partition which consists only of a single block.

A set partition $\xx$ is \defn{noncrossing} if for any four indices $1\leq i<j<k<l\leq n$ the relations $i\sim_{\xx} k$ and $j\sim_{\xx} l$ imply $i\sim_{\xx} j$.  Let $\nc_{n}$ denote the set of noncrossing set partitions of $n$.

Set partitions can be partially ordered as follows.  Let $\xx,\xx'\in\Pi_{n}$, and say that $\xx=\{B_{1},B_{2},\ldots,B_{s}\}$ and $\xx'=\{B'_{1},B'_{2},\ldots,B'_{s'}\}$.  We have $\xx\dref\xx'$ if and only if for each $i\in[s]$ there exists $i'\in[s']$ such that $B_{i}\subseteq B'_{i'}$.  We call $\dref$ the \defn{dual refinement order}.  Figure~\ref{fig:partitions_4} shows for the poset $(\Pi_{4},\dref)$, in which the subposet $(\nc_{4},\dref)$ is highlighted.  We have omitted braces in the labeling of the vertices, and have separated blocks by vertical lines instead.

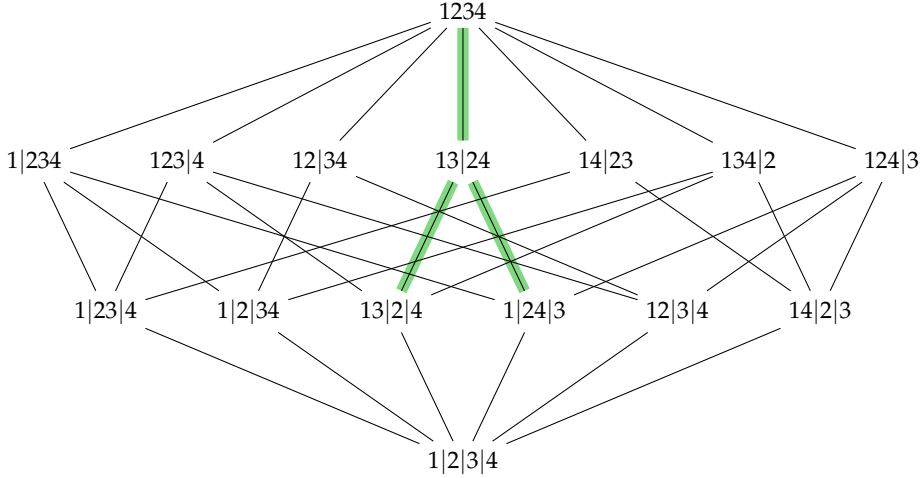
\begin{figure}
	\centering
	\begin{tikzpicture}\small
		\def\x{1.9};
		\def\y{2};
		\draw(3.5*\x,1*\y) node(n1){$1|2|3|4$};
		\draw(1*\x,2*\y) node(n2){$1|23|4$};
		\draw(2*\x,2*\y) node(n3){$1|2|34$};
		\draw(3*\x,2*\y) node(n4){$13|2|4$};
		\draw(4*\x,2*\y) node(n5){$1|24|3$};
		\draw(5*\x,2*\y) node(n6){$12|3|4$};
		\draw(6*\x,2*\y) node(n7){$14|2|3$};
		\draw(.5*\x,3*\y) node(n8){$1|234$};
		\draw(1.5*\x,3*\y) node(n9){$123|4$};
		\draw(2.5*\x,3*\y) node(n10){$12|34$};
		\draw(3.5*\x,3*\y) node(n11){$13|24$};
		\draw(4.5*\x,3*\y) node(n12){$14|23$};
		\draw(5.5*\x,3*\y) node(n13){$134|2$};
		\draw(6.5*\x,3*\y) node(n14){$124|3$};
		\draw(3.5*\x,4*\y) node(n15){$1234$};
		\draw(n1) -- (n2);
		\draw(n1) -- (n3);
		\draw(n1) -- (n4);
		\draw(n1) -- (n5);
		\draw(n1) -- (n6);
		\draw(n1) -- (n7);
		\draw(n2) -- (n8);
		\draw(n2) -- (n9);
		\draw(n2) -- (n12);
		\draw(n3) -- (n8);
		\draw(n3) -- (n10);
		\draw(n3) -- (n13);
		\draw(n4) -- (n9);
		\draw(n4) -- (n11);
		\draw(n4) -- (n13);
		\draw(n5) -- (n8);
		\draw(n5) -- (n11);
		\draw(n5) -- (n14);
		\draw(n6) -- (n9);
		\draw(n6) -- (n10);
		\draw(n6) -- (n14);
		\draw(n7) -- (n12);
		\draw(n7) -- (n13);
		\draw(n7) -- (n14);
		\draw(n8) -- (n15);
		\draw(n9) -- (n15);
		\draw(n10) -- (n15);
		\draw(n11) -- (n15);
		\draw(n12) -- (n15);
		\draw(n13) -- (n15);
		\draw(n14) -- (n15);
		\begin{pgfonlayer}{background}
			\draw[opacity=.5,line width=.15cm,green!70!black](n4) -- (n11);
			\draw[opacity=.5,line width=.15cm,green!70!black](n5) -- (n11);
			\draw[opacity=.5,line width=.15cm,green!70!black](n11) -- (n15);
		\end{pgfonlayer}
	\end{tikzpicture}
	\caption{The poset $(\Pi_{4},\dref)$.  The non-highlighted edges induce the subposet $(\nc_{4},\dref)$.}
	\label{fig:partitions_4}
\end{figure}

The posets $(\Pi_{n},\dref)$ and $(\nc_{n},\dref)$ are in fact lattices, and we can explicitly describe the meet and join operations.  The meet of two set partitions $\xx,\xx'\in\Pi_{n}$ is 
\begin{equation}\label{eq:partition_meet}
	\xx\wedge_{\Pi}\xx' = \{B\cap B'\mid B\in\xx, B'\in\xx',\;\text{and}\;B\cap B'\neq\emptyset\}.
\end{equation}
In order to describe the join of $\xx$ and $\xx'$, consider the bipartite graph 
\begin{displaymath}
	\mathbf{P}_{\xx,\xx'}=\bigl([n]\uplus(\xx\cup\xx'),E\bigr), 
\end{displaymath}
where $(v_{1},v_{2})\in E$ if and only if $v_{1}\in[n],v_{2}\in(\xx\cup\xx')$, and $v_{1}\in v_{2}$.  We have
\begin{equation}\label{eq:partition_join}
	\xx\vee_{\Pi}\xx' = \bigl\{C\cap[n]\mid C\;\text{is a connected component of}\;\mathbf{P}_{\xx,\xx'}\bigr\}.
\end{equation}

\begin{example}\label{ex:partitions_1}
	Let 
	\begin{displaymath}
		\xx=\bigl\{\{1\},\{2\},\{4\},\{3,5,7,8\},\{6\}\bigr\}\quad\text{and}\quad\xx'=\bigl\{\{1,3\},\{2,4\},\{5,6,8\},\{7\}\bigr\}.
	\end{displaymath}
	We observe that $\xx$ is non-crossing, while $\xx'$ is not, since $1\sim_{\xx'}3$ and $2\sim_{\xx'}4$, but $1\not\sim_{\xx'}2$.  Their meet is 
	\begin{displaymath}
		\xx\wedge_{\Pi}\xx' = \bigl\{\{1\},\{2\},\{3\},\{4\},\{5,8\},\{6\},\{7\}\bigr\}.
	\end{displaymath}
	The graph $\mathbf{P}_{\xx,\xx'}$ is\\
	\begin{center}\begin{tikzpicture}
		\def\x{1.5};
		\def\y{1};
		\draw(1.5*\x,1*\y) node(n2){$2$};
		\draw(2.5*\x,1*\y) node(n4){$4$};
		\draw(4*\x,1*\y) node(n1){$1$};
		\draw(5*\x,1*\y) node(n3){$3$};
		\draw(6*\x,1*\y) node(n5){$5$};
		\draw(8*\x,1*\y) node(n8){$8$};
		\draw(7*\x,1*\y) node(n7){$7$};
		\draw(9*\x,1*\y) node(n6){$6$};
		\draw(1*\x,2*\y) node(m2){$\{2\}$};
		\draw(2*\x,2*\y) node(m24){$\{2,4\}$};
		\draw(3*\x,2*\y) node(m4){$\{4\}$};
		\draw(4*\x,2*\y) node(m1){$\{1\}$};
		\draw(5*\x,2*\y) node(m13){$\{1,3\}$};
		\draw(6*\x,2*\y) node(m3578){$\{3,5,7,8\}$};
		\draw(8*\x,2*\y) node(m568){$\{5,6,8\}$};
		\draw(7*\x,2*\y) node(m7){$\{7\}$};
		\draw(9*\x,2*\y) node(m6){$\{6\}$};
		\draw(n1) -- (m1);
		\draw(n1) -- (m13);
		\draw(n2) -- (m2);
		\draw(n2) -- (m24);
		\draw(n3) -- (m13);
		\draw(n3) -- (m3578);
		\draw(n4) -- (m4);
		\draw(n4) -- (m24);
		\draw(n5) -- (m568);
		\draw(n5) -- (m3578);
		\draw(n6) -- (m6);
		\draw(n6) -- (m568);
		\draw(n7) -- (m7);
		\draw(n7) -- (m3578);
		\draw(n8) -- (m568);
		\draw(n8) -- (m3578);
	\end{tikzpicture}\end{center}
	which implies $\xx\vee_{\Pi}\xx'=\bigl\{\{1,3,5,6,7,8\},\{2,4\}\bigr\}$.
\end{example}

For $\xx\in\Pi_n$ denote by $\overline{\xx}$ the \defn{noncrossing closure} of $\xx$, which is defined by successively joining crossing blocks.  It is immediate that $\xx\dref\overline{\xx}$, and \cite{kreweras72sur}*{Th{\'e}or{\`e}me~1} states that $\overline{\xx}$ is the smallest noncrossing partition (weakly) above $\xx$.  The meet of two noncrossing set partitions $\xx,\xx'\in\nc_{n}$ is then
\begin{equation}\label{eq:nc_meet}
	\xx\wedge_{\nc}\xx' = \xx\wedge_{\Pi}\xx',
\end{equation}
while their join is
\begin{equation}\label{eq:nc_join}
	\xx\vee_{\nc}\xx' = \overline{\xx\vee_{\Pi}\xx'}.
\end{equation}

\begin{example}\label{ex:partitions_2}
	Let $\xx'$ be the crossing set partition from Example~\ref{ex:partitions_1}.  We obtain 
	\begin{displaymath}
		\overline{\xx'}=\bigl\{\{1,2,3,4\},\{5,6,8\},\{7\}\bigr\},
	\end{displaymath}
	and $\xx\wedge_{\nc}\overline{\xx'}=\xx\wedge_{\Pi}\xx'$ and $\xx\vee_{\nc}\overline{\xx'}=\one$.
\end{example}

Let us summarize this in a theorem.

\begin{theorem}[Folklore, \cite{kreweras72sur}*{Th{\'e}or{\`e}mes~2~and~3}]\label{thm:partitions}
	For $n\geq 1$, the posets $(\Pi_{n},\dref)$ and $(\nc_{n},\dref)$ are graded lattices.  The rank of a (noncrossing) set partition is given by $n$ minus the number of its blocks.
\end{theorem}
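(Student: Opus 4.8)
The plan is to establish, in turn for both $(\Pi_{n},\dref)$ and $(\nc_{n},\dref)$, the lattice property and then gradedness, reading off the rank formula along the way; throughout I would use the meet and join descriptions \eqref{eq:partition_meet}--\eqref{eq:nc_join}. Both posets are bounded, with least element $\zero$ and greatest element $\one$ (note $\zero,\one\in\nc_{n}$), and I would first check that each is a meet-semilattice. For $\Pi_{n}$, the common refinement $\xx\wedge_{\Pi}\xx'$ of \eqref{eq:partition_meet} is a lower bound of $\xx$ and $\xx'$, and if $\zz\dref\xx$ and $\zz\dref\xx'$ then each block of $\zz$ lies inside some block of $\xx$ and inside some block of $\xx'$, hence inside some intersection $B\cap B'$, so $\zz\dref\xx\wedge_{\Pi}\xx'$ and this is the meet. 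For $\nc_{n}$, I would show that $\xx\wedge_{\Pi}\xx'$ is noncrossing whenever $\xx,\xx'$ are: if $1\leq i<j<k<l\leq n$ with $i\sim k$ and $j\sim l$ in $\xx\wedge_{\Pi}\xx'$, then these relations hold in $\xx$ and in $\xx'$ separately, and the noncrossing property of each of $\xx,\xx'$ forces $i\sim_{\xx}j$ and $i\sim_{\xx'}j$, so $i\sim j$ in $\xx\wedge_{\Pi}\xx'$ as well. Hence the $\Pi_{n}$-meet of two noncrossing partitions lies in $\nc_{n}$ and is their meet there. Since a finite meet-semilattice with a greatest element is a lattice --- the join of two elements being the meet of all their common upper bounds --- both posets are lattices, and one then recognizes the join in $\Pi_{n}$ as the connected-components construction \eqref{eq:partition_join} and the join in $\nc_{n}$ as the noncrossing closure \eqref{eq:nc_join}, the latter via \cite{kreweras72sur}*{Th{\'e}or{\`e}me~1}, which says that $\overline{\yy}$ is the smallest noncrossing partition weakly above $\yy$.

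For gradedness, write $\lvert\xx\rvert$ for the number of blocks of $\xx$ and put $r(\xx)=n-\lvert\xx\rvert$. First, $r$ is strictly order-preserving on either poset: if $\xx\dref\xx'$ with $\xx\neq\xx'$, some block of $\xx'$ is the union of at least two blocks of $\xx$, so $\lvert\xx\rvert>\lvert\xx'\rvert$. Second, I claim that whenever $\xx\dref\xx'$ with $\xx\neq\xx'$ there is $\zz$ --- in $\Pi_{n}$, respectively in $\nc_{n}$ --- with $\xx\lessdot\zz\dref\xx'$ obtained from $\xx$ by merging exactly two of its blocks. For $\Pi_{n}$ this is immediate: pick a block $B'$ of $\xx'$ that unites $\geq 2$ blocks of $\xx$ and merge any two of them; merging two blocks of $\xx$ cannot be factored through an intermediate partition, so $\xx\lessdot\zz$. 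For $\nc_{n}$ the two blocks must be chosen so that their union keeps the partition noncrossing: restricting to such a $B'$, the partition $\xx|_{B'}$ is noncrossing on the linearly ordered set $B'$ and has $\geq 2$ blocks, it possesses a block that is an interval of $B'$ (take a block of size $\geq 2$ of minimal width, or any block if they are all singletons), and merging this interval block with the block containing its immediate successor in $B'$ --- its immediate predecessor, if it contains $\max B'$ --- again yields a noncrossing partition of $B'$; carrying out this merge inside $\xx$ then keeps $\xx$ noncrossing, because a crossing between the enlarged block, which stays inside $B'$, and a block contained in another block $B''$ of $\xx'$ would project to a crossing of $B'$ and $B''$ in $\xx'$. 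Iterating the second step produces a saturated chain from $\xx$ to $\xx'$ of length $\lvert\xx\rvert-\lvert\xx'\rvert=r(\xx')-r(\xx)$. Together with the strict monotonicity of $r$ this shows that every cover relation $\zz\lessdot\ww$ satisfies $r(\ww)=r(\zz)+1$ --- otherwise the second step would place an element strictly between $\zz$ and $\ww$ --- and hence that every saturated chain has length equal to the difference of the $r$-values of its endpoints. So both posets are graded with rank function $r$; taking $\xx=\zero$ and $\xx'=\one$ gives rank $n-1$, and the rank of a (noncrossing) partition $\xx$ is $n-\lvert\xx\rvert$.

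The one step that is not purely formal is the claim, used in the previous paragraph, that the prescribed merge in $\nc_{n}$ stays noncrossing --- both inside $B'$ and after transporting it back into $[n]$. This reduces to a direct case analysis of where a hypothetical crossing could sit relative to the interval block and its chosen neighbour, repeatedly invoking the noncrossing property of $\xx$ (and of $\xx'$ for the globalization). It involves no new idea, but it is where essentially all of the work lies.
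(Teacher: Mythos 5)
The paper offers no proof of this statement at all: it is quoted as folklore and attributed to Kreweras \cite{kreweras72sur}, so there is no argument of the author's to compare yours against. Your self-contained proof is essentially correct and follows the standard route: the meet is computed in $\Pi_{n}$ and shown to preserve noncrossingness, the lattice property follows because a finite meet-semilattice with a greatest element is a lattice, and gradedness is obtained by showing that $r(\xx)=n-\lvert\xx\rvert$ is strictly increasing and that any strict relation $\xx\dref\xx'$ can be refined by merging exactly two blocks of $\xx$ lying in a common block $B'$ of $\xx'$. The one place to be careful is your recipe for choosing those two blocks in the noncrossing case: a block of size at least $2$ of minimal width need \emph{not} be an interval of $B'$ --- in $\bigl\{\{1,3\},\{2\}\bigr\}$ the minimal-width non-singleton block is $\{1,3\}$ --- so the parenthetical justification, and the edge case where that block contains both $\min B'$ and $\max B'$, are off. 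The repair is easy: a block of minimal width among \emph{all} blocks (singletons included) is an interval and cannot be all of $B'$; alternatively, one checks that for \emph{any} block $C$ of the restriction of $\xx$ to $B'$, merging $C$ with the block containing the immediate successor in $B'$ of $\max C$ (or the predecessor of $\min C$ when $\max C=\max B'$) preserves noncrossingness, the only relevant fact being that no element of $B'$ lies strictly between $\max C$ and that successor, so that any hypothetical crossing of the merged block with a third block descends to a crossing already present before the merge. With that adjustment the deferred case analysis closes, and your globalization step (a crossing of the enlarged block with a block inside a different block $B''$ of $\xx'$ would force $B'$ and $B''$ to cross in $\xx'$) is correct as stated.
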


For $i\in[n]$ define $\xx_{i}$ to be the noncrossing partition with the unique non-singleton block $[i-1]\cup\{n\}$.  We thereby understand $\xx_{1}=\zero$ and $\xx_{n}=\one$.  It follows that 
\begin{equation}\label{eq:good_chain}
	C=\{\xx_{1},\xx_{2},\ldots,\xx_{n}\}
\end{equation}
is a maximal chain in $(\nc_{n},\dref)$

\begin{proposition}\label{prop:xi_modular}
	For $i\in[n]$ the element $\xx_{i}$ is left-modular in $(\nc_{n},\dref)$.  
\end{proposition}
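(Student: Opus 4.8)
The plan is to transfer the statement to the full partition lattice $(\Pi_{n},\dref)$, where it is classical that a partition with a single non-singleton block is a modular element: $(\Pi_{n},\dref)$ is supersolvable with a modular maximal chain built precisely from such partitions, so by \cite{stanley72supersolvable}*{Proposition~2.1} each of them is modular, and by the $\SS_{n}$-symmetry of $\Pi_{n}$ the same holds for the partition with any prescribed single non-singleton block — in particular for $\xx_{i}$. By \eqref{eq:nc_meet} the meet operations of $\nc_{n}$ and $\Pi_{n}$ agree, so the only obstruction to importing this is the noncrossing closure in the join \eqref{eq:nc_join}; the crux is to show that in the joins that actually occur, this closure does nothing. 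I would first dispose of the trivial cases $i=1$ ($\xx_{1}=\zero$) and $i=n$ ($\xx_{n}=\one$) and assume $2\le i\le n-1$, so that the unique non-singleton block $A=[i-1]\cup\{n\}$ of $\xx_{i}$ is a proper \emph{cyclic} interval of $[n]$ of size at least two. Finally, since the modular inequality $y\vee(\xx_{i}\wedge z)\dref(y\vee\xx_{i})\wedge z$ holds in any lattice whenever $y\dref z$, it suffices to prove the reverse inequality, and I will in fact establish equality.

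The one genuinely combinatorial ingredient, and the step I expect to be the main obstacle, is the following lemma: \emph{if $S$ is a cyclically ordered set, $\tau$ a noncrossing partition of $S$, and $I\subseteq S$ a cyclic interval, then the partition $\tau\vee_{\Pi}\sigma_{I}$ obtained from $\tau$ by merging all blocks meeting $I$ is again noncrossing} (here $\sigma_{I}$ has unique non-singleton block $I$). After discarding the cases $\lvert I\rvert\le 1$ and $I=S$, one argues by contradiction: a crossing in $\tau\vee_{\Pi}\sigma_{I}$ must involve the merged block $\widehat{I}$ and a $\tau$-block $T_{0}$ disjoint from $I$, witnessed by $a,a'\in\widehat{I}$ and $b,b'\in T_{0}$ in cyclic order $a,b,a',b'$. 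Tracing $a$ and $a'$ back to $\tau$-blocks $T_{1}\ni c_{1}$ and $T_{2}\ni c_{2}$ with $c_{1},c_{2}\in I$: if $T_{1}=T_{2}$ then that block already crosses $T_{0}$ in $\tau$, a contradiction; otherwise, noncrossingness of $\tau$ forces $T_{1}$ (hence $c_{1}$) into the gap of $T_{0}$ containing $a$ and $T_{2}$ (hence $c_{2}$) into the gap of $T_{0}$ containing $a'$, and these two gaps lie on opposite sides of $\{b,b'\}$ on the cycle. Thus $c_{1}$ and $c_{2}$ are separated by $b$ and $b'$, contradicting that both lie on the arc $I$, which avoids $b,b'$.

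The remainder is bookkeeping. Since $y$ and $\xx_{i}\wedge_{\Pi}z$ both lie weakly below $z$, so does their $\Pi$-join, and the join decomposes over the blocks of $z$: on each block $Z_{k}$ of $z$ it restricts to $\bigl(y\vert_{Z_{k}}\bigr)\vee_{\Pi}\sigma_{A\cap Z_{k}}$, where $A\cap Z_{k}$ is a cyclic interval of $Z_{k}$. By the lemma each such restriction is noncrossing, hence $y\vee_{\Pi}(\xx_{i}\wedge_{\Pi}z)$ is noncrossing and therefore equals $y\vee_{\nc}(\xx_{i}\wedge_{\nc}z)$; the special case $z=\one$ shows in particular $y\vee_{\nc}\xx_{i}=y\vee_{\Pi}\xx_{i}$. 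Combining these two facts with the modularity of $\xx_{i}$ in $\Pi_{n}$ applied to $y\dref z$ gives
\begin{displaymath}
	(y\vee_{\nc}\xx_{i})\wedge_{\nc}z=(y\vee_{\Pi}\xx_{i})\wedge_{\Pi}z=y\vee_{\Pi}(\xx_{i}\wedge_{\Pi}z)=y\vee_{\nc}(\xx_{i}\wedge_{\nc}z),
\end{displaymath}
so $(y\vee\xx_{i})\wedge z=y\vee(\xx_{i}\wedge z)$ for all $y\dref z$, i.e.\ $\xx_{i}$ is left-modular.
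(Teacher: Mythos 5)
Your proof is correct, but it takes a genuinely different route from the paper. The paper verifies the modular identity directly inside $(\nc_{n},\dref)$: for $\yy\dref\zz$ it fixes a block $B$ of $\yy$, the block $B'$ of $\zz$ containing it, and $A=B'\cap X$ with $X=[i-1]\cup\{n\}$, and checks by a two-case analysis ($B\cap X=\emptyset$ versus $B\cap X\neq\emptyset$) that $(\yy\vee_{\nc}\xx_{i})\wedge_{\nc}\zz$ and $\yy\vee_{\nc}(\xx_{i}\wedge_{\nc}\zz)$ have the same blocks. You instead transfer the statement from $(\Pi_{n},\dref)$, where modularity of single-nontrivial-block partitions is classical, and the entire burden falls on your cyclic-interval lemma: joining a noncrossing partition with $\sigma_{I}$ for a cyclic interval $I$ never creates a crossing, so the noncrossing closure in \eqref{eq:nc_join} is vacuous for all joins that occur (including $\yy\vee_{\nc}\xx_{i}=\yy\vee_{\Pi}\xx_{i}$, the case $z=\one$). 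Your lemma and its gap argument are sound — the two witnesses $c_{1},c_{2}\in I$ are indeed separated by $b,b'$, while $I$, being a cyclic interval avoiding $T_{0}$, lies on one side of $\{b,b'\}$ — and the block-by-block decomposition of the join over the blocks of $z$ is legitimate since $\yy$ and $\xx_{i}\wedge_{\Pi}\zz$ both refine $\zz$. What your approach buys is a reusable structural fact and an explicit justification of the identity $\yy\vee_{\nc}\xx_{i}=\yy\vee_{\Pi}\xx_{i}$, which the paper's proof tacitly uses when it asserts the block structure of $\yy\vee_{\nc}\xx_{i}$; what the paper's direct computation buys is brevity and independence from the modular structure of $\Pi_{n}$. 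The only point worth making fully explicit in a write-up is the final assembly step: noncrossingness of each restriction to a block $Z_{k}$ of $z$ gives noncrossingness of the whole partition because distinct blocks of the join sit inside distinct, mutually noncrossing blocks of $z$.
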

\begin{proof}
	Let $X=[i-1]\cup\{n\}$ be the unique non-singleton block of $\xx_{i}$, and let $\zz\in\nc_{n}$.  
	
	\medskip
	
	We show that $\xx_{i}M\zz$.  Pick $\yy\dref\zz$, and let $B$ be a block of $\yy$.  There exists a unique block $B'$ of $\zz$ with $B\subseteq B'$.  Let $A=B'\cap X$.  We distinguish two cases.
	
	(i) $B\cap X=\emptyset$.  It follows that $B$ is a block of $\yy\vee_{\nc}\xx_{i}$, and it is thus a block of $(\yy\vee_{\nc}\xx_{i})\wedge_{\nc}\zz$, too.  In $\xx_{i}\wedge_{\nc}\zz$ we see that $A$ is a block, while $B'\setminus A$ is split into singleton blocks.  By assumption $B\subseteq (B'\setminus A)$, and we conclude that $B$ is a block of $\yy\vee_{\nc}(\xx_{i}\wedge_{\nc}\zz)$.
	
	(ii) $B\cap X\neq\emptyset$.  It follows that $B\cup X$ is a block of $\yy\vee_{\nc}\xx_{i}$, and that therefore $A\cup B$ is a block of $(\yy\vee_{\nc}\xx_{i})\wedge_{\nc}\zz$.  In $\xx_{i}\wedge_{\nc}\zz$ we see that $A$ is a block, while $B'\setminus A$ is split into singleton blocks.  By assumption $B\cap A\neq\emptyset$, and we thus obtain that $A\cup B$ is a block of $\yy\vee_{\nc}(\xx_{i}\wedge_{\nc}\zz)$.  
	
%
%
%
%
%
\end{proof}

\begin{corollary}\label{cor:nc_supersolvable}
	The chain in \eqref{eq:good_chain} is a left-modular chain in $(\nc_{n},\dref)$, which is thus a supersolvable lattice. 
\end{corollary}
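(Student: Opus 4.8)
The plan is to assemble the corollary from Proposition~\ref{prop:xi_modular} together with the lattice-theoretic facts recalled in Section~\ref{sec:posets_lattices}, so the argument is short. First I would check that $C=\{\xx_{1},\xx_{2},\ldots,\xx_{n}\}$ is not merely a chain but a \emph{maximal} chain of $(\nc_{n},\dref)$, since the definition of a left-modular chain demands maximality. For $i\geq 2$ the partition $\xx_{i}$ has a single non-singleton block, namely $[i-1]\cup\{n\}$, which has $i$ elements; hence $\xx_{i}$ has $n-i+1$ blocks, and by Theorem~\ref{thm:partitions} its rank in $(\nc_{n},\dref)$ equals $i-1$. In particular $\xx_{i}\lessdot\xx_{i+1}$ for each $i$ (passing from $\xx_{i}$ to $\xx_{i+1}$ absorbs the singleton $\{i\}$ into the non-singleton block), so $C$ is a saturated chain from $\zero=\xx_{1}$ to $\one=\xx_{n}$ of length $n-1$, which by Theorem~\ref{thm:partitions} is the rank of $(\nc_{n},\dref)$; thus $C$ is maximal.

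Second, Proposition~\ref{prop:xi_modular} tells us that every element of $C$ is left-modular in $(\nc_{n},\dref)$. By the definitions in Section~\ref{sec:posets_lattices}, a maximal chain all of whose elements are left-modular is a left-modular chain, and a lattice that contains such a chain is left-modular; hence $(\nc_{n},\dref)$ is a left-modular lattice, which is the first assertion of the corollary.

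Finally, to upgrade ``left-modular'' to ``supersolvable'' I would invoke Theorem~\ref{thm:graded_left_modular}: $(\nc_{n},\dref)$ is graded by Theorem~\ref{thm:partitions}, and a finite graded lattice is left-modular if and only if it is supersolvable. This yields supersolvability and completes the proof.

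I do not expect any genuine obstacle here: the substantive work has already been carried out in Proposition~\ref{prop:xi_modular}, and the corollary is essentially a bookkeeping statement. The one point that warrants an explicit sentence rather than a wave of the hand is the verification that $C$ is a \emph{maximal} chain — the notion ``left-modular chain'' is only defined for maximal chains — but this follows immediately from the block count of the $\xx_{i}$ and the rank formula in Theorem~\ref{thm:partitions}.
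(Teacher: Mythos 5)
Your argument is correct and follows the same route as the paper: Proposition~\ref{prop:xi_modular} supplies the left-modularity of each $\xx_{i}$, Theorem~\ref{thm:partitions} supplies gradedness, and Theorem~\ref{thm:graded_left_modular} upgrades left-modularity to supersolvability. The only addition is your explicit check that the chain \eqref{eq:good_chain} is maximal, which the paper asserts without proof just before Proposition~\ref{prop:xi_modular}; this is a harmless (and slightly more careful) supplement.
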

\begin{proof}
	Proposition~\ref{prop:xi_modular} implies that every element in \eqref{eq:good_chain} is left-modular, and Theorem~\ref{thm:partitions} implies that $(\nc_{n},\dref)$ is graded.  In view of Theorem~\ref{thm:graded_left_modular} we conclude that $(\nc_{n},\dref)$ is supersolvable.
\end{proof}

The fact that $(\nc_{n},\dref)$ is supersolvable was established before in \cite{hersh99decomposition}*{Theorem~4.3.2}.  

\begin{corollary}\label{cor:nc_shellable}
	For $n\geq 1$, the lattice $(\nc_{n},\dref)$ is EL-shellable.
\end{corollary}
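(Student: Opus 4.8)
The plan is to read this off directly from the results already assembled, so the proof will be very short. The starting point is Corollary~\ref{cor:nc_supersolvable}: it tells us both that the chain $C=\{\xx_{1},\xx_{2},\ldots,\xx_{n}\}$ of \eqref{eq:good_chain} is a left-modular chain in $(\nc_{n},\dref)$, and that $(\nc_{n},\dref)$ is a supersolvable lattice. Combined with Theorem~\ref{thm:partitions}, which says $(\nc_{n},\dref)$ is graded with $\rk(\nc_{n},\dref)=n-1$, we are in precisely the situation covered by the machinery of Section~\ref{sec:posets_lattices}.

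From here I see two interchangeable routes. The first is to apply Theorem~\ref{thm:left_modular_snelling} to the left-modular chain $\xx_{1}\lessdot\xx_{2}\lessdot\cdots\lessdot\xx_{n}$: this yields the explicit edge-labeling $\lambda(y,z)=\min\{i\mid y\vee x_{i}\wedge z=z\}$ and asserts that it is an $\SS_{n-1}$ EL-labeling of $(\nc_{n},\dref)$. The second is to bypass the explicit labeling and simply invoke Theorem~\ref{thm:supersolvable_snellable}, which guarantees that a finite graded supersolvable lattice of length $n-1$ is $\SS_{n-1}$ EL-shellable. Either way the lattice carries an EL-labeling, and possessing an EL-labeling is the definition of being EL-shellable, so the corollary follows.

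I do not expect any genuine obstacle. The only two points deserving a line of care are the bookkeeping of length versus number of elements (Theorems~\ref{thm:left_modular_snelling} and~\ref{thm:supersolvable_snellable} are phrased for a lattice of length $n$, while our chain has $n$ vertices and hence length $n-1$), and the trivial boundary cases $n=1,2$, where $(\nc_{n},\dref)$ is a single point or a two-element chain and EL-shellability is immediate. With these remarks in place the statement holds for all $n\geq 1$.
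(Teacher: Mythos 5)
Your argument is correct and matches the paper's proof, which likewise deduces the corollary from Theorem~\ref{thm:left_modular_snelling} applied to the left-modular chain guaranteed by Corollary~\ref{cor:nc_supersolvable}. Your remarks on the length bookkeeping and the trivial cases $n\leq 2$ are sensible but not needed beyond that.
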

\begin{proof}
	This follows from Theorem~\ref{thm:left_modular_snelling} and Corollary~\ref{cor:nc_supersolvable}.
\end{proof}

The fact that $(\nc_{n},\dref)$ is EL-shellable was established before in \cite{bjorner80shellable}*{Example~2.9}.

\section{A Subposet of $(\nc_{n},\dref)$}
	\label{sec:pf_element_poset}
Let us define two subsets $L_{1},L_{2}\subseteq\nc_{n}$ by
\begin{align*}
	L_{1} & = \bigl\{\xx\in\nc_{n}\mid \{n-1,n\}\in\xx\bigr\},\\
	L_{2} & = \bigl\{\xx\in\nc_{n}\mid 1\sim_{\xx}n-1\;\text{and}\;\{n\}\in\xx\bigr\}.
\end{align*}
Finally, for $n\geq 3$ define
\begin{equation}\label{eq:nc_desired}
	\pe_{n} = \nc_{n}\setminus\bigl(L_{1}\cup L_{2}\bigr).
\end{equation}

\begin{lemma}[\cite{bruce16decomposition}]
	We have $\bigl\lvert\pe_{3}\bigr\rvert=3$, and for $n\geq 4$ we have 
	\begin{displaymath}
		\Bigl\lvert\pe_{n}\Bigr\rvert = \left(\frac{5}{n+1}+\frac{9}{n-3}\right)\binom{2n-4}{n-4},
	\end{displaymath}
	which is \cite{sloane}*{A071718} with offset $2$.
\end{lemma}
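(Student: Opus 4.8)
The plan is to compute $\lvert\pe_n\rvert = \lvert\nc_n\rvert - \lvert L_1\cup L_2\rvert = C_n - \lvert L_1\rvert - \lvert L_2\rvert + \lvert L_1\cap L_2\rvert$, where $C_n$ is the $n$-th Catalan number, and then simplify the resulting expression into the claimed closed form. So the first step is to enumerate each of the three sets $L_1$, $L_2$, and $L_1\cap L_2$ by a bijection with a well-understood family of noncrossing partitions.

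For $L_1$: a noncrossing partition of $[n]$ containing the block $\{n-1,n\}$ is, after deleting $n-1$ and $n$ and keeping the remaining blocks, exactly a noncrossing partition of $[n-2]$ — and conversely, since $\{n-1,n\}$ being a block never creates a crossing with anything in $[n-2]$. Hence $\lvert L_1\rvert = C_{n-2}$. For $L_2$: here $\{n\}$ is a singleton block and $1\sim_{\xx}n-1$. Removing the singleton $\{n\}$ leaves a noncrossing partition of $[n-1]$ in which $1$ and $n-1$ lie in a common block; I would count these by a standard argument — the noncrossing partitions of $[m]$ with $1\sim m$ are in bijection with noncrossing partitions of $[m-1]$ (merge the block of $1$ with $\{m\}$, or equivalently contract $m$ onto $1$), so $\lvert L_2\rvert = C_{n-2}$ as well. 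For $L_1\cap L_2$: this is empty, because membership in $L_1$ forces $n-1\sim_{\xx}n$ while membership in $L_2$ forces $\{n\}$ to be a singleton; these are contradictory, so $\lvert L_1\cap L_2\rvert = 0$. Also one must check $L_1$ and $L_2$ are genuinely subsets of $\nc_n$ and that the $n\geq 4$ formula and the $n=3$ base case ($C_3 - 2C_1 = 5-2=3$) agree with the statement.

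Therefore $\lvert\pe_n\rvert = C_n - 2C_{n-2}$. The remaining work is the identity
\begin{displaymath}
	C_n - 2C_{n-2} = \left(\frac{5}{n+1}+\frac{9}{n-3}\right)\binom{2n-4}{n-4}
\end{displaymath}
for $n\geq 4$. I would prove this by writing $C_n = \frac{1}{n+1}\binom{2n}{n}$ and $C_{n-2} = \frac{1}{n-1}\binom{2n-4}{n-2}$, expressing both binomials in terms of $\binom{2n-4}{n-4}$ via the ratios $\binom{2n}{n}\big/\binom{2n-4}{n-4}$ and $\binom{2n-4}{n-2}\big/\binom{2n-4}{n-4}$ (each a product of a few consecutive integers over a few consecutive integers), and then combining over a common denominator. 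This is a routine but slightly fiddly manipulation of factorials; it is the only real computation, and it is where sign/index bookkeeping is most error-prone, but there is no conceptual obstacle. One can alternatively verify the identity by checking that both sides satisfy the same first-order recurrence, or simply cite OEIS A071718 for the closed form once $C_n - 2C_{n-2}$ is identified. The main (mild) obstacle is thus purely the algebraic simplification; the combinatorial enumeration of $L_1$, $L_2$, $L_1\cap L_2$ is straightforward.
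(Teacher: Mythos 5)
Your proposal is correct and follows essentially the same route as the paper: both reduce the count to $\cat(n)-2\cat(n-2)$ and then perform the same algebraic simplification in terms of $\binom{2n-4}{n-4}$ (the paper simply cites the identity $\lvert\pe_{n}\rvert=\cat(n)-2\cat(n-2)$ from the reference, whereas you supply the short inclusion--exclusion argument with $\lvert L_{1}\rvert=\lvert L_{2}\rvert=\cat(n-2)$ and $L_{1}\cap L_{2}=\emptyset$, which is valid). The only difference is that you leave the final binomial manipulation as a sketch while the paper carries it out in full; your bijections and the base case $n=3$ check out.
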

\begin{proof}
	Define the $n^{\text{th}}$ Catalan number to be $\cat(n)=\tfrac{1}{n+1}\tbinom{2n}{n}$.  It was observed in \cite{bruce16decomposition} that
	\begin{displaymath}
		\Bigl\lvert\pe_{n}\Bigr\rvert = \cat(n)-2\cat(n-2).
	\end{displaymath}
	We can therefore immediately verify the claim for $n=3$.  For $n\geq 4$, we obtain
	\begin{align*}
		\Bigl\lvert\pe_{n}\Bigr\rvert & = \cat(n)-2\cat(n-2)\\
		& = \frac{1}{n+1}\binom{2n}{n}-\frac{2}{n-1}\binom{2n-4}{n-2}\\
		& = \left(\frac{4(2n-1)(2n-3)}{(n+1)(n-2)(n-3)}-\frac{2n}{(n-2)(n-3)}\right)\binom{2n-4}{n-4}\\
		& = \left(\frac{14n^{2}-34n+12}{(n+1)(n-2)(n-3)}\right)\binom{2n-4}{n-4}\\
		& = \left(\frac{14n-6}{(n+1)(n-3)}\right)\binom{2n-4}{n-4}\\
		& = \left(\frac{5}{n+1}+\frac{9}{n-3}\right)\binom{2n-4}{n-4}.
	\end{align*}
\end{proof}

\subsection{$(\pe_{n},\dref)$ is a Supersolvable Lattice}
	\label{sec:pf_element_structure}
Let us now investigate a few properties of the poset $(\pe_{n},\dref)$.  Our first main result establishes that this poset is in fact a lattice. 

\begin{theorem}\label{thm:pf_element_lattice}
	For $n\geq 3$, the poset $(\pe_{n},\dref)$ is a lattice.
\end{theorem}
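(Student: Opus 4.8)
The plan is to realize $(\pe_{n},\dref)$ as an interval-closed subset of $(\nc_{n},\dref)$ — or more precisely, to show that it is a \emph{meet-subsemilattice} containing $\one$, which for a finite poset with greatest element is enough to conclude it is a lattice (the join in $\pe_n$ is then computed as the meet of all upper bounds lying in $\pe_n$). So the two things to verify are: (a) $\one=\{[n]\}\in\pe_n$, which is immediate since $\one\notin L_1$ (its only block is $[n]\neq\{n-1,n\}$ for $n\geq 3$) and $\one\notin L_2$ (it has no singleton block $\{n\}$); and (b) if $\xx,\xx'\in\pe_n$ then $\xx\wedge_{\nc}\xx'\in\pe_n$, i.e. $\xx\wedge_{\nc}\xx'\notin L_1\cup L_2$. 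By \eqref{eq:nc_meet} and \eqref{eq:partition_meet}, $\xx\wedge_{\nc}\xx'$ is the common refinement, whose blocks are the nonempty intersections $B\cap B'$.

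First I would handle $L_1$. Suppose $\{n-1,n\}$ is a block of $\xx\wedge_{\nc}\xx'$. Then $n-1\sim n$ in both $\xx$ and $\xx'$, so each of $\xx,\xx'$ has a block $B$ (resp.\ $B'$) containing $\{n-1,n\}$; and since the intersection block equals exactly $\{n-1,n\}$, one checks that the block of $\xx$ containing $n$ and the block of $\xx'$ containing $n$ must already be $\{n-1,n\}$ itself — otherwise the intersection would be larger or the noncrossing condition would force extra elements in. Actually the cleanest route: if $\{n-1,n\}\in\xx\wedge_{\nc}\xx'$ and $\{n-1,n\}\notin\xx$, then the block $B\ni n$ of $\xx$ strictly contains $\{n-1,n\}$, so it contains some $j<n-1$; as $\xx$ is noncrossing and $n$ is the largest element, $B$ must in fact contain an interval ending at $n-1$ together with $n$... — here I would instead argue directly that $\{n-1,n\}$ being a block of the meet forces it to be a block of at least one of $\xx,\xx'$ is \emph{not} automatic, so the real content is to show it is a block of \emph{both}, contradicting $\xx,\xx'\notin L_1$. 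The key sub-lemma is: in a noncrossing partition, if $n-1\sim n$ and the block containing them is not $\{n-1,n\}$, it contains some $i\leq n-2$ with $i$ the \emph{largest} such; then in the meet, the block of $n$ is $\{n-1,n\}$ only if the other partition "cuts" between $i$ and $n-1$, which by noncrossingness of that other partition means $n-1$ is the minimum of its block or... This needs care — I expect this to be the main obstacle: disentangling precisely when a common refinement produces $\{n-1,n\}$ as a block without either partition having it.

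For $L_2$, suppose $\xx\wedge_{\nc}\xx'\in L_2$, so $\{n\}$ is a block of the meet and $1\sim n-1$ in the meet. Then $\{n\}$ is a block of $\xx$ or of $\xx'$ (this direction \emph{is} easy: if $\{n\}$ is a block of neither, then $n$ lies in a nonsingleton block of each, and one shows the intersection containing $n$ is nonsingleton — because in a noncrossing partition the nonsingleton block containing $n$ must contain $n$'s "predecessor in the block," and a short interleaving argument with the other partition... again requires a little work, but is more tractable: actually $\{n\}$ a block of the meet does \emph{not} force $\{n\}$ a block of a factor either — e.g. blocks $\{1,n\},\{2,\dots,n-1\}$ meet $\{1,\dots\}$... so I will need the same style of noncrossing argument). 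Simultaneously $1\sim n-1$ in the meet forces $1\sim n-1$ in both $\xx$ and $\xx'$. Combining: the factor that has $\{n\}$ as a block lies in $L_2$, contradiction. So in both cases the structure of $\pe_n$ as $\nc_n$ minus $L_1\cup L_2$ is exactly matched to meet-closure, and the theorem follows. I would organize the write-up as one lemma characterizing when $\{n-1,n\}$ (resp.\ $\{n\}$) is a block of a meet of two noncrossing partitions in terms of the factors, then deduce (b) in two short paragraphs.
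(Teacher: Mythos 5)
Your strategy cannot work: $\pe_{n}$ is \emph{not} a meet-subsemilattice of $(\nc_{n},\dref)$, so step (b) of your plan is false. Concretely, for $n=4$ take $\xx=\bigl\{\{1\},\{2,3,4\}\bigr\}$ and $\xx'=\bigl\{\{1,3,4\},\{2\}\bigr\}$. Both lie in $\pe_{4}$ (neither has $\{3,4\}$ as a block, and neither has $\{4\}$ as a singleton block), but their meet in $\nc_{4}$ is $\bigl\{\{1\},\{2\},\{3,4\}\bigr\}\in L_{1}$. So the sub-lemma you were hoping for --- that $\{n-1,n\}$ can be a block of the common refinement only if it is a block of both factors --- is simply not true, and the difficulty you flagged as ``the main obstacle'' is not a technical wrinkle but a genuine obstruction. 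The same phenomenon occurs on the join side: $\bigl\{\{1,2\},\{3\},\{4\}\bigr\}\vee_{\nc}\bigl\{\{1\},\{2,3\},\{4\}\bigr\}=\bigl\{\{1,2,3\},\{4\}\bigr\}\in L_{2}$, so $\pe_{n}$ is not join-closed in $\nc_{n}$ either; your fallback of defining joins as meets of upper bounds presupposes the meet-closure that fails.

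The correct route, and the one the paper takes, is to accept that $\xx\wedge_{\nc}\xx'$ (resp.\ $\xx\vee_{\nc}\xx'$) may leave $\pe_{n}$ and to repair it: when the $\nc$-meet contains the block $\{n-1,n\}$, split that block into two singletons and verify directly that the result is the greatest lower bound \emph{within} $\pe_{n}$; when the $\nc$-join has $\{n\}$ as a singleton block with $1\sim n-1$, merge $\{n\}$ into the block containing $1$ and verify that the result is the least upper bound within $\pe_{n}$. (One does show, roughly as you anticipated, that the $\nc$-meet of two elements of $\pe_{n}$ can never land in $L_{2}$ --- a crossing argument --- and that the $\nc$-join can never land in $L_{1}$, using the description \eqref{eq:partition_join} of the join; but the $L_{1}$ case for meets and the $L_{2}$ case for joins genuinely occur and must be handled by modifying the partition, not excluded.) Your observation (a) that $\one\in\pe_{n}$ is correct but does not rescue the argument once meet-closure fails.
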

\begin{proof}
	Let $\xx,\xx'\in\pe_{n}$.  Let $\ww=\xx\wedge_{\nc}\xx'$, and write $\ww=\{B_{1},B_{2},\ldots,B_{s}\}$.  If $\ww\in\pe_{n}$, define $\xx\wedge_{\pe}\xx'=\ww$.  If $\ww\notin\pe_{n}$, then there are two options.   
	
	(i) $\{n-1,n\}\in\ww$.  Without loss of generality say that $B_{s}=\{n-1,n\}$.  Define $\ww'=\bigl\{B_{1},B_{2},\ldots,B_{s-1},\{n-1\},\{n\}\bigr\}$.  Then, $\ww'\in\pe_{n}$, and $\ww'\dref\ww$, which in particular implies that $\ww'\dref\xx$ and $\ww'\dref\xx'$.  Let $\zz\in\pe_{n}$ with $\zz\dref\xx$ and $\zz\dref\xx'$.  We must thus have $\zz\dref\ww$, and $\{n-1,n\}\notin\zz$, which implies $\{n-1\},\{n\}\in\zz$ and every block of $\zz$ is contained in some $B_{i}$ for $i\in[s]$.  It follows that $\zz\dref\ww'$.  We thus put $\xx\wedge_{\pe}\xx'=\ww'$ for this case.
	
	(ii) $\{n\}\in\ww$ and $1\sim_{\ww}n-1$.  Without loss of generality we can assume that $B_{s}=\{n\}$.  By definition we must have $1\sim_{\xx}n-1$ and $1\sim_{\xx'}n-1$.  Since $\xx,\xx'\in\pe_{n}$ we conclude that there are indices $i\neq j$ with $i\sim_{\xx}n$ and $j\sim_{\xx'}n$.  Since $\{n\}\in\ww$ we conclude $1<i,j<n-1$, which contradicts $\xx,\xx'\in\nc_{n}$.  It follows that this case cannot occur.
	
	\medskip
	
	Now let $\ww=\xx\vee_{\nc}\xx'$, and write $\ww=\{B_{1},B_{2},\ldots,B_{s}\}$.  If $\ww\in\pe_{n}$, define $\xx\vee_{\pe}\xx'=\ww$.  If $\ww\notin\pe_{n}$, then there are two options again.  
	
	(i) $\{n-1,n\}\in\ww$.  In view of \eqref{eq:partition_join} we conclude $\{n-1,n\}\in\xx,\xx'$, which contradicts $\xx,\xx'\in\pe_{n}$.  It follows that this case cannot occur.
	
	(ii) $\{n\}\in\ww$ and $1\sim_{\ww}n-1$.  Without loss of generality let $1,n-1\in B_{1}$, and let $B_{s}=\{n\}$.  Define $\ww'=\{B_{1}\cup B_{s},B_{2},\ldots,B_{s-1}\}$.  We then have $\ww\dref\ww'$, and consequently $\xx\dref\ww'$ and $\xx'\dref\ww'$.  Let $\zz\in\pe_{n}$ with $\xx\dref\zz$ and $\xx'\dref\zz$.  Again by \eqref{eq:partition_join} we conclude $\{n\}\in\xx,\xx'$, and since $\xx,\xx'\in\pe_{n}$ we see that $1\not\sim_{\xx}n-1$ and $1\not\sim_{\xx'}n-1$.  Since $1\sim_{\ww}n-1$ there must be $i\in[n]$ with $1\sim_{\xx}i$ and $i\sim_{\xx'}n-1$.  We thus conclude $1\sim_{\zz}n-1$, and since $\zz\in\pe_{n}$ we further conclude $n-1\sim_{\zz}n$.  This implies $\ww'\dref\zz$.  We thus put $\xx\vee_{\pe}\xx'=\ww'$ for this case.
\end{proof}

\begin{lemma}\label{lem:pf_element_graded}
	For $n\geq 3$, the lattice $(\pe_{n},\dref)$ is graded.
\end{lemma}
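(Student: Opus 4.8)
The plan is to show that every cover relation of $(\pe_n,\dref)$ is already a cover relation of $(\nc_n,\dref)$. Since $\zero$ and $\one$ both belong to $\pe_n$ (which is clear from the definitions, using $n\geq3$), the poset $(\pe_n,\dref)$ is bounded with these as its extreme elements; hence, once the plan is carried out, every maximal chain of $(\pe_n,\dref)$ is a saturated chain of $(\nc_n,\dref)$ from $\zero$ to $\one$, and therefore has length $n-1$ by Theorem~\ref{thm:partitions}. This gives gradedness.

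To show that covers do not skip, I would establish the following: whenever $\xx,\yy\in\pe_n$ satisfy $\xx\dref\yy$ and $\xx\neq\yy$, there exists $\zz\in\pe_n$ with $\xx\lessdot_{\nc}\zz\dref\yy$. This suffices, since if moreover $\xx\lessdot_{\pe}\yy$, the resulting $\zz$ lies in $\pe_n$ with $\xx\dref\zz\dref\yy$ and $\xx\neq\zz$, so it must equal $\yy$, whence $\xx\lessdot_{\nc}\yy$. To prove the statement I would first choose, using that $(\nc_n,\dref)$ is graded, some $\zz\in\nc_n$ with $\xx\lessdot_{\nc}\zz\dref\yy$. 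If $\zz\in\pe_n$ we are done; the real work is the case $\zz\in L_1\cup L_2$, where we must replace $\zz$ by a different $\nc_n$-cover of $\xx$ that lands in $\pe_n$ and still lies below $\yy$.

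This splits into two cases. If $\zz\in L_1$, then $\{n-1,n\}$ is a block of $\zz$, and since $\zz$ covers $\xx$ while $\xx\notin L_1$, this forces $\{n-1\}$ and $\{n\}$ to be blocks of $\xx$ that get merged in $\zz$. The block $D$ of $\yy$ containing $n$ then also contains $n-1$ and, because $\yy\notin L_1$, a further element; I would instead merge the block $\{n\}$ of $\xx$ with the block of $\xx$ containing $a:=\max\bigl(D\setminus\{n-1,n\}\bigr)$. If $\zz\in L_2$, then $\{n\}$ is a block of $\zz$, hence (as $\zz$ covers $\xx$) a block of $\xx$; moreover $1\not\sim_{\xx}n-1$, for otherwise $\xx$ would lie in $L_2$, and $\zz$ merges the block of $1$ with the block of $n-1$. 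A short argument from the noncrossing property of $\yy$, comparing the block of $\yy$ that contains $1$ and $n-1$ with the block that contains $n$, shows that $1$, $n-1$ and $n$ all lie in one block of $\yy$; I would instead merge the block $\{n\}$ of $\xx$ with the block of $\xx$ containing $1$. In both cases the resulting partition $\zz'$ satisfies $\zz'\dref\yy$, and its block containing $n$ has size at least two and is distinct from $\{n-1,n\}$, so $\zz'\notin L_1\cup L_2$, \ie $\zz'\in\pe_n$.

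The step I expect to be the only genuine obstacle is verifying that these replacement merges really yield noncrossing partitions, so that $\zz'$ is an honest $\nc_n$-cover of $\xx$. The key point is that merging the singleton block $\{n\}$ of $\xx$ with another block $B$ of $\xx$ produces a noncrossing partition exactly when no block of $\xx$ other than $B$ has two elements straddling an element of $B$. When $B$ is the block of $1$ this is immediate, as $B$ contains the least element of $[n]$. When $B$ is the block of $a$ it follows from the observation that, since $a$ and $n$ lie in the common block $D$ of $\yy$ and $\xx\dref\yy$, every block of $\xx$ meeting the open interval between $a$ and $n$ is contained in that interval; I would isolate this observation once and use it here. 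The small cases, in particular $n=3$ (where $(\pe_3,\dref)$ is a three-element chain), are handled by the same argument.
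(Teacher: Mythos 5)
Your proof is correct and follows the same overall strategy as the paper: reduce gradedness to the claim that every cover relation of $(\pe_{n},\dref)$ is already a cover relation of $(\nc_{n},\dref)$, and handle the two obstructions $L_{1}$ and $L_{2}$ separately by exhibiting an element of $\pe_{n}$ strictly between $\xx$ and $\yy$. In the $L_{2}$ case your witness (adjoin $n$ to the block of $1$ in $\xx$) is exactly the paper's. The only divergence is in the $L_{1}$ case: the paper goes \emph{down} from $\yy$, splitting $\{n-1\}$ off the block of $\yy$ containing $\{j,n-1,n\}$, which is automatically noncrossing and lands in $\pe_{n}$ with no further work; you instead go \emph{up} from $\xx$, merging $\{n\}$ into the block of $a=\max\bigl(D\setminus\{n-1,n\}\bigr)$, which obliges you to verify noncrossingness via your straddling criterion. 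That verification does go through --- the maximality of $a$ in $D$ together with the noncrossingness of $\yy$ really does confine every block of $\xx$ meeting the open interval between $a$ and $n$ to that interval, and hence rules out straddling --- but the paper's choice of witness makes this extra step unnecessary.
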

\begin{proof}
	Let $\xx,\yy\in\pe_{n}$ with $\xx\lessdot_{\text{\normalfont dref}}\yy$ in $(\pe_{n},\dref)$.  Assume that there is $\zz\in\nc_{n}$ with $\xx<_{\text{\normalfont dref}}\zz<_{\text{\normalfont dref}}\yy$.  It follows that $\zz\in\nc_{n}\setminus\pe_{n}$.  There are two cases.  
	
	(i) $\{n-1,n\}$ is a block of $\zz$.  Since $\{n-1,n\}$ is neither a block of $\xx$, nor of $\yy$, it must be that $n-1$ and $n$ constitute singleton blocks in $\xx$ and there is some $j\in[n-2]$ and some block $B$ of $\yy$ containing $\{j,n-1,n\}$.  Consider the partition $\ww$ that has all blocks of $\yy$ except that $B$ is replaced by the two blocks $B\setminus\{n-1\}$ and $\{n-1\}$.  Since $\yy\in\pe_{n}\subseteq\nc_{n}$ we conclude that $\ww\in\nc_{n}$, and we have $\ww\lessdot_{\text{\normalfont dref}}\yy$.  By construction, $\ww\in\pe_{n}$.  It follows further from $\xx\dref\yy$ that $\xx<_{\text{\normalfont dref}}\ww$ (since $n-1$ and $n$ constitute singleton blocks of $\xx$).  This is a contradiction to $\xx\lessdot_{\text{\normalfont dref}}\yy$ in $(\pe_{n},\dref)$. 
	
	(ii) $\{n\}$ is a block of $\zz$ and $1\sim_{\zz}n-1$.  It follows that $1\sim_{\yy}n-1$, which forces $n-1\sim_{\yy}n$.  Moreover, it follows that $\{n\}$ must be a block of $\xx$, which implies that $1\not\sim_{\xx}n-1$.  Let $B$ be the block of $\xx$ containing $1$.  Consider the partition $\ww$ that consists of all the blocks of $\xx$ except that $B$ is replaced by $B\cup\{n\}$.  Then, $\xx\in\nc_{n}$ implies $\ww\in\pe_{n}$.  Moreover, $\xx\lessdot_{\text{\normalfont dref}}\ww<_{\text{\normalfont dref}}y$, which is a contradiction to $\xx\lessdot_{\text{\normalfont dref}}\yy$ in $(\pe_{n},\dref)$. 
\end{proof}

It follows by definition that the chain \eqref{eq:good_chain} belongs to $(\pe_{n},\dref)$.  It is our next goal to show that this chain is also left-modular in $(\pe_{n},\dref)$.  We first prove an auxiliary result.

\begin{proposition}\label{prop:equal_join_meet}
	For $i\in[n]$ and $\yy\in\pe_{n}$ we have $\xx_{i}\wedge_{\pe}\yy=\xx_{i}\wedge_{\nc}\yy$ and $\xx_{i}\vee_{\pe}\yy=\xx_{i}\vee_{\nc}\yy$.
\end{proposition}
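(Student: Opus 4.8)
The plan is to exploit the construction of the operations $\wedge_{\pe}$ and $\vee_{\pe}$ carried out in the proof of Theorem~\ref{thm:pf_element_lattice}. There one sets $\xx\wedge_{\pe}\xx'=\xx\wedge_{\nc}\xx'$ as soon as $\xx\wedge_{\nc}\xx'\in\pe_{n}$, and a correction step is invoked only when $\xx\wedge_{\nc}\xx'\in L_{1}\cup L_{2}$; and analogously for the join. So the proposition reduces to the two claims that $\xx_{i}\wedge_{\nc}\yy\in\pe_{n}$ and $\xx_{i}\vee_{\nc}\yy\in\pe_{n}$ hold for all $i\in[n]$ and all $\yy\in\pe_{n}$, i.e. that no correction step is ever triggered when one of the arguments is $\xx_{i}$.

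For the meet I would argue directly from the block structure of $\xx_{i}$, whose only non-singleton block is $X=[i-1]\cup\{n\}$. By \eqref{eq:nc_meet} we have $\xx_{i}\wedge_{\nc}\yy=\xx_{i}\wedge_{\Pi}\yy$, and since this partition is $\dref$-below $\xx_{i}$, every block of the meet lies inside a block of $\xx_{i}$; in particular two elements that share a block of the meet share a block of $\xx_{i}$. Now $n-1$ and $n$ lie in the same block of $\xx_{i}$ only when $n-1\in X$, i.e. only for $i=n$, and $1$ and $n-1$ lie in the same block of $\xx_{i}$ only for $i=n$ as well. Hence for $i<n$ the meet $\xx_{i}\wedge_{\nc}\yy$ separates $n-1$ from $n$ and satisfies $1\not\sim n-1$, so it lies in neither $L_{1}$ nor $L_{2}$; and for $i=n$ we have $\xx_{n}=\one$, whence $\xx_{n}\wedge_{\nc}\yy=\yy\in\pe_{n}$. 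In every case $\xx_{i}\wedge_{\nc}\yy\in\pe_{n}$, which by the construction above equals $\xx_{i}\wedge_{\pe}\yy$.

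For the join the decisive observation is that $\xx_{i}\dref\xx_{i}\vee_{\nc}\yy$, so the unique block $B$ of $\xx_{i}\vee_{\nc}\yy$ containing $n$ must contain all of $X=[i-1]\cup\{n\}$. If $\{n\}$ were a block of $\xx_{i}\vee_{\nc}\yy$, then $B=\{n\}$ and hence $X=\{n\}$, forcing $i=1$; and if $\{n-1,n\}$ were a block, then $B=\{n-1,n\}$ and $[i-1]\subseteq\{n-1\}$, which for $n\geq 3$ again forces $i=1$. Since $\xx_{1}=\zero$ gives $\xx_{1}\vee_{\nc}\yy=\yy\in\pe_{n}$, while for $i\geq 2$ neither forbidden configuration can occur, we conclude $\xx_{i}\vee_{\nc}\yy\in\pe_{n}$ throughout, and this equals $\xx_{i}\vee_{\pe}\yy$ by the construction above.

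I do not anticipate a real obstacle: the statement is a routine case check once one records the block shape of $\xx_{i}$ and uses that meet and join in $(\nc_{n},\dref)$ are $\dref$-comparable to both arguments. The only points needing a little care are the extreme indices $i=1$ (where $\xx_{i}=\zero$) and $i=n$ (where $\xx_{i}=\one$), for which the relevant operation collapses to $\yy$, and the invocation of the hypothesis $n\geq 3$ to exclude the degenerate equality $1=n-1$.
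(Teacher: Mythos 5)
Your proposal is correct and follows essentially the same route as the paper: reduce to showing that $\xx_{i}\wedge_{\nc}\yy$ and $\xx_{i}\vee_{\nc}\yy$ already lie in $\pe_{n}$ (so the correction steps in the proof of Theorem~\ref{thm:pf_element_lattice} are never triggered), and use the block structure of $\xx_{i}$ to see that the forbidden configurations force $i=n$ for the meet and $i=1$ for the join, where the statement is trivial. The paper phrases this as a contradiction and omits the subcases it had already ruled out in Theorem~\ref{thm:pf_element_lattice}; your extra checks are redundant but harmless.
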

\begin{proof}
	Let $\yy\in\pe_{n}$.  If $\xx_{i}\wedge_{\pe}\yy<_{\text{\normalfont dref}}\xx_{i}\wedge_{\nc}\yy$, then it follows from the proof of Theorem~\ref{thm:pf_element_lattice} that there exists a block $B$ of $\xx_{i}$ with $\{n-1,n\}\subseteq B$.  By definition this forces $i=n$, so that $\xx_{i}$ is the full partition.  In particular $\yy\dref\xx_{i}$, which yields the contradiction $\yy=\xx_{i}\wedge_{\pe}\yy<_{\text{\normalfont dref}}\xx_{i}\wedge_{\nc}\yy=\yy$.
	
	\medskip
	
	If $\xx_{i}\vee_{\nc}\yy<_{\text{\normalfont dref}}\xx_{i}\vee_{\pe}\yy$, then it follows from the proof of Theorem~\ref{thm:pf_element_lattice} that $\{n\}$ is a block of $\xx_{i}$.  By definition, this forces $i=1$, so that $\xx_{i}$ is the discrete partition.  In particular $\xx_{i}\dref\yy$, which yields the contradiction $\yy=\xx_{i}\vee_{\nc}\yy<_{\text{\normalfont dref}}\xx_{i}\vee_{\pe}\yy=\yy$.
\end{proof}

\begin{proposition}\label{prop:pf_element_left_modular}
	For $n\geq 3$, the chain in \eqref{eq:good_chain} is left-modular in $(\pe_{n},\dref)$.
\end{proposition}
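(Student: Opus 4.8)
The plan is to derive the left-modularity of each $\xx_{i}$ in $(\pe_{n},\dref)$ from its left-modularity in $(\nc_{n},\dref)$, which is Proposition~\ref{prop:xi_modular}, using Proposition~\ref{prop:equal_join_meet} as the bridge between the two lattices. Fix $i\in[n]$ and $\zz\in\pe_{n}$; I need to check $\xx_{i}M\zz$, i.e.\ that
\begin{displaymath}
	(\yy\vee_{\pe}\xx_{i})\wedge_{\pe}\zz=\yy\vee_{\pe}(\xx_{i}\wedge_{\pe}\zz)\quad\text{for every }\yy\in\pe_{n}\text{ with }\yy\dref\zz .
\end{displaymath}
By Proposition~\ref{prop:equal_join_meet} the operations involving $\xx_{i}$ coincide with their $\nc$-versions, so with $\aa=\xx_{i}\wedge_{\nc}\zz$ and $\mathbf v=\yy\vee_{\nc}\aa$ the $\nc$-modularity of $\xx_{i}$ also gives $\mathbf v=(\yy\vee_{\nc}\xx_{i})\wedge_{\nc}\zz$. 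Furthermore, whenever the $\nc$-meet or $\nc$-join of two elements of $\pe_{n}$ happens to land in $\pe_{n}$, it equals the corresponding $\pe$-operation — this is exactly how $\wedge_{\pe}$ and $\vee_{\pe}$ were defined in the proof of Theorem~\ref{thm:pf_element_lattice}. Hence the whole identity reduces to the single claim that $\mathbf v=\yy\vee_{\nc}(\xx_{i}\wedge_{\nc}\zz)$ lies in $\pe_{n}$ for all $\yy,\zz\in\pe_{n}$ with $\yy\dref\zz$: granting it, both sides equal $\mathbf v$.

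I would prove this claim as follows. The cases $i=1$ (so $\xx_{i}=\zero$ and $\mathbf v=\yy$) and $i=n$ (so $\xx_{i}=\one$, $\aa=\zz$ and $\mathbf v=\yy\vee_{\nc}\zz=\zz$) are immediate, so assume $2\le i\le n-1$. Then $\aa=\xx_{i}\wedge_{\Pi}\zz$ has $\{i\},\{i+1\},\dots,\{n-1\}$ as singleton blocks — in particular $\{n-1\}\in\aa$ — while the block of $\aa$ containing $n$ is the intersection of $\zz$'s block of $n$ with $[i-1]\cup\{n\}$, hence is a subset of $[i-1]\cup\{n\}$. Also $\mathbf v=\overline{\yy\vee_{\Pi}\aa}\dref\zz$, because $\yy,\aa\dref\zz$. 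It remains to rule out $\mathbf v\in L_{1}$ and $\mathbf v\in L_{2}$.

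If $\{n-1,n\}$ were a block of $\mathbf v$, then $\yy\dref\mathbf v$ would force the $\yy$-block of $n-1$ into $\{n-1,n\}$; it is not all of $\{n-1,n\}$ since $\yy\notin L_{1}$, so $\{n-1\}\in\yy$. As $\{n-1\}$ is a block of $\aa$ too, \eqref{eq:partition_join} makes $\{n-1\}$ a block of $\yy\vee_{\Pi}\aa$, and the noncrossing closure never merges a singleton block with anything, so $\{n-1\}\in\mathbf v$, a contradiction. If instead $\{n\}\in\mathbf v$ and $1\sim_{\mathbf v}n-1$, then (a singleton block of a noncrossing closure being already a block beforehand) $\{n\}$ is a block of $\yy\vee_{\Pi}\aa$, hence of both $\yy$ and $\aa$; from $\{n\}\in\aa$ the $\zz$-block of $n$ is disjoint from $[i-1]$, so $1\not\sim_{\zz}n$, while $\mathbf v\dref\zz$ gives $1\sim_{\zz}n-1$. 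If $\{n\}$ were not a block of $\zz$, its block of $n$ would contain some $k\neq n$ with necessarily $k\notin[i-1]$ and $k\neq n-1$ (the latter because $1\sim_{\zz}n-1$ and $1\not\sim_{\zz}n$), i.e.\ $i\le k\le n-2$; but $1<k<n-1<n$ together with $1\sim_{\zz}n-1$ and $k\sim_{\zz}n$ forces $1\sim_{\zz}k$ by noncrossingness of $\zz$, hence $1\sim_{\zz}n$ — a contradiction. So $\{n\}\in\zz$, and with $1\sim_{\zz}n-1$ this says $\zz\in L_{2}$, contradicting $\zz\in\pe_{n}$. This establishes the claim and hence the proposition.

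I expect the two exclusions in the last paragraph to be the only real obstacle — in particular the observation that the noncrossing closure cannot fuse $n-1$ with $n$ without dragging in a third element; everything before it is a formal consequence of Propositions~\ref{prop:xi_modular} and~\ref{prop:equal_join_meet} and of the way $\wedge_{\pe},\vee_{\pe}$ were built in Theorem~\ref{thm:pf_element_lattice}.
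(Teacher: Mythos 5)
Your proof is correct and follows essentially the same route as the paper's: reduce to the noncrossing partition lattice via Propositions~\ref{prop:xi_modular} and~\ref{prop:equal_join_meet}, and then verify that the common value of the two sides computed in $(\nc_{n},\dref)$ actually lies in $\pe_{n}$, by excluding membership in $L_{1}$ and $L_{2}$ separately. The only (minor) divergence is in the details of those exclusions — your $L_{1}$ argument via preservation of the singleton block $\{n-1\}$ under the noncrossing closure is a clean alternative to the paper's case analysis on $j\in[i-1]\cup\{n\}$ — but the decomposition and the key lemmas invoked are identical.
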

\begin{proof}
	The elements $\xx_{1}$ and $\xx_{n}$ are the least and the greatest element of $(\pe_{n},\dref)$, so they are trivially left-modular.  Let us therefore assume that $i\in\{2,3,\ldots,n-1\}$.  In particular, $n-1\not\sim_{\xx_{i}}n$ and $\{n\}$ is not a block of $\xx_{i}$.  Let $\zz\in\pe_{n}$.

	\medskip
	
	We show that $\xx_{i}M\zz$ holds in $(\pe_{n},\dref)$.  Let $\yy\in\pe_{n}$ with $\yy\dref\zz$.  Proposition~\ref{prop:equal_join_meet} implies that $\qq=\yy\vee_{\pe}\xx_{i}=\yy\vee_{\nc}\xx_{i}$.  Assume that $\qq\wedge_{\pe}\zz\neq\qq\wedge_{\nc}\zz$.  The proof of Theorem~\ref{thm:pf_element_lattice} implies that this can only happen if $\{n-1,n\}$ is a block of $\qq\wedge_{\nc}\zz$.  For this to happen, we need $n-1\sim_{\qq}n$, which forces the existence of some $j\in[i-1]\cup\{n\}$ with $j\sim_{\yy}n-1$.  If $j<i$, then we obtain the contradiction that $\qq\wedge_{\nc}\zz$ has a block containing $\{j,n-1,n\}$ since $i\leq n-1$.  We thus have $j=n$.  Since $i>1$ we see that $\qq$ has a block containing $\{i-1,n-1,n\}$, which forces $\zz$ to contain the block $\{n-1,n\}$; a contradiction to $\zz\in\pe_{n}$.  We therefore have
	\begin{equation}\label{eq:lmod_pf_1}
		(\yy\vee_{\pe}\xx_{i})\wedge_{\pe}\zz = (\yy\vee_{\nc}\xx_{i})\wedge_{\nc}\zz.
	\end{equation}
	
	On the other hand, Proposition~\ref{prop:equal_join_meet} also implies that $\qq'=\xx_{i}\wedge_{\pe}\zz=\xx_{i}\wedge_{\nc}\zz$.  Assume that $\yy\vee_{\pe}\qq'\neq\yy\vee_{\nc}\qq'$.  The proof of Theorem~\ref{thm:pf_element_lattice} implies that this can only happen if $\{n\}$ is a block of $\yy\vee_{\nc}\qq'$ and $1\sim_{\yy\vee_{\nc}\qq'}n-1$.  By definition of the join, $\{n\}$ must be a block of both $\yy$ and $\qq'$.  Since $i<n$ we see that $\{n-1\}$ is a singleton block in $\qq'$, which forces $1\sim_{\yy}n-1$; a contradiction to $\yy\in\pe_{n}$.  We therefore have
	\begin{equation}\label{eq:lmod_pf_2}
		\yy\vee_{\pe}(\xx_{i}\wedge_{\pe}\zz) = \yy\vee_{\nc}(\xx_{i}\wedge_{\nc}\zz).
	\end{equation}
	Proposition~\ref{prop:xi_modular} implies the equality of the right-hand sides of \eqref{eq:lmod_pf_1} and \eqref{eq:lmod_pf_2}, which implies $\xx_{i}M\zz$ in $(\pe_{n},\dref)$.
\end{proof}

We now conclude the proof of Theorem~\ref{thm:pf_element_supersolvable}.

\begin{proof}[Proof of Theorem~\ref{thm:pf_element_supersolvable}]
	It follows from Theorem~\ref{thm:pf_element_lattice}, Lemma~\ref{lem:pf_element_graded}, and Proposition~\ref{prop:pf_element_left_modular} that $(\pe_{n},\dref)$ is a graded left-modular lattice.  Theorem~\ref{thm:graded_left_modular} implies then that it is supersolvable.
\end{proof}

\begin{corollary}\label{cor:pf_element_shellable}
	For $n\geq 3$, the lattice $(\pe_{n},\dref)$ is EL-shellable.
\end{corollary}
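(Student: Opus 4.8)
The plan is to read the statement off directly from the structural facts already assembled, so that the proof is a single sentence of the same shape as that of Corollary~\ref{cor:nc_shellable}. By Theorem~\ref{thm:pf_element_lattice}, Lemma~\ref{lem:pf_element_graded}, and Proposition~\ref{prop:pf_element_left_modular}, the poset $(\pe_{n},\dref)$ is a finite graded lattice that carries the left-modular maximal chain $C=\{\xx_{1},\xx_{2},\ldots,\xx_{n}\}$ from \eqref{eq:good_chain}; this is exactly the situation to which Theorem~\ref{thm:left_modular_snelling} applies. I would therefore invoke Theorem~\ref{thm:left_modular_snelling} with the chain $C$, re-indexed as $x_{0}\lessdot x_{1}\lessdot\cdots\lessdot x_{n-1}$ so that the length of $(\pe_{n},\dref)$ is $n-1$, and conclude that $\lambda(y,z)=\min\{i\mid y\vee(x_{i}\wedge z)=z\}$ is an $\SS_{n-1}$ EL-labeling of $(\pe_{n},\dref)$. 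Since a lattice that admits an EL-labeling is by definition EL-shellable, the corollary follows.

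An equally short alternative is to combine Theorem~\ref{thm:pf_element_supersolvable} with Theorem~\ref{thm:supersolvable_snellable}: a finite graded lattice is supersolvable if and only if it is $\SS$ EL-shellable, and supersolvability of $(\pe_{n},\dref)$ has just been established. I would keep whichever phrasing makes the exposition tightest; either way a one-line proof suffices.

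There is essentially no obstacle here, since all the work sits in Theorem~\ref{thm:pf_element_supersolvable}. The only point meriting a moment's care is the bookkeeping of the rank: the chain $C$ has $n$ vertices and hence length $n-1$, so the labeling produced by Theorem~\ref{thm:left_modular_snelling} takes values in $[n-1]$ rather than $[n]$, which does not affect EL-shellability. One may also remark that the meet and join implicit in $\lambda$ are the operations $\wedge_{\pe}$ and $\vee_{\pe}$, which by Proposition~\ref{prop:equal_join_meet} coincide with $\wedge_{\nc}$ and $\vee_{\nc}$ whenever one argument is some $\xx_{i}$; this is convenient if one later wants an explicit description of the labeling, but it is not needed for the corollary itself.
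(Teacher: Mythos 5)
Your proposal is correct and matches the paper's argument, which likewise derives the corollary by applying Theorem~\ref{thm:left_modular_snelling} to the left-modular chain established for $(\pe_{n},\dref)$ (the paper cites Theorems~\ref{thm:pf_element_supersolvable} and \ref{thm:left_modular_snelling}). Your remarks on the rank bookkeeping and on Proposition~\ref{prop:equal_join_meet} are accurate but, as you note, not needed.
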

\begin{proof}
	This follows from Theorems~\ref{thm:pf_element_supersolvable} and \ref{thm:left_modular_snelling}.
\end{proof}

Figure~\ref{fig:pf_element_4} shows $(\pe_{4},\dref)$ together with the EL-labeling coming from the left-modular chain in \eqref{eq:good_chain}.  The unique rising maximal chain from $\zero$ to $\one$ is highlighted.

\begin{figure}
	\centering
	\begin{tikzpicture}\small
		\def\x{2};
		\def\y{2};
		\draw(2.5*\x,1*\y) node(n1){$1|2|3|4$};
		\draw(1*\x,2*\y) node(n2){$1|24|3$};
		\draw(2*\x,2*\y) node(n3){$1|23|4$};
		\draw(3*\x,2*\y) node(n4){$12|3|4$};
		\draw(4*\x,2*\y) node(n5){$14|2|3$};
		\draw(1*\x,3*\y) node(n6){$1|234$};
		\draw(2*\x,3*\y) node(n7){$124|3$};
		\draw(3*\x,3*\y) node(n8){$14|23$};
		\draw(4*\x,3*\y) node(n9){$134|2$};
		\draw(2.5*\x,4*\y) node(n10){$1234$};
		\draw(n1) -- (n2) node[fill=white] at (1.75*\x,1.5*\y){\tiny $2$};
		\draw(n1) -- (n3) node[fill=white] at (2.25*\x,1.5*\y){\tiny $3$};
		\draw(n1) -- (n4) node[fill=white] at (2.75*\x,1.5*\y){\tiny $2$};
		\draw(n1) -- (n5) node[fill=white] at (3.25*\x,1.5*\y){\tiny $1$};
		\draw(n2) -- (n6) node[fill=white] at (1*\x,2.5*\y){\tiny $3$};
		\draw(n2) -- (n7) node[fill=white] at (1.33*\x,2.33*\y){\tiny $1$};
		\draw(n3) -- (n6) node[fill=white] at (1.67*\x,2.33*\y){\tiny $2$};
		\draw(n3) -- (n8) node[fill=white] at (2.33*\x,2.33*\y){\tiny $1$};
		\draw(n4) -- (n7) node[fill=white] at (2.67*\x,2.33*\y){\tiny $1$};
		\draw(n5) -- (n7) node[fill=white] at (3*\x,2.5*\y){\tiny $2$};
		\draw(n5) -- (n8) node[fill=white] at (3.5*\x,2.5*\y){\tiny $3$};
		\draw(n5) -- (n9) node[fill=white] at (4*\x,2.5*\y){\tiny $3$};
		\draw(n6) -- (n10) node[fill=white] at (1.75*\x,3.5*\y){\tiny $1$};
		\draw(n7) -- (n10) node[fill=white] at (2.25*\x,3.5*\y){\tiny $3$};
		\draw(n8) -- (n10) node[fill=white] at (2.75*\x,3.5*\y){\tiny $2$};
		\draw(n9) -- (n10) node[fill=white] at (3.25*\x,3.5*\y){\tiny $2$};
		\begin{pgfonlayer}{background}
			\draw[opacity=.5,line width=.15cm,green!70!black](n1) -- (n5) -- (n7) -- (n10);
		\end{pgfonlayer}
	\end{tikzpicture}
	\caption{The lattice $(\pe_{4},\dref)$.  The highlighted chain is \eqref{eq:good_chain}, and the labeling is the one defined in \eqref{eq:left_modular_labeling}.}
	\label{fig:pf_element_4}
\end{figure}
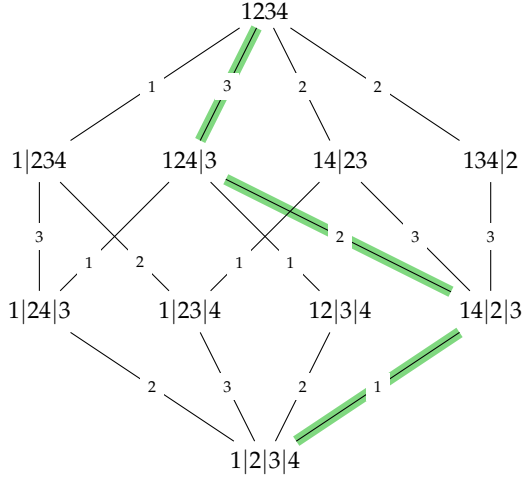

%
%

\subsection{The M{\"o}bius Function of $(\pe_{n},\dref)$}
	\label{sec:pf_element_mobius}
In this section we determine the value of the M{\"o}bius function of $(\pe_{n},\dref)$ between $\zero$ and $\one$.  Recall that the \defn{M{\"o}bius function} of a poset $\LL=(L,\leq)$ is defined recursively by
\begin{equation}\label{eq:mobius_function}
	\mu_{\LL}(x,y) = \begin{cases}1, & \text{if}\;x=y,\\-\sum_{x<z\leq y}{\mu(z,y)}, & \text{if}\;x<y,\\0, & \text{otherwise}\end{cases}
\end{equation}
for all $x,y\in L$.  It was shown in \cite{blass97mobius} that in a lattice $\LL$, we can compute the value $\mu_{\LL}(\hat{0},x)$ for any $x\in L$ by summing over the NBB bases for $x$.  Let us recall the necessary concepts.  Let $\AA$ denote the set of atoms of $\LL$, and let $\trianglelefteq$ be an arbitrary partial order on $\AA$.  A set $X\subseteq\AA$ is \defn{bounded below} (or \defn{BB} for short) if for every $d\in X$ there exists some $a\in\AA$ such that $a\triangleleft d$ and $a<\bigvee X$.  A set $X\subseteq\AA$ is \defn{NBB} if none of its nonempty subsets is BB.  If $X$ is NBB and $\bigvee X=x$, then $X$ is a \defn{NBB base} for x.  We have the following result.

\begin{theorem}[\cite{blass97mobius}*{Theorem~1.1}]\label{thm:nbb_base_mobius}
	Let $\LL=(L,\leq)$ be a finite lattice, and let $\trianglelefteq$ be any partial order on the atoms of $\LL$.  For $x\in L$ we have 
	\begin{displaymath}
		\mu_{\LL}(\hat{0},x) = \sum_{X}{(-1)^{\lvert X\rvert}},
	\end{displaymath}
	where the sum is over all NBB bases for $x$ with respect to $\trianglelefteq$. 
\end{theorem}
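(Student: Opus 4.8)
This statement is a theorem of Blass and Sagan, so the plan is to reconstruct their argument. I would consider the function $\bar{\mu}\colon L\to\mathbb{Z}$ given by $\bar{\mu}(x)=\sum_{X}(-1)^{\lvert X\rvert}$, where $X$ ranges over the NBB bases for $x$, and aim to show $\bar{\mu}(x)=\mu_{\LL}(\hat{0},x)$ for all $x\in L$. Since $\mu_{\LL}(\hat{0},-)$ is the unique function $g\colon L\to\mathbb{Z}$ with $g(\hat{0})=1$ and $\sum_{\hat{0}\leq y\leq x}g(y)=0$ for every $x>\hat{0}$ (a standard reformulation of the recursion \eqref{eq:mobius_function}), it suffices to verify these two properties for $\bar{\mu}$. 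The first is immediate: any nonempty set of atoms has join strictly above $\hat{0}$, so $\emptyset$ is the only set of atoms with join $\hat{0}$, and $\emptyset$ is vacuously NBB; hence $\bar{\mu}(\hat{0})=1$.

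For the second property, fix $x>\hat{0}$ and put $\AA_{\leq x}=\{a\in\AA\mid a\leq x\}$, which is nonempty. Because $\bigvee X\leq x$ if and only if $X\subseteq\AA_{\leq x}$, and because being NBB is an intrinsic property of a set of atoms, collecting the NBB bases of all $y\in[\hat{0},x]$ gives
\begin{displaymath}
	\sum_{\hat{0}\leq y\leq x}\bar{\mu}(y)=\sum_{\substack{X\subseteq\AA_{\leq x}\\ X\ \text{NBB}}}(-1)^{\lvert X\rvert}.
\end{displaymath}
Now I would pick any atom $a_{0}$ that is $\trianglelefteq$-minimal in $\AA_{\leq x}$ (possible since $\AA_{\leq x}$ is finite and nonempty) and establish the key claim: \emph{no nonempty BB subset of $\AA_{\leq x}$ contains $a_{0}$}. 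Indeed, if $D\subseteq\AA_{\leq x}$ were BB with $a_{0}\in D$, then the defining condition of BB, applied to $d=a_{0}$, produces an atom $a$ with $a\triangleleft a_{0}$ and $a<\bigvee D\leq\bigvee\AA_{\leq x}\leq x$; but then $a\in\AA_{\leq x}$, contradicting the $\trianglelefteq$-minimality of $a_{0}$ in $\AA_{\leq x}$.

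Granting the claim, the right-hand sum vanishes by a sign-reversing involution. For $B\subseteq\AA_{\leq x}$, every nonempty BB subset of $B$ avoids $a_{0}$ by the claim, hence is a nonempty BB subset of $B\setminus\{a_{0}\}$, and conversely; and since $(B\triangle\{a_{0}\})\setminus\{a_{0}\}=B\setminus\{a_{0}\}$, the same holds with $B$ replaced by $B\triangle\{a_{0}\}$. Thus $B$ is NBB if and only if $B\triangle\{a_{0}\}$ is NBB, so $B\mapsto B\triangle\{a_{0}\}$ is a fixed-point-free involution on the NBB subsets of $\AA_{\leq x}$ that flips the parity of $\lvert B\rvert$; therefore the sum is $0$. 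Together with $\bar{\mu}(\hat{0})=1$ and the uniqueness characterisation, this yields $\bar{\mu}=\mu_{\LL}(\hat{0},-)$, proving the theorem.

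The one genuinely non-formal step — and the step I expect to be the main obstacle — is the key claim of the third paragraph: it is exactly where the arbitrary order $\trianglelefteq$ on atoms must be tied back to the lattice order, through the fact that $\AA_{\leq x}$ is closed under passing to atoms that lie below a join of its own members. Everything else — the uniqueness characterisation of $\mu$, the reindexing, and the involution bookkeeping (including the conventions $\bigvee\emptyset=\hat{0}$ and that singletons and $\emptyset$ are always NBB) — is routine. One could instead start from Rota's crosscut theorem, which expresses $\mu_{\LL}(\hat{0},x)$ as $\sum(-1)^{\lvert X\rvert}$ over the $X\subseteq\AA_{\leq x}$ with $\bigvee X=x$, and cancel the non-NBB terms by the same $a_{0}$-toggling involution; but then one has to worry that toggling $a_{0}$ need not preserve the condition $\bigvee X=x$, so the direct route above seems preferable.
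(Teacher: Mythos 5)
The paper imports this result from Blass and Sagan without proof, so there is no internal argument to compare yours against. Your reconstruction is correct and is essentially the original Blass--Sagan proof: you characterise $\mu_{\LL}(\hat{0},\cdot)$ by the conditions $g(\hat{0})=1$ and $\sum_{\hat{0}\leq y\leq x}g(y)=0$ for $x>\hat{0}$, reindex that sum as a signed sum over the NBB subsets of $\AA_{\leq x}$, and cancel it via the parity-reversing involution $B\mapsto B\mathbin{\triangle}\{a_{0}\}$ for a $\trianglelefteq$-minimal atom $a_{0}\leq x$. The one substantive step --- that no nonempty BB subset of $\AA_{\leq x}$ contains $a_{0}$, which is what makes the toggling preserve the NBB property --- is argued correctly, and it is indeed the only point where the auxiliary order $\trianglelefteq$ has to interact with the lattice order.
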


In the remainder of this section we give a combinatorial model for the NBB bases of $\one$ in $(\pe_{n},\dref)$ with respect to a suitable partial order on its atoms, and conclude Theorem~\ref{thm:pf_element_mobius}.

For $i,j\in[n]$ with $i<j$, define $\aa_{i,j}$ to be the set partition whose unique non-singleton block is $\{i,j\}$.  The set $\AA_{n}=\{\aa_{i,j}\mid 1\leq i<j\leq n\}$ is the set of all atoms of $\{\nc_{n},\dref)$.  The set $\bar{\AA}_{n}=\AA_{n}\setminus\{\aa_{1,n-1},\aa_{n-1,n}\}$ is then the set of atoms of $(\pe_{n},\dref)$.  Consider the partition of $\AA_{n}$ given by 
\begin{displaymath}
	A_{i}=\{\aa\in\AA_{n}\mid\aa\dref\xx_{i}\;\text{and}\;\aa\not\dref\xx_{i-1}\}
\end{displaymath}
for $i\in[n-1]$.  Let $\bar{A}_{i}$ be the restriction of $A_{i}$ to $\bar{\AA}_{n}$.  Define a partial order on $\AA_{n}$ by setting $\aa\trianglelefteq\aa'$ if and only if $\aa\in A_{i}$ and $\aa'\in A_{j}$ for $i<j$.  The poset $(\AA_{5},\trianglelefteq)$ is depicted in Figure~\ref{fig:atom_order_5}.

\begin{figure}
	\centering
	\begin{tikzpicture}
		\def\x{1.25};
		\def\y{1};
		\draw(2*\x,1*\y) node(n1){$\aa_{1,5}$};
		\draw(1.5*\x,2*\y) node(n2){$\aa_{1,2}$};
		\draw(2.5*\x,2*\y) node(n3){$\aa_{2,5}$};
		\draw(1*\x,3*\y) node(n4){$\aa_{1,3}$};
		\draw(2*\x,3*\y) node(n5){$\aa_{2,3}$};
		\draw(3*\x,3*\y) node(n6){$\aa_{3,5}$};
		\draw(.5*\x,4*\y) node(n7){$\aa_{1,4}$};
		\draw(1.5*\x,4*\y) node(n8){$\aa_{2,4}$};
		\draw(2.5*\x,4*\y) node(n9){$\aa_{3,4}$};
		\draw(3.5*\x,4*\y) node(n10){$\aa_{4,5}$};
		\draw(n1) -- (n2);
		\draw(n1) -- (n3);
		\draw(n2) -- (n4);
		\draw(n2) -- (n5);
		\draw(n2) -- (n6);
		\draw(n3) -- (n4);
		\draw(n3) -- (n5);
		\draw(n3) -- (n6);
		\draw(n4) -- (n7);
		\draw(n4) -- (n8);
		\draw(n4) -- (n9);
		\draw(n4) -- (n10);
		\draw(n5) -- (n7);
		\draw(n5) -- (n8);
		\draw(n5) -- (n9);
		\draw(n5) -- (n10);
		\draw(n6) -- (n7);
		\draw(n6) -- (n8);
		\draw(n6) -- (n9);
		\draw(n6) -- (n10);
	\end{tikzpicture}
	\caption{The poset $(\AA_{5},\trianglelefteq)$.}
	\label{fig:atom_order_5}
\end{figure}

\begin{lemma}\label{lem:atom_order_blocks}
	For $j\in[n-1]$ we have $A_{j}=\bigl\{\aa_{i,j}\mid 1\leq i<j\bigr\}\cup\bigl\{\aa_{j,n}\bigr\}$.  Moreover, we have $\bar{A}_{j}=A_{j}$ for $j\in[n-2]$, and $\bar{A}_{n-1}=A_{n-1}\setminus\{\aa_{1,n-1},\aa_{n-1,n}\}$.  
\end{lemma}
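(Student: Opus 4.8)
The plan is to unwind the definitions of $A_j$ and $\xx_i$ directly. Recall that $\xx_j$ is the noncrossing partition whose unique non-singleton block is $[j-1]\cup\{n\}$, with $\xx_1=\zero$ and $\xx_n=\one$. An atom $\aa_{i,j}$ (with $1\le i<j\le n$) lies below $\xx_k$ in $(\nc_n,\dref)$ precisely when $\{i,j\}$ is contained in a block of $\xx_k$, i.e.\ precisely when both $i$ and $j$ lie in $[k-1]\cup\{n\}$. First I would record this membership criterion: for an atom $\aa_{i,j}$ with $i<j$, we have $\aa_{i,j}\dref\xx_k$ if and only if ($j\le k-1$) or ($i\le k-1$ and $j=n$). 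Using this, the set $A_j=\{\aa\mid\aa\dref\xx_j,\ \aa\not\dref\xx_{j-1}\}$ can be computed by a short case analysis on whether the larger index of the atom equals $n$ or not. If the larger index is $m<n$, then $\aa_{i,m}\dref\xx_j\iff m\le j-1$, so $\aa_{i,m}\in A_j\iff m\le j-1$ and $m\not\le j-2$, i.e.\ $m=j-1$; wait—here one must be careful with the indexing, and I would instead phrase the condition as $\aa_{i,m}\dref\xx_j\iff m<j$, giving $\aa_{i,m}\in A_j$ iff $m<j$ and $m\ge j-1$, i.e.\ $m=j-1$. Hmm, that does not match the claimed $\aa_{i,j}$; so the correct reading of the definition must be $\aa\dref\xx_j$ meaning $\{i,m\}\subseteq[j-1]\cup\{n\}$, and the cleanest route is simply to observe that $A_j$ collects exactly those atoms whose ``largest coordinate among $1,\dots,n-1$'' is $j$. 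Concretely: an atom $\aa_{i,m}$ with $m<n$ first appears below $\xx_k$ when $k-1\ge m$, i.e.\ $k=m+1$; relabelling, $\aa_{i,m}\in A_{m+1}$. And an atom $\aa_{i,n}$ first appears below $\xx_k$ when $k-1\ge i$, i.e.\ $k=i+1$; so $\aa_{i,n}\in A_{i+1}$. Reindexing by $j$, the atoms in $A_j$ are exactly $\aa_{i,j-1}$ for $1\le i<j-1$ together with $\aa_{j-1,n}$—so I should double-check the intended normalization of $\xx_i$ against Figure~\ref{fig:atom_order_5} before committing to the final indices, and then simply state $A_j=\{\aa_{i,j}\mid 1\le i<j\}\cup\{\aa_{j,n}\}$ with the matching convention.

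Once the description of $A_j$ is in hand, the second assertion is immediate. By construction $\bar\AA_n=\AA_n\setminus\{\aa_{1,n-1},\aa_{n-1,n}\}$, and $\bar A_j$ is by definition $A_j\cap\bar\AA_n$. For $j\le n-2$, neither $\aa_{1,n-1}$ nor $\aa_{n-1,n}$ belongs to $A_j$: the atom $\aa_{1,n-1}$ has larger coordinate $n-1$ and hence lies in $A_{n-1}$ (not in any $A_j$ with $j\le n-2$), and $\aa_{n-1,n}$ has the form $\aa_{i,n}$ with $i=n-1$, hence also lies in $A_{n-1}$. Therefore removing these two atoms affects only $A_{n-1}$, giving $\bar A_j=A_j$ for $j\in[n-2]$ and $\bar A_{n-1}=A_{n-1}\setminus\{\aa_{1,n-1},\aa_{n-1,n}\}$.

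The only genuine obstacle here is bookkeeping: getting the index shift between ``$\xx_i$ has non-singleton block $[i-1]\cup\{n\}$'' and ``$\aa_{i,j}\in A_j$'' exactly right, and making sure the two excluded atoms really are the ones named $\aa_{1,n-1}$ and $\aa_{n-1,n}$ in the paper's normalization. I would resolve this by cross-checking against $(\AA_5,\trianglelefteq)$ in Figure~\ref{fig:atom_order_5}: there $A_2=\{\aa_{1,2},\aa_{2,5}\}$, $A_3=\{\aa_{1,3},\aa_{2,3},\aa_{3,5}\}$, $A_4=\{\aa_{1,4},\aa_{2,4},\aa_{3,4},\aa_{4,5}\}$, which matches $A_j=\{\aa_{i,j}\mid 1\le i<j\}\cup\{\aa_{j,n}\}$ verbatim; and the two atoms removed to pass to $\bar\AA_5$ are $\aa_{1,4}$ and $\aa_{4,5}$, both in $A_4=A_{n-1}$, confirming the second claim. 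No deep idea is needed beyond this verification; everything else is a direct unpacking of the definitions of $\dref$, $\xx_i$, and $A_i$.
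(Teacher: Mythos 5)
Your proof is correct and follows essentially the same route as the paper's: a direct check of which $\xx_{k}$ each atom first lies below, split according to whether the atom's larger index equals $n$. The off-by-one you flag is a genuine inconsistency in the paper itself (its own proof asserts $\aa_{i,j}\dref\xx_{j}$, which fails under the stated convention that $\xx_{j}$ has non-singleton block $[j-1]\cup\{n\}$; the definition of $A_{i}$ should read $\aa\dref\xx_{i+1}$ and $\aa\not\dref\xx_{i}$), and your resolution via Figure~\ref{fig:atom_order_5} selects exactly the convention the rest of the paper uses.
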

\begin{proof}
	Let $\aa_{i,j}\in\AA_{n}$ for $1\leq i<j\leq n$.  If $j<n$, then $\aa_{i,j}\dref\xx_{j}$, but $\aa_{i,j}\not\dref\xx_{j-1}$.  If $j=n$, then $\aa_{i,n}\dref\xx_{i}$, but $\aa_{i,n}\not\dref\xx_{i-1}$.
\end{proof}

Since we want to consider NBB bases in the two related posets $(\nc_{n},\dref)$ and $(\pe_{n},\dref)$, we use the prefixes ``$\nc$'' and ``$\pe$'' to indicate which lattice we consider.  Theorem~\ref{thm:pf_element_lattice} implies that for $\xx,\yy\in\pe_{n}$ we always have $\xx\vee_{\nc}\yy\dref\xx\vee_{\pe}\yy$.  Therefore, if $X\subseteq\bar{\AA}_{n}$ is $\nc$-BB, then it is automatically $\pe$-BB.

\begin{lemma}\label{lem:atoms_bb_1}
	If $\aa\vee_{\Pi}\aa'$ is crossing, then $\bigl\{\aa,\aa'\}$ is $\nc$-BB.
\end{lemma}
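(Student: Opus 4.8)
The plan is to turn the hypothesis into a concrete configuration of four indices and then, for each of the two atoms, exhibit a strictly $\triangleleft$-smaller atom lying below their join. Write $\aa=\aa_{i,j}$ and $\aa'=\aa_{k,l}$ with $i<j$ and $k<l$. A set partition with at most one non-singleton block is noncrossing, so the hypothesis forces the blocks $\{i,j\}$ and $\{k,l\}$ to be disjoint, and in that case ``crossing'' means precisely that the two blocks interleave; since the claim is symmetric in $\aa$ and $\aa'$ I may assume $i<k<j<l$. Then $\aa\vee_{\Pi}\aa'$ has non-singleton blocks $\{i,j\}$ and $\{k,l\}$, and its noncrossing closure --- which by \eqref{eq:nc_join} equals $\aa\vee_{\nc}\aa'$ --- is the partition $\zz$ with unique non-singleton block $\{i,j,k,l\}$. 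So $\bigvee\{\aa,\aa'\}=\zz$, and an atom $\aa_{p,q}$ satisfies $\aa_{p,q}\dref\zz$ exactly when $\{p,q\}\subseteq\{i,j,k,l\}$, in which case the containment is strict.

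Next I would invoke Lemma~\ref{lem:atom_order_blocks}, which tells us that $\aa_{p,q}$ lies in $A_q$ when $q<n$ and in $A_p$ when $q=n$; by definition of $\trianglelefteq$ this means $\aa\triangleleft\aa'$ precisely when this ``$A$-index'' is strictly smaller for $\aa$ than for $\aa'$. Since $i<k<j<l\le n$, the numbers $i,k,j$ are all $<n$, so $\aa_{i,j}\in A_j$ and $\aa_{i,k}\in A_k$. Because $k<j$ and $\{i,k\}\subseteq\{i,j,k,l\}$, the atom $\aa_{i,k}$ satisfies $\aa_{i,k}\triangleleft\aa$ and $\aa_{i,k}\dref\aa\vee_{\nc}\aa'$ --- this is the required witness for the atom $\aa$.

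For $\aa'=\aa_{k,l}$ I would split on whether $l<n$. If $l<n$, then $\aa_{k,l}\in A_l$ and $\aa_{i,k}\in A_k$ with $k<l$, so the same atom $\aa_{i,k}$ serves as the witness. If $l=n$, then $\aa_{k,n}\in A_k$, and instead I would take $\aa_{i,n}\in A_i$: since $i<k$ and $\{i,n\}\subseteq\{i,j,k,n\}$ we get $\aa_{i,n}\triangleleft\aa_{k,n}$ and $\aa_{i,n}\dref\aa\vee_{\nc}\aa'$. In either case each of $\aa$ and $\aa'$ admits a strictly $\triangleleft$-smaller atom lying below $\bigvee\{\aa,\aa'\}$, which is exactly the defining condition for $\{\aa,\aa'\}$ to be $\nc$-BB.

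The argument is essentially bookkeeping with the chain of inequalities $i<k<j<l$; the only point requiring attention --- and the one place where a careless version fails --- is the convention that an atom $\aa_{p,n}$ belongs to $A_p$ rather than to $A_n$. This is precisely why $\aa_{i,k}$ no longer works as a witness for $\aa_{k,n}$ when $l=n$ (both then lie in $A_k$), which is what necessitates the small case distinction. I do not expect any deeper obstacle.
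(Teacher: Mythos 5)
Your proof is correct and takes essentially the same route as the paper's: normalize to $i<k<j<l$, compute the join, use Lemma~\ref{lem:atom_order_blocks} to place each atom in its class $A_{m}$, and split on whether $l=n$ to produce, for each of the two atoms, a $\trianglelefteq$-smaller atom strictly below the join. The only (immaterial) difference is the choice of witnesses: the paper uses $\aa_{i,j}$ itself as the witness for $\aa_{k,l}$ when $l<n$ and $\aa_{k,n}$ as the witness for $\aa_{i,j}$ when $l=n$, whereas you use $\aa_{i,k}$ in both of those roles.
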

\begin{proof}
	Let $\aa_{i,j},\aa_{k,l}\in\AA_{n}$.  If $\aa_{i,j}\vee_{\Pi}\aa_{k,l}$ is crossing, then $i<k<j<l$, and the join $\aa_{i,j}\vee_{\nc}\aa_{k,l}$ has the unique non-singleton block $\{i,j,k,l\}$.  We distinguish two cases.  
	
	(i) If $l<n$, then Lemma~\ref{lem:atom_order_blocks} implies $\aa_{i,j}\in A_{j}$ and $\aa_{k,l}\in A_{l}$.  Since $j<l$ we obtain $\aa_{i,j}\triangleleft\aa_{k,l}$, and since $k<j$, Lemma~\ref{lem:atom_order_blocks} implies that $\aa_{i,k}\triangleleft\aa_{i,j}$.  We clearly have $\aa_{i,k}<_{\text{\normalfont dref}}\aa_{i,j}\vee_{\nc}\aa_{k,l}$, which implies that $\{\aa_{i,j},\aa_{k,l}\}$ is $\nc$-BB.  
	
	(ii) If $l=n$, then Lemma~\ref{lem:atom_order_blocks} implies $\aa_{i,j}\in A_{j}$ and $\aa_{k,n}\in A_{k}$.  Since $k<j$ we obtain $\aa_{k,n}\triangleleft\aa_{i,j}$, and since $i<k$, Lemma~\ref{lem:atom_order_blocks} implies that $\aa_{i,n}\triangleleft\aa_{k,n}$.  We clearly have $\aa_{i,n}<_{\text{\normalfont dref}}\aa_{i,j}\vee_{\pe}\aa_{k,n}$, which implies that $\{\aa_{i,j},\aa_{k,n}\}$ is $\nc$-BB.
\end{proof}

\begin{lemma}\label{lem:atoms_bb_2}
	If $\aa,\aa'\in A_{j}$ for $j\in[n-1]$, then $\{\aa,\aa'\}$ is $\nc$-BB.
\end{lemma}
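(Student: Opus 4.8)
The plan is to turn the statement into a short, completely explicit case analysis driven by the description of $A_j$ supplied by Lemma~\ref{lem:atom_order_blocks}. By that lemma $A_j=\{\aa_{i,j}\mid 1\le i<j\}\cup\{\aa_{j,n}\}$, so any two \emph{distinct} atoms $\aa,\aa'\in A_j$ (the degenerate case $\aa=\aa'$ being vacuous) have one of exactly two shapes: \emph{(a)} $\aa=\aa_{i,j}$ and $\aa'=\aa_{i',j}$ with $1\le i<i'<j$, or \emph{(b)} $\aa=\aa_{i,j}$ and $\aa'=\aa_{j,n}$ with $1\le i<j$. In each case the formula for $\vee_{\nc}$ of two atoms shows that $\aa\vee_{\nc}\aa'$ has a single non-singleton block $B$, equal to $\{i,i',j\}$ in case (a) and to $\{i,j,n\}$ in case (b). To prove that $\{\aa,\aa'\}$ is $\nc$-BB it then suffices to exhibit one atom $a\in\AA_n$ with $a\triangleleft\aa$, $a\triangleleft\aa'$, and $a<_{\text{\normalfont dref}}\aa\vee_{\nc}\aa'$: such an $a$ simultaneously witnesses the defining condition of a BB set for both $d=\aa$ and $d=\aa'$.

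In case (a) I would take $a=\aa_{i,i'}$. Since $i'<j\le n-1$, Lemma~\ref{lem:atom_order_blocks} puts $\aa_{i,i'}$ into $A_{i'}$, and as $i'<j$ this gives $\aa_{i,i'}\triangleleft\aa_{i,j}$ and $\aa_{i,i'}\triangleleft\aa_{i',j}$; moreover $\{i,i'\}\subseteq\{i,i',j\}$ gives $\aa_{i,i'}<_{\text{\normalfont dref}}\aa\vee_{\nc}\aa'$. In case (b) I would take $a=\aa_{i,n}$, which lies in $A_i$ by Lemma~\ref{lem:atom_order_blocks}; since $i<j$ this gives $\aa_{i,n}\triangleleft\aa_{i,j}$ and $\aa_{i,n}\triangleleft\aa_{j,n}$, while $\{i,n\}\subseteq\{i,j,n\}$ gives $\aa_{i,n}<_{\text{\normalfont dref}}\aa\vee_{\nc}\aa'$. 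Either way $\{\aa,\aa'\}$ is $\nc$-BB, which is the assertion.

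The only point requiring any care — hence the main (and fairly mild) obstacle — is bookkeeping: one must keep in mind that a part $A_k$ contains \emph{both} the atoms $\aa_{i,k}$ with $i<k$ and the single atom $\aa_{k,n}$, so that whether $a\triangleleft\aa$ holds is governed by which part Lemma~\ref{lem:atom_order_blocks} assigns to $a$, not by a naive comparison of subscripts. Once this is kept straight the argument is immediate, since in both cases the chosen witness $a$ lies in a part $A_k$ with $k<j$ and has a two-element block contained in the three-element block $B$ of $\aa\vee_{\nc}\aa'$.
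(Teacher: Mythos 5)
Your proposal is correct and follows essentially the same route as the paper: both use the description of $A_j$ from Lemma~\ref{lem:atom_order_blocks} to reduce to the two cases $\{\aa_{i,j},\aa_{i',j}\}$ with $i<i'<j$ and $\{\aa_{i,j},\aa_{j,n}\}$, and both exhibit the identical witnesses $\aa_{i,i'}\in A_{i'}$ and $\aa_{i,n}\in A_i$, respectively, which lie strictly below the join and strictly below both atoms in $(\AA_n,\trianglelefteq)$. Your explicit remark that a single witness serves both elements of the pair, and your caution about membership in $A_k$ being determined by Lemma~\ref{lem:atom_order_blocks} rather than by the raw subscripts, are exactly the points the paper's proof relies on implicitly.
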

\begin{proof}
	Let $\aa_{i,j},\aa_{k,j}\in A_{j}$.  Note that $\aa_{i,j}\vee_{\nc}\aa_{k,j}$ has the unique non-singleton block $\{i,k,j\}$.  There are again two cases.
	
	(i) If $i<j$ and $k<j$, then Lemma~\ref{lem:atom_order_blocks} implies $\aa_{i,k}\in A_{k}$, and thus $\aa_{i,k}\triangleleft\aa_{i,j}$ and $\aa_{i,k}\triangleleft\aa_{k,j}$.  We clearly have $\aa_{i,k}<_{\text{\normalfont dref}}\aa_{i,j}\vee_{\nc}\aa_{k,j}$, which implies that $\{\aa_{i,j},\aa_{k,j}\}$ is $\nc$-BB.
	
	(ii) If $i<j$ and $k>j$.  Lemma~\ref{lem:atom_order_blocks} implies that $k=n$, and that $\aa_{i,n}\in A_{i}$.  Therefore $\aa_{i,n}\triangleleft\aa_{i,j}$ and $\aa_{i,n}\triangleleft\aa_{j,n}$.  We clearly have $\aa_{i,n}<_{\text{\normalfont dref}}\aa_{i,j}\vee_{\nc}\aa_{j,n}$, which implies that $\{\aa_{i,j},\aa_{j,n}\}$ is $\nc$-BB.
\end{proof}

\begin{lemma}\label{lem:atoms_bb_3}
	Let $X\subseteq\bar{\AA}_{n}$ satisfy $\bigvee_{\pe}X=\one$.  If $\lvert X\rvert<n-1$, then $X$ is $\pe$-BB.
\end{lemma}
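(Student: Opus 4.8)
The plan is to exploit the atom $\aa_{1,n}$, which by Lemma~\ref{lem:atom_order_blocks} is the unique $\trianglelefteq$-minimal atom of $(\pe_{n},\dref)$: it is the only member of $\bar{A}_{1}$, and it sits at the bottom of Figure~\ref{fig:atom_order_5}. Because $\bigvee_{\pe}X=\one$, every atom of $(\pe_{n},\dref)$ lies strictly below $\bigvee_{\pe}X$, so the condition ``$a<\bigvee X$'' in the definition of BB is automatic, and the only thing left to secure is an atom $\triangleleft$-below the members of $X$. The clean consequence I would record first is this: if $\aa_{1,n}\notin X$, then every $d\in X$ lies in some $\bar{A}_{j}$ with $j\geq 2$, whence $\aa_{1,n}\triangleleft d$, and moreover $\aa_{1,n}<_{\text{\normalfont dref}}\one=\bigvee_{\pe}X$; thus $\aa_{1,n}$ bounds $X$ below and $X$ is $\pe$-BB. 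This settles the statement except in the case $\aa_{1,n}\in X$.

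For that remaining case I would argue through the noncrossing join. Since $\aa_{1,n}\in X$ contributes the pair $\{1,n\}$, we have $1\sim n$ in $\bigvee_{\nc}X$; hence $\{n\}$ is not a singleton block and $\bigvee_{\nc}X$ lies in neither $L_{1}$ nor $L_{2}$, so by the proof of Theorem~\ref{thm:pf_element_lattice} we get $\bigvee_{\pe}X=\bigvee_{\nc}X=\one$. Now view $X$ as a set of fewer than $n-1$ edges on the vertex set $[n]$. Such a graph is disconnected, so the partition into connected components (which is exactly $\bigvee_{\Pi}X$) has at least two blocks, while its noncrossing closure is $\one$. Therefore two of these blocks must cross, and tracing the two paths that join the interleaving vertices produces a pair of edges $\aa,\aa'\in X$ for which $\aa\vee_{\Pi}\aa'$ is crossing. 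By Lemma~\ref{lem:atoms_bb_1} the pair $\{\aa,\aa'\}$ is $\nc$-BB, and hence $\pe$-BB.

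Combining the cases, $X$ always carries a nonempty $\pe$-BB subset: the whole of $X$ when $\aa_{1,n}\notin X$, and the crossing pair $\{\aa,\aa'\}$ when $\aa_{1,n}\in X$ (note that in the latter case $X$ cannot itself be $\pe$-BB, precisely because $\aa_{1,n}$ is $\trianglelefteq$-minimal, so it is this subset statement that is available and that is used). This is exactly the property needed in the sequel: it prevents $X$ from being a $\pe$-NBB base, forcing every $\pe$-NBB base of $\one$ to use at least $n-1$ atoms when evaluated through Theorem~\ref{thm:nbb_base_mobius}. The step I expect to be the main obstacle is the extraction in the case $\aa_{1,n}\in X$, namely upgrading ``two connected components cross'' to ``two individual edges of $X$ cross''. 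A naive rank count is of no help, since the noncrossing closure is not rank-subadditive—a single crossing pair already closes up to a four-element block—so one must genuinely produce the crossing edges by a planarity/interleaving argument, choosing along each connecting path an edge that straddles a fixed index separating the interleaving quadruple.
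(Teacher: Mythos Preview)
You have correctly spotted that the lemma as stated is false when $\aa_{1,n}\in X$: for $n=5$ the set $X=\{\aa_{1,5},\aa_{3,5},\aa_{2,4}\}\subseteq\bar{\AA}_{5}$ has $\bigvee_{\pe}X=\one$ and $\lvert X\rvert=3<4$, but $X$ is \emph{not} $\pe$-BB, since $\aa_{1,5}$ is $\trianglelefteq$-minimal and admits no atom strictly $\trianglelefteq$-below it.  Your parenthetical remark in the final paragraph is therefore exactly right, and your weakened conclusion ``$X$ contains a nonempty $\pe$-BB subset'' is the correct replacement and is what the application in Corollary~\ref{cor:nbb_bases_size} actually uses.

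The paper sidesteps this issue differently.  Its proof begins with the clause ``if $X$ is a set of pairwise non-crossing atoms'', and the remainder of the argument is carried out only under that tacit hypothesis.  This is harmless in context, because by the time Lemma~\ref{lem:atoms_bb_3} is invoked in Corollary~\ref{cor:nbb_bases_size}, Lemmas~\ref{lem:atoms_bb_1} and~\ref{lem:atoms_bb_2} have already disposed of any $X$ containing a crossing pair.  Under that assumption the paper's route is short: pairwise noncrossing plus $\lvert X\rvert<n-1$ forces $\bigvee_{\nc}X\neq\one$, so $\bigvee_{\nc}X\in L_{2}$ (by the description of $\vee_{\pe}$ in Theorem~\ref{thm:pf_element_lattice}); hence $\{n\}$ is a singleton block of $\bigvee_{\nc}X$, which rules out $\aa_{1,n}\in X$.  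From there the paper finishes exactly as in your Case~1.

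So your Case~1 is the same endgame as the paper's, reached more directly (and in fact your argument there does not even use the bound $\lvert X\rvert<n-1$).  Your Case~2 is genuinely different: rather than assuming pairwise noncrossing, you extract a crossing pair via a planarity argument.  That argument can be made rigorous---placing $[n]$ on a circle, the two vertex-disjoint edge-paths joining an interleaving quadruple must intersect in the interior of the disk by a Jordan-curve argument, and the intersection point lies on two crossing chords---but it is more work than simply adding the pairwise-noncrossing hypothesis to the lemma, which is the cheaper fix and is all that Corollary~\ref{cor:nbb_bases_size} needs.
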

\begin{proof}
	Suppose that $\lvert X\rvert=k$.  Observe that if $X$ is a set of pairwise non-crossing atoms, then $\bigvee_{\nc}X=\bigvee_{\Pi}X$.  By \eqref{eq:partition_join} $\bigvee_{\Pi}X$ has exactly $n-k$ blocks.  Moreover, by Theorem~\ref{thm:pf_element_lattice} the number of blocks of $\bigvee_{\pe}X$ is either $n-k$ or $n-k-1$.  Since we assumed $\bigvee_{\pe}X=\one$, we conclude that $k\in\{n-2,n-1\}$.  Let $\zz=\bigvee_{\nc}X$.

	If $k=n-2$, then we conclude that $1\sim_{\zz}n-1$, and $\{n\}$ is a block of $\zz$.  It follows that $\aa_{1,n}\notin X$, which in view of Lemma~\ref{lem:atom_order_blocks} implies $\aa_{1,n}\triangleleft\aa$ for all $\aa\in X$.  Since $\aa_{1,n}<_{\text{\normalfont dref}}\one$, we conclude that $X$ is $\pe$-BB.
\end{proof}

Let us denote by $\BB_{n}$ the set of all $\nc$-NBB bases for $\one$, and let $\bar{\BB}_{n}$ denote the set of all $\pe$-NBB bases for $\one$.  By construction we have $\bar{\BB}_{n}\subseteq \BB_{n}$.  

\begin{corollary}\label{cor:nbb_bases_size}
	Every element of $\BB_{n}$ has cardinality $n-1$.  Consequently the same is true for the elements of $\bar{\BB}_{n}$.
\end{corollary}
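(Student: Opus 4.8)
The plan is to pin down $\lvert X\rvert=n-1$ for an arbitrary $X\in\BB_n$ by establishing the two inequalities $\lvert X\rvert\le n-1$ and $\lvert X\rvert\ge n-1$ separately, and then to obtain the statement for $\bar{\BB}_n$ for free from the inclusion $\bar{\BB}_n\subseteq\BB_n$ recorded just before the statement.  So fix $X\in\BB_n$; by definition $X\subseteq\AA_n$ is $\nc$-NBB and $\bigvee_{\nc}X=\one$.

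For the upper bound I would combine Lemma~\ref{lem:atoms_bb_2} with Lemma~\ref{lem:atom_order_blocks}.  The latter shows that $A_1,A_2,\ldots,A_{n-1}$ partition $\AA_n$: the atom $\aa_{i,j}$ (with $i<j$) lies in $A_j$ when $j<n$ and in $A_i$ when $j=n$.  If $X$ contained two atoms from a common block $A_j$ with $j\in[n-1]$, then Lemma~\ref{lem:atoms_bb_2} would supply a two-element $\nc$-BB subset of $X$, contradicting that $X$ is $\nc$-NBB.  Hence $X$ meets each $A_j$ in at most one atom, so $\lvert X\rvert\le n-1$.

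For the lower bound I would first invoke Lemma~\ref{lem:atoms_bb_1}: since $X$ is $\nc$-NBB, no two of its atoms may have a crossing $\Pi$-join, which---identifying each atom $\aa_{i,j}$ with the arc $\{i,j\}$---says precisely that the arcs in $X$ are pairwise non-crossing.  By the observation made in the proof of Lemma~\ref{lem:atoms_bb_3} we then have $\bigvee_{\nc}X=\bigvee_{\Pi}X$, and by \eqref{eq:partition_join} this join is the partition of $[n]$ into the connected components of the graph with vertex set $[n]$ and edge set $X$.  Since $\bigvee_{\nc}X=\one$, this graph is connected, and a connected graph on $n$ vertices has at least $n-1$ edges, so $\lvert X\rvert\ge n-1$.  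Combining the two bounds gives $\lvert X\rvert=n-1$, and since $\bar{\BB}_n\subseteq\BB_n$ the same holds for every element of $\bar{\BB}_n$.

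The step that I expect to require the most care is the lower bound, and in particular the use of the identity $\bigvee_{\nc}X=\bigvee_{\Pi}X$: the whole argument hinges on the fact that for a pairwise non-crossing family of atoms the noncrossing closure does nothing, so that ``$\bigvee_{\nc}X=\one$'' genuinely forces the arc graph to be connected rather than merely forcing its noncrossing closure to collapse several disjoint components.  Everything else is straightforward bookkeeping with Lemmas~\ref{lem:atom_order_blocks}, \ref{lem:atoms_bb_1}, and~\ref{lem:atoms_bb_2}.
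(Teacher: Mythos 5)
Your argument is correct and is essentially the paper's own (very terse) proof spelled out: the upper bound $\lvert X\rvert\leq n-1$ comes from Lemma~\ref{lem:atoms_bb_2} together with the partition $A_{1},\ldots,A_{n-1}$ of the atoms from Lemma~\ref{lem:atom_order_blocks}, and the lower bound from Lemma~\ref{lem:atoms_bb_1}, the observation that a pairwise noncrossing family of arcs has noncrossing $\Pi$-join, and the connectivity of the arc graph via \eqref{eq:partition_join}. The only point worth flagging is that the inclusion $\bar{\BB}_{n}\subseteq\BB_{n}$ you invoke for the second claim, though asserted in the paper just before the corollary, itself silently relies on Lemma~\ref{lem:atoms_bb_3} (one must rule out $X$ with $\bigvee_{\pe}X=\one$ but $\bigvee_{\nc}X\neq\one$), which is why the paper's proof of that half cites Lemmas~\ref{lem:atoms_bb_1}--\ref{lem:atoms_bb_3} rather than the inclusion alone.
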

\begin{proof}
	The claim for the cardinality of the elements in $\BB_{n}$ follows directly from \eqref{eq:nc_join} and Lemmas~\ref{lem:atoms_bb_1} and \ref{lem:atoms_bb_2}.  
	
	The claim for the cardinality of the elements in $\bar{\BB}_{n}$ can be verified directly using Lemmas~\ref{lem:atoms_bb_1}--\ref{lem:atoms_bb_3}.
\end{proof}

For the moment, let us focus on the elements of $\BB_{n}$.  In view of Corollary~\ref{cor:nbb_bases_size} these elements are certain maximal chains of $(\AA_{n},\trianglelefteq)$.  We can naturally associate a graph with $X\in\BB_{n}$ by connecting the vertices $i$ and $j$ if and only if $\aa_{i,j}\in X$.  Denote the resulting graph by $\tau(X)$.

\begin{lemma}\label{lem:nbb_base_tree}
	If $X\in\BB_{n}$, then $\tau(X)$ is a tree.
\end{lemma}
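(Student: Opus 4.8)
The plan is to show that $\tau(X)$ is connected; since $X\in\BB_{n}$ has cardinality $n-1$ by Corollary~\ref{cor:nbb_bases_size}, and distinct atoms $\aa_{i,j}$ give distinct edges, $\tau(X)$ is a graph on the $n$ vertices $[n]$ with exactly $n-1$ edges, and a connected graph on $n$ vertices with $n-1$ edges is a tree.

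First I would record the structure forced on $X$ by the NBB condition. By Lemma~\ref{lem:atoms_bb_2}, any two atoms lying in the same block $A_{j}$ of the partition $\{A_{1},\ldots,A_{n-1}\}$ of $\AA_{n}$ form an $\nc$-BB pair, so an NBB set contains at most one atom from each $A_{j}$, i.e. $\lvert X\cap A_{j}\rvert\le 1$ for all $j\in[n-1]$. As the $A_{j}$ partition $\AA_{n}$ into exactly $n-1$ blocks and $\lvert X\rvert=n-1$, equality must hold: $X$ contains exactly one atom from each $A_{j}$. Now use the description of the blocks $A_{j}$ from Lemma~\ref{lem:atom_order_blocks}. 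Since $A_{1}=\{\aa_{1,n}\}$, the edge $\{1,n\}$ lies in $\tau(X)$. For $j\in\{2,\ldots,n-1\}$, the unique atom of $X$ in $A_{j}$ is either $\aa_{i,j}$ with $1\le i<j$, or $\aa_{j,n}$; in either case $\tau(X)$ has an edge joining the vertex $j$ to some vertex of $\{1,2,\ldots,j-1\}\cup\{n\}$. An induction on $j$ then shows that every vertex $j\in[n-1]$ lies in the component of $\tau(X)$ containing $n$: for $j=1$ this is witnessed by the edge $\{1,n\}$, and for $j\ge 2$ the edge above joins $j$ either directly to $n$ or to some $i<j$, which is already in that component by the inductive hypothesis. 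Hence $\tau(X)$ is connected, and therefore a tree.

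I do not expect a serious obstacle here. The two points that need care are: (a) that Lemma~\ref{lem:atoms_bb_2} together with the NBB property and the count $\lvert X\rvert=n-1=$ (number of blocks $A_{j}$) really pins down $X$ as a transversal of $\{A_{1},\ldots,A_{n-1}\}$; and (b) that for $j$ in the interior range the chosen atom is always incident to $j$ and "reaches back" to a smaller vertex or to $n$, which is exactly what makes the connectivity induction close up. An alternative route, which I would mention but not pursue, is to observe via Lemma~\ref{lem:atoms_bb_1} that the edges of $\tau(X)$ are pairwise noncrossing, so $\bigvee_{\nc}X=\bigvee_{\Pi}X$ is the partition of $[n]$ into the vertex sets of the components of $\tau(X)$; since this equals $\one$, the graph is connected. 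This is shorter but relies on the (standard, though not entirely trivial) fact that pairwise noncrossing chords yield a noncrossing component partition, so the direct induction seems cleaner.
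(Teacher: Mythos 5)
Your proof is correct, and it takes a genuinely different route from the paper's. The paper argues connectivity directly from $\bigvee_{\nc}X=\one$ via the join formula \eqref{eq:partition_join}, and then rules out cycles by hand: given a cycle, it locates two of its atoms in a common block $A_{j}$ and invokes Lemma~\ref{lem:atoms_bb_2} to produce a BB pair. You instead combine Lemma~\ref{lem:atoms_bb_2} with the cardinality statement of Corollary~\ref{cor:nbb_bases_size} to conclude that $X$ is an exact transversal of $\{A_{1},\ldots,A_{n-1}\}$, prove connectivity by the ``each $j$ reaches back to $\{1,\ldots,j-1\}\cup\{n\}$'' induction coming from Lemma~\ref{lem:atom_order_blocks}, and then get acyclicity for free from the count (connected, $n$ vertices, $n-1$ edges). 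This is legitimate --- Corollary~\ref{cor:nbb_bases_size} precedes the lemma and its proof does not depend on it, so there is no circularity --- and it has two small advantages: it makes explicit the transversal structure that the paper only alludes to afterwards (``these elements are certain maximal chains of $(\AA_{n},\trianglelefteq)$''), and your connectivity argument sidesteps a subtlety in the paper's, namely that $\bigvee_{\nc}X=\one$ alone does not force $\tau(X)$ to be connected (the $\nc$-join is the noncrossing \emph{closure} of the partition join), so the paper implicitly also needs the pairwise-noncrossing conclusion of Lemma~\ref{lem:atoms_bb_1} there. What the paper's version buys in exchange is independence from the cardinality corollary: its cycle argument would still show acyclicity even without knowing $\lvert X\rvert=n-1$ in advance.
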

\begin{proof}
	Since ${\bigvee}_{\nc}X=\one$ it follows from \eqref{eq:partition_join} that $\tau(X)$ is connected.  Now suppose that $\tau(X)$ contains a cycle $C=(\aa_{i_{1},i_{2}},\aa_{i_{2},i_{3}},\ldots,\aa_{i_{s},i_{1}})$.  We then have $i_{1}<i_{2}<\cdots<i_{s}$, and $3\leq s<n$.  
	
	If $i_{s}<n$, then $\aa_{i_{s-1},i_{s}},\aa_{i_{1},i_{s}}\in A_{i_{s}}$, which contradicts Lemma~\ref{lem:atoms_bb_2}.  If $i_{s}=n$, then $\aa_{i_{s-2},i_{s-1}},\aa_{i_{s-1},i_{s}}\in A_{i_{s-1}}$, which contradicts Lemma~\ref{lem:atoms_bb_2}.
\end{proof}

Since $\aa_{1,n}$ is the least element in $(\AA_{n},\trianglelefteq)$ any of the trees in Lemma~\ref{lem:nbb_base_tree} contains an edge between $1$ and $n$.  

\begin{lemma}\label{lem:tree_separation}
	Let $X\in\BB_{n}$, and let $\tau(X)$ be the corresponding tree.  If we remove the edge between $1$ and $n$, we obtain two trees $\tau_{1}$ and $\tau_{2}$, where $\tau_{1}$ has vertex set $[k]$ and $\tau_{2}$ has vertex set $\{k+1,k+2,\ldots,n\}$ for some $k\in[n-1]$.  
\end{lemma}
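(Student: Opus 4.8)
The plan is to read off the vertex structure of $\tau(X)$ from the non-crossing condition, using nothing beyond what is already available. Recall that $\tau(X)$ is a tree (Lemma~\ref{lem:nbb_base_tree}) and that it contains the edge $\{1,n\}$ (noted just before the statement, since $\aa_{1,n}$ is the $\trianglelefteq$-minimum and $X$ is NBB). Deleting any edge of a tree leaves exactly two subtrees, so $\tau_{1},\tau_{2}$ are well-defined; let $S_{i}$ be the vertex set of $\tau_{i}$ and arrange $1\in S_{1}$, $n\in S_{2}$. Putting $k=\lvert S_{1}\rvert$, the conditions $1\in S_{1}$ and $n\in S_{2}$ already give $1\le k\le n-1$, so the entire content of the lemma is the claim that $S_{1}=[k]$, i.e.\ that there is no pair $a\in S_{2}$, $b\in S_{1}$ with $a<b$.

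The mechanism I would use is that $X$ contains no two crossing atoms: if $\aa,\aa'\in X$ had $\aa\vee_{\Pi}\aa'$ crossing, then $\{\aa,\aa'\}$ would be $\nc$-BB by Lemma~\ref{lem:atoms_bb_1}, contradicting that $X$ is $\nc$-NBB. So, drawing $1,2,\dots,n$ on a circle in this cyclic order and each atom $\aa_{i,j}\in X$ as the chord joining $i$ and $j$, the chords of $\tau(X)$ are pairwise non-crossing. I would also record the elementary fact that, since $\tau_{1}$ and $\tau_{2}$ are connected subgraphs of the tree $\tau(X)$, the unique $\tau(X)$-path between two vertices of $S_{1}$ stays inside $\tau_{1}$, and likewise for $S_{2}$.

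Now suppose for contradiction that $a\in S_{2}$ and $b\in S_{1}$ satisfy $a<b$. Since $b\in S_{1}$ and $n\in S_{2}$ we have $b\ne n$, hence $a<b<n$ with $a,n\in S_{2}$ and $b\in S_{1}$. Let $P$ be the $\tau(X)$-path from $a$ to $n$; by the fact just recorded, $P$ lies inside $\tau_{2}$, so all its vertices lie in $S_{2}$ and in particular none equals $b$. Travelling along $P$ from $a$ (which is $<b$) to $n$ (which is $>b$), some edge $\{p,q\}$ of $P$ therefore satisfies $p<b<q$. This chord partitions the remaining vertices into the class strictly between $p$ and $q$ — which contains $b$ — and the complementary class — which contains $1$, because $1\in S_{1}$ forces $1<p$. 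Finally let $Q$ be the $\tau(X)$-path from $1$ to $b$; it lies inside $\tau_{1}$, so all its vertices lie in $S_{1}$ and none equals $p$ or $q$. As $Q$ runs from $1$ to $b$, which lie in the two different classes cut out by $\{p,q\}$, some edge $\{x,y\}$ of $Q$ has exactly one endpoint strictly between $p$ and $q$; then the chords $\{x,y\}$ and $\{p,q\}$ cross, and both are edges of $\tau(X)$ — contradicting the non-crossing property. Hence no such pair exists, so $S_{1}=[k]$ and $S_{2}=\{k+1,\dots,n\}$.

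The step I expect to be the main obstacle is the circular bookkeeping: one has to be careful that $P$ and $Q$ really do stay inside their respective subtrees (which is exactly the observation that $\{1,n\}$ is the only edge joining $S_{1}$ to $S_{2}$), and that the ``transition'' edges $\{p,q\}$ and $\{x,y\}$ genuinely exist — this is where $S_{1}\cap S_{2}=\emptyset$ is used, guaranteeing that no vertex of $P$ can coincide with $b$ and no vertex of $Q$ can coincide with $p$ or $q$, so that every vertex of these paths sits strictly on one side of the relevant chord. Once those finite checks are in place, the crossing of $\{x,y\}$ with $\{p,q\}$ is immediate from $p<b<q$ together with $x,y$ lying on opposite sides.
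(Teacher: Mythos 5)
Your proof is correct and follows essentially the same route as the paper: assume the separation fails and produce two edges of $\tau(X)$ that cross as chords, contradicting Lemma~\ref{lem:atoms_bb_1} together with the NBB property. The only difference is bookkeeping --- the paper locates the crossing pair via a minimal offending vertex and its predecessor on the path from $1$, while you use a discrete intermediate-value sweep along the two paths to $1$ and to $n$, which is arguably a cleaner way to organize the same argument.
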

\begin{proof}
	Suppose that $\tau_{1}$ and $\tau_{2}$ are the two trees obtained by removing the edge connecting $1$ and $n$ in $\tau(X)$.  The claim is certainly true for $n\leq 3$, so suppose that $n>3$.  Assume that there is a vertex $k$ in $\tau_{1}$ such that there exists $i\in[k-1]$ which is a vertex of $\tau_{2}$, and choose $k$ minimal with this property.  Since $\tau_{1}$ is a tree, there is a unique path from $1$ to $k$, and let $k'$ be the predecessor of $k$ along this path.  It follows that $\aa_{k',k}\in X$, and thus $k'<k$.  The minimality of $k$ implies that there is $l$ in $\{k'+1,k'+2,\ldots,k-1\}$ which is a vertex of $\tau_{2}$.  Let $l=l_{0}<l_{1}<\cdots<l_{s}=n$ denote the elements (in order) on the unique path from $l$ to $n$ in $\tau_{2}$.  Again by construction we have $\aa_{l_{i-1},l_{i}}\in X$ for $i\in[s]$.  Moreover, there exists a unique index $i\in[s]$ such that $l_{i-1}<k$ and $l_{i}>k$.  Then, however, Lemma~\ref{lem:atoms_bb_1} implies that $\{\aa_{k',k},\aa_{l_{i-1},l_{i}}\}$ is $\nc$-BB, which contradicts the fact that $X$ is an $\nc$-NBB base for $\one$.  This completes the proof.
\end{proof}

We say that the trees occurring as $\tau(X)$ for some $X\in\BB_{n}$ are \defn{noncrossing}.  Recall that the \defn{Catalan numbers} are defined by $\cat(n)=\tfrac{1}{n+1}\tbinom{2n}{n}$, and they satisfy the recurrence relation 
\begin{equation}\label{eq:catalan_recursion}
	\cat(n+1)=\sum_{k=0}^{n}{\cat(k)\cat(n-k)},
\end{equation}
with initial condition $\cat(0)=1$~\cite{segner61enumeratio}. 

\begin{corollary}\label{cor:tree_catalan}
	For $n\geq 1$ we have $\bigl\lvert\BB_{n}\bigr\rvert=\cat(n-1)$.
\end{corollary}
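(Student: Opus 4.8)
The plan is to establish a bijection between $\BB_n$ (equivalently, the noncrossing trees on vertex set $[n]$ containing the edge $\{1,n\}$) and ordered pairs of smaller noncrossing trees, and then recognize the resulting recurrence as the Catalan recurrence \eqref{eq:catalan_recursion}. First I would record the base cases: for $n=1$ the empty tree works and $\cat(0)=1$; for $n=2$ the single edge $\{1,2\}=\aa_{1,2}$ is the unique $\nc$-NBB base for $\one$ and $\cat(1)=1$. (One should double-check that $\bigvee_\nc\{\aa_{1,2}\}=\one$ in $\nc_2$ and that the singleton is trivially NBB, which is immediate.)

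For $n\geq 3$, the key structural input is Lemma~\ref{lem:tree_separation}: every $X\in\BB_n$ gives a tree $\tau(X)$ containing the edge $\{1,n\}$, and deleting that edge splits $\tau(X)$ into a tree $\tau_1$ on $[k]$ and a tree $\tau_2$ on $\{k+1,\ldots,n\}$ for some $k\in[n-1]$. I would argue that $\tau_1$ is itself a noncrossing tree on $[k]$ in the sense of the paper — i.e.\ it equals $\tau(X_1)$ for some $X_1\in\BB_k$ — and similarly $\tau_2$ (after relabelling $\{k+1,\ldots,n\}$ to $[n-k]$ in an order-preserving way) is $\tau(X_2)$ for some $X_2\in\BB_{n-k}$. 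The point here is that $\tau_1$ connects $[k]$ and so corresponds to an atom set $X_1$ with $\bigvee_\Pi X_1$ having a single non-singleton block $[k]$, hence $\bigvee_\nc X_1=\hat 1$ in $\nc_k$; and the $\nc$-NBB property of $X_1$ relative to the atom order on $\AA_k$ is exactly the restriction of the $\nc$-NBB property of $X$, because the atom order $\trianglelefteq$ on $\AA_n$ restricts compatibly (the blocks $A_i$ are defined index-locally, cf.\ Lemma~\ref{lem:atom_order_blocks}) and any BB-certifying atom for a subset of $X_1$ inside $\nc_k$ lifts to one inside $\nc_n$. Conversely, given $X_1\in\BB_k$ and $X_2\in\BB_{n-k}$, I would glue them along the edge $\{1,n\}$ and check that the union $X=X_1\cup\{\aa_{1,n}\}\cup X_2'$ (with $X_2'$ the relabelled copy of $X_2$) is again an $\nc$-NBB base for $\one$ in $\nc_n$: it joins to $\hat 1$ since the two pieces plus the bridging edge connect all of $[n]$, and any nonempty subset $S\subseteq X$ that were BB would, by the separation structure and Lemmas~\ref{lem:atoms_bb_1}--\ref{lem:atoms_bb_2}, restrict to a BB subset of one of the two pieces (or force a crossing across the bridge, which cannot happen), contradicting $X_1,X_2\in\BB$. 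This gives a bijection $\BB_n\cong\bigcup_{k=1}^{n-1}\BB_k\times\BB_{n-k}$, so with $b_n=\lvert\BB_n\rvert$ and $b_0:=1$ we get $b_n=\sum_{k=1}^{n-1}b_k b_{n-k}$ for $n\geq 2$, equivalently $b_{n}=\sum_{j=0}^{n-2}b_{j+1}b_{n-1-j}$; reindexing, $b_n=\sum_{i=0}^{n-2}b_{i+1}b_{(n-1)-(i+1)+1}$, which matches \eqref{eq:catalan_recursion} shifted so that $b_n=\cat(n-1)$, completing the induction.

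The main obstacle I anticipate is the careful bookkeeping in the "conversely" direction: one must verify that gluing two noncrossing trees along the $\{1,n\}$ edge really does produce an $\nc$-NBB base, i.e.\ that no new BB subset is created spanning both halves. The crucial observation is that an edge $\aa_{a,b}$ with $a\in[k]$ and $b\in\{k+1,\ldots,n\}$ other than $\{1,n\}$ itself would cross $\aa_{1,n}$ only if $1<a$ or $b<n$, and since $\tau_1,\tau_2$ live on the respective intervals there is no such edge; and any would-be BB subset $S$ of the glued $X$ has $\bigvee_\nc S$ whose non-singleton blocks are intervals of $[n]$ not crossing $\{1,n\}$, so $S$ cannot mix the two halves unless it contains $\aa_{1,n}$ — and in that case the certifying atom for $\aa_{1,n}$ would have to be $\trianglelefteq$-below it, but $\aa_{1,n}$ is the minimum of $(\AA_n,\trianglelefteq)$, so no such atom exists. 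This pins down that a BB subset must sit entirely inside one half, reducing to the inductive hypothesis. The forward direction is essentially Lemma~\ref{lem:tree_separation} packaged correctly, so the write-up will mostly consist of making the atom-order restriction precise and then invoking \eqref{eq:catalan_recursion}.
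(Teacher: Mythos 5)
Your overall strategy is the same as the paper's: cut $\tau(X)$ at the edge $\{1,n\}$ via Lemma~\ref{lem:tree_separation} and recognize the Catalan recurrence \eqref{eq:catalan_recursion}. The paper's own proof is a one-liner that leaves the bijection implicit, but the explicit mechanism you supply to justify it is false, and this is a genuine gap. The problem is your claim that ``the atom order $\trianglelefteq$ on $\AA_n$ restricts compatibly,'' so that the piece $\tau_1$ on $[k]$ equals $\tau(X_1)$ for some $X_1\in\BB_k$. The order does \emph{not} restrict compatibly on the half that does not contain $n$: by Lemma~\ref{lem:atom_order_blocks}, inside $\AA_n$ the atom $\aa_{i,k}$ lies in $A_k$, whereas inside $\AA_k$ the vertex $k$ plays the distinguished role of ``$n$'' and $\aa_{i,k}$ lies in $A_i$. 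Concretely, $X=\{\aa_{1,4},\aa_{1,2},\aa_{2,3}\}$ is an $\nc$-NBB base for $\one$ in $\nc_4$, but its piece $X_1=\{\aa_{1,2},\aa_{2,3}\}$ is \emph{not} an $\nc$-NBB base for $\one$ in $\nc_3$: there it is BB, certified by $\aa_{1,3}\in A_1$. Conversely, $\{\aa_{1,3},\aa_{2,3}\}\in\BB_3$, but gluing it to the bridge yields $\{\aa_{1,4},\aa_{1,3},\aa_{2,3}\}$, which contains two atoms of $A_3\subseteq\AA_4$ and is therefore BB by Lemma~\ref{lem:atoms_bb_2}. So neither direction of your proposed bijection $\BB_n\cong\bigcup_{k}\BB_k\times\BB_{n-k}$ is realized by the cut-and-relabel map; the two families of trees on $[k]$ are equinumerous but genuinely different, and their equinumerosity is precisely what needs proof.

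The recurrence $C_n=\sum_{k=1}^{n-1}C_kC_{n-k}$ is still true, but establishing it requires treating the two halves asymmetrically. Your relabelling computation for $\tau_2$ (the half containing $n$) is fine, since there the levels do transform consistently. The half on $[k]$, however, consists of noncrossing trees having exactly one edge with maximal endpoint $j$ for each $j\in\{2,\ldots,k\}$ --- a different Catalan family --- so your step ``reducing to the inductive hypothesis'' silently identifies two distinct families. To repair the argument, either characterize the trees $\tau(X)$ for $X\in\BB_n$ intrinsically (pairwise noncrossing edges with exactly one atom in each $A_j$, which follows from Lemmas~\ref{lem:atoms_bb_1} and~\ref{lem:atoms_bb_2} together with a converse that must also be proved) and run the recursion on that characterization, or set up and solve a separate recurrence for the $[k]$-side family. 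As it stands, the proposal's central bijection claim fails on $n=4$.
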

\begin{proof}
	Let $C_{n}=\bigl\lvert\BB_{n}\bigr\rvert$.  Lemma~\ref{lem:tree_separation} implies that $C_{n}=\sum_{k=1}^{n-1}{C_{k}C_{n-k}}$, and it is quickly verified that $C_{1}=1$.  Therefore the numbers $C_{n}$ and $\cat(n-1)$ satisfy the same recurrence relation and the same initial condition and must thus be equal.
\end{proof}

In view of Theorem~\ref{thm:nbb_base_mobius} we obtain the following well-known corollary.

\begin{corollary}[\cite{kreweras72sur}*{Th{\'e}or{\`e}me~6}]\label{cor:nc_mobius}
	For $n\geq 1$ we have 
	\begin{displaymath}	
		\mu_{(\nc_{n},\dref)}(\zero,\one)=(-1)^{n-1}\cat(n-1).
	\end{displaymath}
\end{corollary}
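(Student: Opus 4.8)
The final statement to prove is Corollary~\ref{cor:nc_mobius}, which computes $\mu_{(\nc_{n},\dref)}(\zero,\one) = (-1)^{n-1}\cat(n-1)$.

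\medskip

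The plan is to apply Theorem~\ref{thm:nbb_base_mobius} (Blass--Sagan) with respect to the atom order $\trianglelefteq$ introduced just above, specialized to $x = \one$. By that theorem, $\mu_{(\nc_{n},\dref)}(\zero,\one) = \sum_{X}(-1)^{\lvert X\rvert}$, where the sum ranges over all $\nc$-NBB bases for $\one$, i.e.\ over the set $\BB_{n}$. First I would invoke Corollary~\ref{cor:nbb_bases_size}, which tells us that every element of $\BB_{n}$ has cardinality exactly $n-1$; this makes the sign $(-1)^{\lvert X\rvert}$ constant, equal to $(-1)^{n-1}$, across the entire sum. Hence $\mu_{(\nc_{n},\dref)}(\zero,\one) = (-1)^{n-1}\lvert\BB_{n}\rvert$.

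\medskip

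Second, I would substitute the enumeration of $\BB_{n}$ supplied by Corollary~\ref{cor:tree_catalan}, namely $\lvert\BB_{n}\rvert = \cat(n-1)$, which was obtained by identifying $\nc$-NBB bases with noncrossing trees and verifying that they satisfy the Catalan recurrence \eqref{eq:catalan_recursion} via the splitting Lemma~\ref{lem:tree_separation}. Combining the two inputs gives $\mu_{(\nc_{n},\dref)}(\zero,\one) = (-1)^{n-1}\cat(n-1)$, as claimed.

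\medskip

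There is essentially no obstacle here: the corollary is a one-line consequence of Theorem~\ref{thm:nbb_base_mobius} together with the already-established Corollaries~\ref{cor:nbb_bases_size} and \ref{cor:tree_catalan}. The only point worth stating explicitly is that the NBB-base formula applies because $(\nc_{n},\dref)$ is a finite lattice (Theorem~\ref{thm:partitions}) and $\trianglelefteq$ is a genuine partial order on its atoms $\AA_{n}$ (it is the pullback of a linear order on the index set $[n-1]$ via the partition into the blocks $A_{i}$, hence transitive and antisymmetric). With that observation in place the proof is complete.
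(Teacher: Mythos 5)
Your proof is correct and is precisely the argument the paper intends: Theorem~\ref{thm:nbb_base_mobius} applied to $\one$, with Corollary~\ref{cor:nbb_bases_size} fixing the sign $(-1)^{n-1}$ and Corollary~\ref{cor:tree_catalan} supplying the count $\lvert\BB_{n}\rvert=\cat(n-1)$. No differences worth noting.
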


We are now ready to prove Theorem~\ref{thm:pf_element_mobius}.

\begin{proof}[Proof of Theorem~\ref{thm:pf_element_mobius}]
	In view of Corollary~\ref{cor:tree_catalan} it remains to determine the size of $\BB_{n}\setminus\bar{\BB}_{n}$.  Essentially this set consists of three types of elements; those that contain $\aa_{1,n-1}$, those that contain $\aa_{n-1,n}$, and those that (after removal of $\aa_{1,n}$) join to $\one$ in $(\pe_{n},\dref)$.  Since every element of $\BB_{n}$ contains $\aa_{1,n}$, Lemma~\ref{lem:nbb_base_tree} implies that $X\in\BB_{n}$ cannot contain both of $\aa_{1,n-1}$ and $\aa_{n-1,n}$.
	
	Let $\SM_{n}^{(1)}=\{X\in\BB_{n}\mid\aa_{1,n-1}\in X\}$ and $\SM_{n}^{(2)}=\{X\in\BB_{n}\mid\aa_{n-1,n}\in X\}$, and let 
	\begin{displaymath}
		\RR_{n} = \Bigl\{X\in\BB_{n}\mid{\bigvee}_{\pe}\bigl(X\setminus\{\aa_{1,n}\}\bigr)=\one\Bigr\}.
	\end{displaymath}
	By construction we have $\bar{\BB}_{n}=\BB_{n}\setminus\Bigl(\SM_{n}^{(1)}\cup\SM_{n}^{(2)}\cup\RR_{n}\Bigr)$. 
	
	The proof of Theorem~\ref{thm:pf_element_lattice} implies that for $X\in\RR_{n}$ the only vertex adjacent to $n$ in the corresponding tree $\tau(X)$ is $1$.  As a consequence $\SM_{n}^{(1)}\subseteq\RR_{n}$, and $\SM_{n}^{(2)}\cap\RR_{n}=\emptyset$.  It therefore suffices to determine the cardinalities of $\SM_{n}^{(2)}$ and $\RR_{n}$. 
	
	Let $X\in\SM_{n}^{(2)}$, and let $\tau(X)$ be the corresponding noncrossing tree.  Lemma~\ref{lem:tree_separation} implies that there is some $k\in[n-1]$ such that after removing the edge between $1$ and $n$ we are left with a noncrossing tree $\tau_{1}$ on vertex set $[k]$ and a noncrossing tree $\tau_{2}$ on vertex set $\{k+1,k+2,\ldots,n\}$ which has an edge between $n-1$ and $n$.   As a consequence, $k<n-1$ and we can view $\tau_{2}$ as a noncrossing tree on $n-k-1$ vertices.  We obtain $\Bigl\lvert\SM_{1}^{(2)}\Bigr\rvert=1$, and 
	\begin{displaymath}
		\Bigl\lvert\SM_{n}^{(2)}\Bigr\rvert = \sum_{k=1}^{n-2}{\Bigl\lvert\BB_{k}\Bigr\rvert\cdot\Bigl\lvert\BB_{n-k-1}\Bigr\rvert},
	\end{displaymath}
	which in view of \eqref{eq:catalan_recursion} implies $\Bigl\lvert\SM_{n}^{(2)}\Bigr\rvert=\cat(n-2)$. 
	
	Let $X\in\RR_{n}$.  We have seen already that in $\tau(X)$ the only edge adjacent to $n$ is $1$.  It follows that the elements of $\RR_{n}$ correspond bijectively to noncrossing trees on $n-1$ vertices.  Corollary~\ref{cor:tree_catalan} then implies that $\Bigl\lvert\RR_{n}\Bigr\rvert=\cat(n-2)$. 

	We thus obtain
	\begin{align*}
		\Bigl\lvert\bar{\BB}_{n}\Bigr\rvert & = \cat(n-1)-2\cat(n-2)\\
		& = \frac{1}{n}\binom{2n-2}{n-1}-\frac{2}{n-1}\binom{2n-4}{n-2}\\
		& = \left(\frac{4(2n-3)}{n(n-3)}-\frac{4}{n-3}\right)\binom{2n-5}{n-4}\\
		& = \frac{4}{n}\binom{2n-5}{n-4},
	\end{align*}
	and the claim follows from Theorem~\ref{thm:nbb_base_mobius}.
\end{proof}

Figure~\ref{fig:nbb_bases_5} illustrates the proof of Theorem~\ref{thm:pf_element_mobius} for $n=5$.  It displays the noncrossing trees corresponding to the elements of $\BB_{5}$.  We have crossed out the trees corresponding to elements of $\SM_{5}^{(2)}$ in red, to elements of $\SM_{5}^{(1)}$ in blue, and to elements of $\RR_{5}$ in green.

\begin{figure}
	\begin{displaymath}\begin{aligned}
		& \begin{tikzpicture}[scale=.5]\tiny
			\draw(2,4) node(n1){1};
			\draw(1,3) node(n2){2};
			\draw(3,3) node(n3){5};
			\draw(1,2) node(n4){3};
			\draw(1,1) node(n5){4};
			\draw(n1) -- (n2);
			\draw(n1) -- (n3);
			\draw(n2) -- (n4);
			\draw(n4) -- (n5);
			\begin{pgfonlayer}{background}
				\draw[line width=.5cm,opacity=.5,green!50!white](1.25,3.75) -- (3.75,1.25);
				\draw[line width=.5cm,opacity=.5,green!50!white](1.25,1.25) -- (3.75,3.75);
			\end{pgfonlayer}
		\end{tikzpicture}
		&& \begin{tikzpicture}[scale=.5]\tiny
			\draw(3,3) node(n1){1};
			\draw(2,2) node(n2){2};
			\draw(4,2) node(n3){5};
			\draw(1,1) node(n4){3};
			\draw(3,1) node(n5){4};
			\draw(n1) -- (n2);
			\draw(n1) -- (n3);
			\draw(n2) -- (n4);
			\draw(n2) -- (n5);
			\begin{pgfonlayer}{background}
				\draw[line width=.5cm,opacity=.5,green!50!white](1.25,3.75) -- (3.75,1.25);
				\draw[line width=.5cm,opacity=.5,green!50!white](1.25,1.25) -- (3.75,3.75);
			\end{pgfonlayer}
		\end{tikzpicture} 
		&& \begin{tikzpicture}[scale=.5]\tiny
			\draw(2,3) node(n1){1};
			\draw(1,2) node(n2){2};
			\draw(2,2) node(n3){3};
			\draw(3,2) node(n4){5};
			\draw(2,1) node(n5){4};
			\draw(n1) -- (n2);
			\draw(n1) -- (n3);
			\draw(n1) -- (n4);
			\draw(n3) -- (n5);
			\begin{pgfonlayer}{background}
				\draw[line width=.5cm,opacity=.5,green!50!white](1.25,3.75) -- (3.75,1.25);
				\draw[line width=.5cm,opacity=.5,green!50!white](1.25,1.25) -- (3.75,3.75);
			\end{pgfonlayer}
		\end{tikzpicture}
		&& \begin{tikzpicture}[scale=.5]\tiny
			\draw(2,3) node(n1){1};
			\draw(1,2) node(n2){2};
			\draw(2,2) node(n3){4};
			\draw(3,2) node(n4){5};
			\draw(1,1) node(n5){3};
			\draw(n1) -- (n2);
			\draw(n1) -- (n3);
			\draw(n1) -- (n4);
			\draw(n2) -- (n5);
			\begin{pgfonlayer}{background}
				\draw[line width=.5cm,opacity=.5,green!50!white](1.25,3.75) -- (3.75,1.25);
				\draw[line width=.5cm,opacity=.5,blue!50!white](1.25,1.25) -- (3.75,3.75);
			\end{pgfonlayer}
		\end{tikzpicture}
		&& \begin{tikzpicture}[scale=.5]\tiny
			\draw(1,3) node(n1){1};
			\draw(3,3) node(n2){2};
			\draw(2,2) node(n3){5};
			\draw(4,2) node(n4){3};
			\draw(4,1) node(n5){4};
			\draw(n1) -- (n3);
			\draw(n2) -- (n3);
			\draw(n2) -- (n4);
			\draw(n4) -- (n5);
		\end{tikzpicture}
		&& \begin{tikzpicture}[scale=.5]\tiny
			\draw(2,3) node(n1){1};
			\draw(4,3) node(n2){4};
			\draw(1,2) node(n3){2};
			\draw(3,2) node(n4){5};
			\draw(1,1) node(n5){3};
			\draw(n1) -- (n3);
			\draw(n1) -- (n4);
			\draw(n2) -- (n4);
			\draw(n3) -- (n5);
			\begin{pgfonlayer}{background}
				\draw[line width=.5cm,opacity=.5,red!50!white](1.25,3.75) -- (3.75,1.25);
				\draw[line width=.5cm,opacity=.5,red!50!white](1.25,1.25) -- (3.75,3.75);
			\end{pgfonlayer}
		\end{tikzpicture}\\
		& \begin{tikzpicture}[scale=.5]\tiny
			\draw(2,2) node(n1){1};
			\draw(4,2) node(n2){3};
			\draw(1,1) node(n3){2};
			\draw(3,1) node(n4){5};
			\draw(5,1) node(n5){4};
			\draw(n1) -- (n3);
			\draw(n1) -- (n4);
			\draw(n2) -- (n4);
			\draw(n2) -- (n5);
		\end{tikzpicture}
		&& \begin{tikzpicture}[scale=.5]\tiny
			\draw(2,2) node(n1){1};
			\draw(3,2) node(n2){3};
			\draw(4,2) node(n3){4};
			\draw(1,1) node(n4){2};
			\draw(3,1) node(n5){5};
			\draw(n1) -- (n4);
			\draw(n1) -- (n5);
			\draw(n2) -- (n5);
			\draw(n3) -- (n5);
			\begin{pgfonlayer}{background}
				\draw[line width=.5cm,opacity=.5,red!50!white](1.25,3.75) -- (3.75,1.25);
				\draw[line width=.5cm,opacity=.5,red!50!white](1.25,1.25) -- (3.75,3.75);
			\end{pgfonlayer}
		\end{tikzpicture}
		&& \begin{tikzpicture}[scale=.5]\tiny
			\draw(2,2) node(n1){1};
			\draw(4,2) node(n2){4};
			\draw(1,1) node(n3){2};
			\draw(2,1) node(n4){3};
			\draw(3,1) node(n5){5};
			\draw(n1) -- (n3);
			\draw(n1) -- (n4);
			\draw(n1) -- (n5);
			\draw(n2) -- (n5);
			\begin{pgfonlayer}{background}
				\draw[line width=.5cm,opacity=.5,red!50!white](1.25,3.75) -- (3.75,1.25);
				\draw[line width=.5cm,opacity=.5,red!50!white](1.25,1.25) -- (3.75,3.75);
			\end{pgfonlayer}
		\end{tikzpicture} 
		&& \begin{tikzpicture}[scale=.5]\tiny
			\draw(2.5,2) node(n1){1};
			\draw(1,1) node(n2){2};
			\draw(2,1) node(n3){3};
			\draw(3,1) node(n4){4};
			\draw(4,1) node(n5){5};
			\draw(n1) -- (n2);
			\draw(n1) -- (n3);
			\draw(n1) -- (n4);
			\draw(n1) -- (n5);
			\begin{pgfonlayer}{background}
				\draw[line width=.5cm,opacity=.5,green!50!white](1.25,3.75) -- (3.75,1.25);
				\draw[line width=.5cm,opacity=.5,blue!50!white](1.25,1.25) -- (3.75,3.75);
			\end{pgfonlayer}
		\end{tikzpicture}
		&& \begin{tikzpicture}[scale=.5]\tiny
			\draw(1,2) node(n1){1};
			\draw(2,2) node(n2){4};
			\draw(3,2) node(n3){2};
			\draw(2,1) node(n4){5};
			\draw(4,1) node(n5){3};
			\draw(n1) -- (n4);
			\draw(n2) -- (n4);
			\draw(n3) -- (n4);
			\draw(n3) -- (n5);
			\begin{pgfonlayer}{background}
				\draw[line width=.5cm,opacity=.5,red!50!white](1.25,3.75) -- (3.75,1.25);
				\draw[line width=.5cm,opacity=.5,red!50!white](1.25,1.25) -- (3.75,3.75);
			\end{pgfonlayer}
		\end{tikzpicture}
		&& \begin{tikzpicture}[scale=.5]\tiny
			\draw(1,2) node(n1){1};
			\draw(3,2) node(n2){2};
			\draw(2,1) node(n3){5};
			\draw(3,1) node(n4){3};
			\draw(4,1) node(n5){4};
			\draw(n1) -- (n3);
			\draw(n2) -- (n3);
			\draw(n2) -- (n4);
			\draw(n2) -- (n5);
		\end{tikzpicture}\\
		& \begin{tikzpicture}[scale=.5]\tiny
			\draw(1,2) node(n1){1};
			\draw(2,2) node(n2){2};
			\draw(3,2) node(n3){3};
			\draw(2,1) node(n4){5};
			\draw(4,1) node(n5){4};
			\draw(n1) -- (n4);
			\draw(n2) -- (n4);
			\draw(n3) -- (n4);
			\draw(n3) -- (n5);
		\end{tikzpicture}
		&& \begin{tikzpicture}[scale=.5]\tiny
			\draw(1,2) node(n1){1};
			\draw(2,2) node(n2){2};
			\draw(3,2) node(n3){3};
			\draw(4,2) node(n4){4};
			\draw(2.5,1) node(n5){5};
			\draw(n1) -- (n5);
			\draw(n2) -- (n5);
			\draw(n3) -- (n5);
			\draw(n4) -- (n5);
			\begin{pgfonlayer}{background}
				\draw[line width=.5cm,opacity=.5,red!50!white](1.25,3.75) -- (3.75,1.25);
				\draw[line width=.5cm,opacity=.5,red!50!white](1.25,1.25) -- (3.75,3.75);
			\end{pgfonlayer}
		\end{tikzpicture}
	\end{aligned}\end{displaymath}
	\caption{The noncrossing trees corresponding to the $\nc$-NBB bases for $\one$ in $(\nc_{5},\dref)$.  We have crossed out certain trees as indicated in the proof of Theorem~\ref{thm:pf_element_mobius}.}
	\label{fig:nbb_bases_5}
\end{figure}
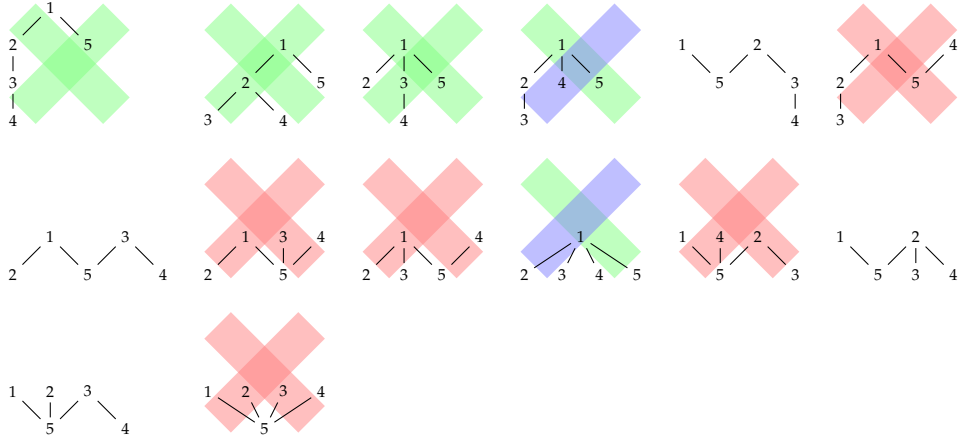

We can use the combinatorial model from above to compute $\nc$-NBB bases for any element of $\nc_{n}$, by simply picking at most one element of each rank of $(\AA_{n},\trianglelefteq)$ keeping the restriction that their join in the partition lattice is again noncrossing.  This process works since every interval in $(\nc_{n},\dref)$ is a direct product of smaller noncrossing partition lattices.  The analogous procedure for $(\pe_{n},\dref)$ does not work, due to the extra condition for $\pe$-NBB bases (Lemma~\ref{lem:atoms_bb_3}).  Moreover, the subintervals of $(\pe_{n},\dref)$ do not factor nicely into direct products of smaller lattices.  Consider the interval $[\aa_{n-2,n-1},\one]$ in $(\pe_{n},\dref)$.  The cardinalities of these intervals for $n\in\{4,5,\ldots,9\}$ are $4,12,37,118,387,1298$, and we observe that large prime factors appear in this sequence.  It seems, however, that every proper interval of $(\pe_{n},\dref)$ can be written as a direct product of an interval of the previous form and some noncrossing partition lattice. 


\section{A Subposet of $(\pe_{n},\dref)$}
	\label{sec:pf_chain_poset}
Now we consider a subposet of $(\pe_{n},\dref)$ that was introduced in \cite{bruce16decomposition}.  To that end recall that a function $f:[n]\to[n]$ is a \defn{parking function} if for all $k\in[n]$ the cardinality of $f^{-1}\bigl([k]\bigr)$ is at least $k$.  It is a classical result that the number of parking functions of length $n$ is $(n+1)^{n-1}$~\cite{haiman94conjectures}*{Proposition~2.6.1}.  

For two noncrossing partitions $\xx$ and $\yy$ with $\xx\lessdot_{\text{\normalfont dref}}\yy$, there are two unique blocks $B_{1}$ and $B_{2}$ of $\xx$ such that $B_{1}\cup B_{2}$ is a block of $\yy$.  Suppose without loss of generality that $\min B_{1}<\min B_{2}$, and define 
\begin{equation}\label{eq:parking_label}
	\pi(\xx,\yy)=\max\{j\in B_{1}\mid j\leq i\;\text{for all}\;i\in B_{2}\}.
\end{equation}
Clearly $\pi$ extends to an edge-labeling of $(\nc_{n},\dref)$; the \defn{parking labeling}.  Let $\CC_{n}$ denote the set of maximal chains of $(\nc_{n},\dref)$.  For any $X\in\CC_{n}$ the sequence $\pi(X)$ is a parking function of length $n-1$, and every such parking function arises in this way \cite{stanley97parking}*{Theorem~3.1}.  As a consequence $\bigl\lvert\CC_{n}\bigr\rvert=n^{n-2}$.  

Now let $\DD_{n}=\bigl\{X\in\CC_{n}\mid n-1\notin\pi(X)\bigr\}$ be the set of all maximal chains of $(\nc_{n},\dref)$ whose parking labeling does not contain the value $n-1$.  Let $\LL_{n}$ be the subposet of $(\nc_{n},\dref)$ whose maximal chains are precisely $\DD_{n}$, see \cite{bruce16decomposition}*{Definition~3.3}.  

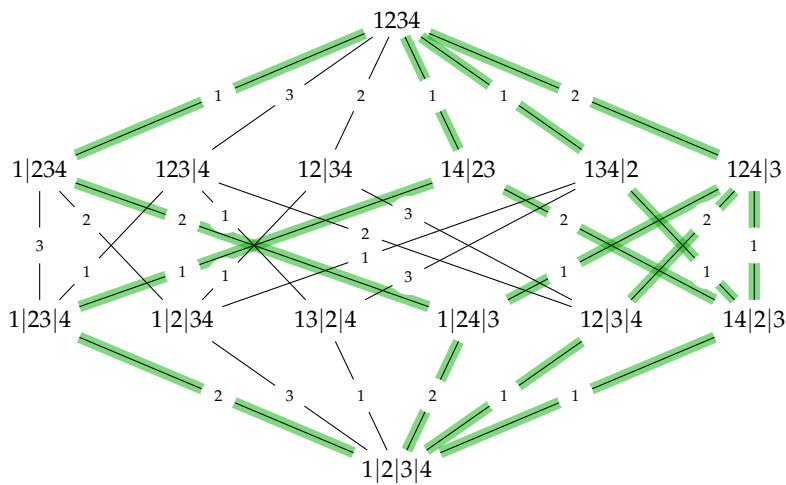
\begin{figure}
	\centering
	\begin{tikzpicture}\small
		\def\x{1.9};
		\def\y{2};
		\draw(3.5*\x,1*\y) node(n1){$1|2|3|4$};
		\draw(1*\x,2*\y) node(n2){$1|23|4$};
		\draw(2*\x,2*\y) node(n3){$1|2|34$};
		\draw(3*\x,2*\y) node(n4){$13|2|4$};
		\draw(4*\x,2*\y) node(n5){$1|24|3$};
		\draw(5*\x,2*\y) node(n6){$12|3|4$};
		\draw(6*\x,2*\y) node(n7){$14|2|3$};
		\draw(1*\x,3*\y) node(n8){$1|234$};
		\draw(2*\x,3*\y) node(n9){$123|4$};
		\draw(3*\x,3*\y) node(n10){$12|34$};
		\draw(4*\x,3*\y) node(n12){$14|23$};
		\draw(5*\x,3*\y) node(n13){$134|2$};
		\draw(6*\x,3*\y) node(n14){$124|3$};
		\draw(3.5*\x,4*\y) node(n15){$1234$};
		\draw(n1) -- (n2) node[fill=white,circle] at (2.25*\x,1.5*\y){\tiny $2$};
		\draw(n1) -- (n3) node[fill=white,circle] at (2.75*\x,1.5*\y){\tiny $3$};
		\draw(n1) -- (n4) node[fill=white,circle] at (3.25*\x,1.5*\y){\tiny $1$};
		\draw(n1) -- (n5) node[fill=white,circle] at (3.75*\x,1.5*\y){\tiny $2$};
		\draw(n1) -- (n6) node[fill=white,circle] at (4.25*\x,1.5*\y){\tiny $1$};
		\draw(n1) -- (n7) node[fill=white,circle] at (4.75*\x,1.5*\y){\tiny $1$};
		\draw(n2) -- (n8) node[fill=white,circle] at (1*\x,2.5*\y){\tiny $3$};
		\draw(n2) -- (n9) node[fill=white,circle] at (1.33*\x,2.33*\y){\tiny $1$};
		\draw(n2) -- (n12) node[fill=white,circle] at (2*\x,2.33*\y){\tiny $1$};
		\draw(n3) -- (n8) node[fill=white,circle] at (1.33*\x,2.67*\y){\tiny $2$};
		\draw(n3) -- (n10) node[fill=white,circle] at (2.3*\x,2.3*\y){\tiny $1$};
		\draw(n3) -- (n13) node[fill=white,circle] at (3.28*\x,2.42*\y){\tiny $1$};
		\draw(n4) -- (n9) node[fill=white,circle] at (2.3*\x,2.7*\y){\tiny $1$};
		\draw(n4) -- (n13) node[fill=white,circle] at (3.58*\x,2.29*\y){\tiny $3$};
		\draw(n5) -- (n8) node[fill=white,circle] at (2*\x,2.67*\y){\tiny $2$};
		\draw(n5) -- (n14) node[fill=white,circle] at (4.67*\x,2.33*\y){\tiny $1$};
		\draw(n6) -- (n9) node[fill=white,circle] at (3.28*\x,2.58*\y){\tiny $2$};
		\draw(n6) -- (n10) node[fill=white,circle] at (3.58*\x,2.71*\y){\tiny $3$};
		\draw(n6) -- (n14) node[fill=white,circle] at (5.67*\x,2.67*\y){\tiny $2$};
		\draw(n7) -- (n12) node[fill=white,circle] at (4.67*\x,2.67*\y){\tiny $2$};
		\draw(n7) -- (n13) node[fill=white,circle] at (5.67*\x,2.33*\y){\tiny $1$};
		\draw(n7) -- (n14) node[fill=white,circle] at (6*\x,2.5*\y){\tiny $1$};
		\draw(n8) -- (n15) node[fill=white,circle] at (2.25*\x,3.5*\y){\tiny $1$};
		\draw(n9) -- (n15) node[fill=white,circle] at (2.75*\x,3.5*\y){\tiny $3$};
		\draw(n10) -- (n15) node[fill=white,circle] at (3.25*\x,3.5*\y){\tiny $2$};
		\draw(n12) -- (n15) node[fill=white,circle] at (3.75*\x,3.5*\y){\tiny $1$};
		\draw(n13) -- (n15) node[fill=white,circle] at (4.25*\x,3.5*\y){\tiny $1$};
		\draw(n14) -- (n15) node[fill=white,circle] at (4.75*\x,3.5*\y){\tiny $2$};
		\begin{pgfonlayer}{background}
			\draw[opacity=.5,line width=.15cm,green!70!black](n1) -- (n2) -- (n12) -- (n15);
			\draw[opacity=.5,line width=.15cm,green!70!black](n1) -- (n5) -- (n8) -- (n15);
			\draw[opacity=.5,line width=.15cm,green!70!black](n5) -- (n14);
			\draw[opacity=.5,line width=.15cm,green!70!black](n7) -- (n12);
			\draw[opacity=.5,line width=.15cm,green!70!black](n1) -- (n6) -- (n14) -- (n15);
			\draw[opacity=.5,line width=.15cm,green!70!black](n1) -- (n7) -- (n13) -- (n15);
			\draw[opacity=.5,line width=.15cm,green!70!black](n7) -- (n14);
		\end{pgfonlayer}
	\end{tikzpicture}
	\caption{The lattice $(\nc_{4},\dref)$ with its parking labeling.  The highlighted chains form $\DD_{4}$.}
	\label{fig:partitions_4_parking}
\end{figure}

\begin{proposition}[\cite{bruce16decomposition}*{Proposition~3.4}]
	For $n\geq 3$, the ground set of $\LL_{n}$ is precisely $\pe_{n}$.
\end{proposition}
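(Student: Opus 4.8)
The plan is to pin down exactly which cover relations of $(\nc_{n},\dref)$ carry the label $n-1$ under the parking labeling $\pi$, and then to show that an element of $\nc_{n}$ lies on an $(n-1)$-avoiding maximal chain precisely when it belongs to $\pe_{n}$. The basic observation is that a cover relation $\xx\lessdot_{\text{\normalfont dref}}\yy$ satisfies $\pi(\xx,\yy)=n-1$ if and only if $\{n\}$ is a block of $\xx$ and $\yy$ arises from $\xx$ by merging $\{n\}$ with the block containing $n-1$; this is immediate from \eqref{eq:parking_label}, since the label lies in the block $B_{1}$ with smaller minimum, and equals $n-1$ exactly when $n-1=\max B_{1}$ and $n-1\leq i$ for all $i\in B_{2}$, which forces $B_{2}=\{n\}$. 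As blocks only grow along a maximal chain, a chain avoids the label $n-1$ as soon as $\{n\}$ is never merged, while still a singleton, with the block containing $n-1$.

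For the inclusion ``ground set of $\LL_{n}\subseteq\pe_{n}$'' I would show that every maximal chain through an element of $L_{1}\cup L_{2}$ hits an edge labeled $n-1$. If $\{n-1,n\}\in\xx$, consider the first step below $\xx$ at which $n-1$ and $n$ land in a common block: since the block of $n-1$ in $\xx$ is exactly $\{n-1,n\}$, that step merges the two singletons $\{n-1\}$ and $\{n\}$, whose label is $n-1$. If instead $1\sim_{\xx}n-1$ and $\{n\}\in\xx$, consider the first step above $\xx$ at which the block of $n$ grows, merging $\{n\}$ with some block $E$; writing $F$ for the block containing $1$ and $n-1$, a block $E\neq F$ would lie in $\{2,\dots,n-2\}$, and any $i\in E$ would produce the crossing $1<i<n-1<n$, a contradiction, so $E=F$ and the label is again $n-1$. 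Hence no element of $L_{1}\cup L_{2}$ lies on a chain of $\DD_{n}$.

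For the reverse inclusion I would, given $\xx\in\pe_{n}$, build a maximal chain from $\zero$ to $\one$ through $\xx$ with no edge labeled $n-1$, splitting it at $\xx$ and using the standard decompositions $[\zero,\xx]\cong\prod_{B\in\xx}\nc_{\lvert B\rvert}$ and $[\xx,\one]\cong\nc_{k}$ (with $k$ the number of blocks of $\xx$), which let one choose maximal chains factor by factor. Below $\xx$: if $n-1\not\sim_{\xx}n$, no block of $\xx$ contains both indices, so any maximal chain of $[\zero,\xx]$ works; if $n-1\sim_{\xx}n$, then $\xx\notin L_{1}$ forces their common block $B$ to satisfy $\lvert B\rvert\geq 3$, and in the corresponding factor I pick the chain that first builds up the cyclic interval $B\setminus\{n-1\}$ of $B$ and only then merges in $n-1$, so that $\{n\}$, while a singleton, is merged only into a sub-block of $B\setminus\{n-1\}$, and the block of $n-1$ stops being a singleton only after $n$ has been absorbed. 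Above $\xx$: if $n-1\sim_{\xx}n$, or $\{n\}\notin\xx$, then $\{n\}$ is never again a singleton block, so any maximal chain of $[\xx,\one]$ works; otherwise $\{n\}\in\xx$ together with $\xx\notin L_{2}$ forces $1\not\sim_{\xx}n-1$, whence the block $G$ of $\xx$ containing $1$ satisfies $\max G\leq n-2$, and I first merge $\{n\}$ with $G$ (a cover relation of label $\max G\neq n-1$) and then complete the chain arbitrarily. In each case every edge label differs from $n-1$, so $\xx$ lies on a chain of $\DD_{n}$, and the two inclusions together identify the ground set of $\LL_{n}$ with $\pe_{n}$.

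The main obstacle is the bookkeeping in the construction for the reverse inclusion: one must check that the prescribed merges really are cover relations of $(\nc_{n},\dref)$ --- this is where the interval decompositions and the cyclic-interval description of cover relations come in --- and recompute each label with care, respecting the $\min B_{1}<\min B_{2}$ convention in \eqref{eq:parking_label}; the degenerate case $n=3$ is then checked directly and is immediate.
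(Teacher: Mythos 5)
Your argument is correct, and it is worth noting that the paper itself offers no proof of this statement: it is quoted verbatim from \cite{bruce16decomposition}*{Proposition~3.4}, so there is nothing internal to compare against. Your proof supplies the missing content in a clean way. The key observation --- that $\pi(\xx,\yy)=n-1$ if and only if $B_{2}=\{n\}$ and $n-1\in B_{1}$, i.e.\ the cover merges the singleton $\{n\}$ into the block containing $n-1$ --- is exactly right, and both inclusions follow from it as you describe: elements of $L_{1}$ force such a merge below them (the two singletons $\{n-1\}$ and $\{n\}$ must be joined), elements of $L_{2}$ force it above them (the first block to absorb $\{n\}$ must contain $n-1$, since any other choice crosses the pair $1\sim n-1$), and for $\xx\in\pe_{n}$ your explicit chain through $\xx$ avoids the label by postponing $n-1$ below $\xx$ and by first attaching $\{n\}$ to the block of $1$ above $\xx$. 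The verifications you flag as the ``main obstacle'' all go through: $G\cup\{n\}$ with $1\in G$ cannot create a crossing with the remaining blocks of $\xx$, and the final merge of $B\setminus\{n-1\}$ with $\{n-1\}$ has $B_{2}=\{n-1\}\neq\{n\}$, hence carries a label at most $n-2$. Two cosmetic remarks: calling $B\setminus\{n-1\}$ a ``cyclic interval'' of $[n]$ is inaccurate (it is the complement of a single point of $B$, which is all you need for $\bigl\{B\setminus\{n-1\},\{n-1\}\bigr\}$ to be noncrossing in the factor), and in the case analysis above $\xx$ the first disjunct $n-1\sim_{\xx}n$ is subsumed by $\{n\}\notin\xx$.
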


\begin{figure}
	\centering
	\begin{tikzpicture}\small
		\def\x{2};
		\def\y{2};
		\draw(2.5*\x,1*\y) node(n1){$1|2|3|4$};
		\draw(1*\x,2*\y) node(n2){$1|24|3$};
		\draw(2*\x,2*\y) node(n3){$1|23|4$};
		\draw(3*\x,2*\y) node(n4){$12|3|4$};
		\draw(4*\x,2*\y) node(n5){$14|2|3$};
		\draw(1*\x,3*\y) node(n6){$1|234$};
		\draw(2*\x,3*\y) node(n7){$124|3$};
		\draw(3*\x,3*\y) node(n8){$14|23$};
		\draw(4*\x,3*\y) node(n9){$134|2$};
		\draw(2.5*\x,4*\y) node(n10){$1234$};
		\draw(n1) -- (n2) node[fill=white] at (1.75*\x,1.5*\y){\tiny $2$};
		\draw(n1) -- (n3) node[fill=white] at (2.25*\x,1.5*\y){\tiny $3$};
		\draw(n1) -- (n4) node[fill=white] at (2.75*\x,1.5*\y){\tiny $2$};
		\draw(n1) -- (n5) node[fill=white] at (3.25*\x,1.5*\y){\tiny $1$};
		\draw(n2) -- (n6) node[fill=white] at (1*\x,2.5*\y){\tiny $3$};
		\draw(n2) -- (n7) node[fill=white] at (1.33*\x,2.33*\y){\tiny $1$};
		\draw(n3) -- (n8) node[fill=white] at (2.33*\x,2.33*\y){\tiny $1$};
		\draw(n4) -- (n7) node[fill=white] at (2.67*\x,2.33*\y){\tiny $1$};
		\draw(n5) -- (n7) node[fill=white] at (3*\x,2.5*\y){\tiny $2$};
		\draw(n5) -- (n8) node[fill=white] at (3.5*\x,2.5*\y){\tiny $3$};
		\draw(n5) -- (n9) node[fill=white] at (4*\x,2.5*\y){\tiny $3$};
		\draw(n6) -- (n10) node[fill=white] at (1.75*\x,3.5*\y){\tiny $1$};
		\draw(n7) -- (n10) node[fill=white] at (2.25*\x,3.5*\y){\tiny $3$};
		\draw(n8) -- (n10) node[fill=white] at (2.75*\x,3.5*\y){\tiny $2$};
		\draw(n9) -- (n10) node[fill=white] at (3.25*\x,3.5*\y){\tiny $2$};
		\begin{pgfonlayer}{background}
			\draw[opacity=.5,line width=.15cm,green!70!black](n1) -- (n5) -- (n7) -- (n10);
		\end{pgfonlayer}
	\end{tikzpicture}
	\caption{The poset $(\pe_{4},\pchn)$.  The labeling is inherited from $(\pe_{4},\dref)$, see Figure~\ref{fig:pf_element_4}.}
	\label{fig:pf_chain_4}
\end{figure}
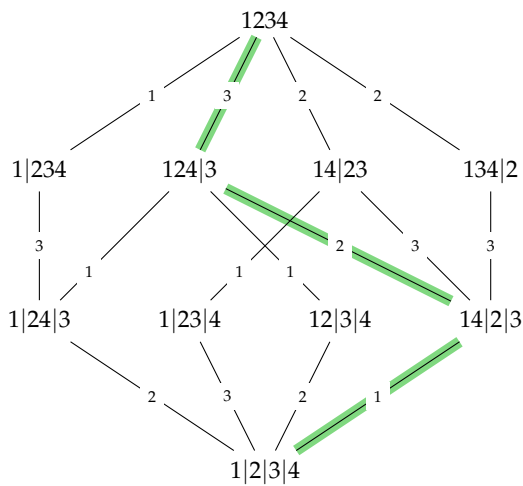

We can therefore write $\LL_{n}=(\pe_{n},\pchn)$, where $\pchn$ is a subset of $\dref$.  Figure~\ref{fig:pf_chain_4} shows the poset $(\pe_{4},\pchn)$.  This poset was extensively studied in \cite{bruce16decomposition}.  For our purposes the next statement is the most relevant.

\begin{theorem}[\cite{bruce16decomposition}*{Theorem~C}]\label{thm:pf_chain_mobius}
	For $n\geq 3$ we have $\mu_{(\pe_{n},\pchn)}(\zero,\one)=0$.
\end{theorem}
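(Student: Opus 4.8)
The plan is to read the M\"obius value off the lexicographic shellability of $(\pe_n,\pchn)$. By Theorem~\ref{thm:pf_chain_shellable} the poset is EL-shellable, and the witnessing labeling is the restriction to $\HH\bigl((\pe_n,\pchn)\bigr)$ of the $\SS_{n-1}$ EL-labeling $\lambda$ of $(\pe_n,\dref)$ defined by \eqref{eq:left_modular_labeling} for the modular chain \eqref{eq:good_chain}. Since $(\pe_n,\pchn)$ is built from cover relations occurring in maximal chains of $(\nc_n,\dref)$, each of its cover relations is a cover relation of $(\pe_n,\dref)$, so (using Lemma~\ref{lem:pf_element_graded}) every maximal chain of $(\pe_n,\pchn)$ is a maximal chain of $(\pe_n,\dref)$; in particular $(\pe_n,\pchn)$ is bounded and graded of rank $n-1$, and Theorem~\ref{thm:shellable_wedge} applies. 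It identifies $\bigl\lvert\mu_{(\pe_n,\pchn)}(\zero,\one)\bigr\rvert$ with the number of maximal chains of $(\pe_n,\pchn)$ on which $\lambda$ is weakly decreasing. As $\lambda$ assigns the $n-1$ cover relations of a maximal chain pairwise distinct values in $[n-1]$, such a chain must carry the label word $(n-1,n-2,\ldots,1)$. Thus Theorem~\ref{thm:pf_chain_mobius} is equivalent to: no maximal chain of $(\pe_n,\pchn)$ has $\lambda$-word $(n-1,n-2,\ldots,1)$.

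Next I would translate this into the parking labeling. Recall that the maximal chains of $(\pe_n,\pchn)$ are precisely the maximal chains of $(\nc_n,\dref)$ whose parking labeling \eqref{eq:parking_label} avoids the value $n-1$, and that all their elements lie in $\pe_n$. A short noncrossing argument gives: along any maximal chain of $(\nc_n,\dref)$ there is a unique cover at which $n-1$ and $n$ first enter a common block, its parking label equals $\max B_1=n-1$ exactly when the block of $n$ is the singleton $\{n\}$ just before the merge, and otherwise (when the block of $n$ has strictly smaller minimum than the block of $n-1$) the parking label drops below $n-1$; moreover that cover is the only one whose parking label can equal $n-1$. Hence a maximal chain of $(\pe_n,\dref)$ fails to be a chain of $(\pe_n,\pchn)$ exactly when $n$ is still isolated at its ``$n-1,n$-merge''. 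So the whole question reduces to the combinatorial claim: \emph{every maximal chain of $(\pe_n,\dref)$ with $\lambda$-word $(n-1,n-2,\ldots,1)$ has the singleton block $\{n\}$ immediately before $n$ joins the block of $n-1$.}

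To prove this I would unwind \eqref{eq:left_modular_labeling} using the meet and join descriptions of Section~\ref{sec:noncrossing_set_partitions} together with Theorem~\ref{thm:pf_element_lattice} and Proposition~\ref{prop:equal_join_meet}. A first step, already visible in the case $n=4$, is that the initial cover $\zero\lessdot\zz_{1}$ of a $\lambda$-falling chain must be $\zz_{1}=\aa_{a,n-1}$ with $2\leq a\leq n-2$: only a cover that ``uses up'' the point $n-1$ can receive the top label $n-1$, and $\aa_{1,n-1},\aa_{n-1,n}\notin\pe_{n}$. Thus $n-1$ is absorbed into a non-trivial block at the very first step while $n$ remains a singleton. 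I expect that, in parallel with the noncrossing-tree model of the NBB bases in Section~\ref{sec:pf_element_mobius}, the $\lambda$-falling maximal chains of $(\pe_n,\dref)$ admit a combinatorial model of the same cardinality $\tfrac{4}{n}\binom{2n-5}{n-4}$ supplied by Theorem~\ref{thm:pf_element_mobius}, and that within this model one can follow the block of $n$ downward along the chain and check that it stays equal to $\{n\}$ until the forced merge with the block of $n-1$.

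The main obstacle is exactly this last bookkeeping. Unlike $(\nc_n,\dref)$, the lattice $(\pe_n,\dref)$ has intervals that do not factor as products of smaller noncrossing partition lattices, so \eqref{eq:left_modular_labeling} has to be evaluated by hand, and the $\lambda$-falling chains may split into several shapes according to how the block of $1$, the block of $n-1$, and the singleton $\{n\}$ interact, which would call for a case analysis. If a uniform model proves hard to pin down, a fallback is a direct sign-reversing involution on the chains of $(\pe_n,\pchn)$ in the spirit of Philip Hall's formula, pairing chains according to the presence or absence of a suitably chosen element; but I expect the route above, which confines the whole problem to the single label word $(n-1,n-2,\ldots,1)$, to be the cleanest.
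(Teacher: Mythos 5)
First, note that the paper does not actually prove this statement: it is imported verbatim from \cite{bruce16decomposition}*{Theorem~C} and then used \emph{together with} Theorem~\ref{thm:pf_chain_shellable} to deduce Corollary~\ref{cor:pf_chain_contractible}. Your plan runs the implication in the other direction, extracting the M{\"o}bius value from the shellability via Theorem~\ref{thm:shellable_wedge}; this is a genuinely different route, and it is non-circular, since Theorem~\ref{thm:pf_chain_shellable} is established in the paper (Propositions~\ref{prop:removing_edge_shellable} and \ref{prop:pf_chain_removed_edge_label}) without any appeal to the present statement. Your reductions are all correct: the maximal chains of $(\pe_{n},\pchn)$ are exactly the maximal chains of $(\pe_{n},\dref)$ none of whose covers has parking label $n-1$; a weakly decreasing chain must carry the label word $(n-1,n-2,\ldots,1)$; a cover has parking label $n-1$ precisely when a block containing $n-1$ absorbs the singleton $\{n\}$; and the first cover of a falling chain must be $\zero\lessdot\aa_{a,n-1}$ with $2\leq a\leq n-2$.

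There is, however, a genuine gap: the assertion to which you reduce everything --- that in every $\lambda$-falling maximal chain of $(\pe_{n},\dref)$ the block of $n$ is still the singleton $\{n\}$ when it first meets the block of $n-1$ --- carries the entire content of the theorem, and you only state that you ``expect'' it to hold. It is true and can be closed as follows. From \eqref{eq:left_modular_labeling} (using the description of $\ww\vee_{\nc}\xx_{i}$ in the proof of Proposition~\ref{prop:xi_modular} and Proposition~\ref{prop:equal_join_meet}) one computes that the label of a cover merging blocks $B_{1},B_{2}$ equals $\min B_{j}$ for the block $B_{j}$ not containing $n$ when one of them contains $n$, and $\max(\min B_{1},\min B_{2})$ otherwise. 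Write $C_{s}$ and $N_{s}$ for the blocks of $n-1$ and of $n$ after $s$ steps, let $T$ be the step at which they merge, and suppose $N$ first grows at a step $t<T$ by absorbing a block $B$ with $n-1\notin B$. Since $B\cup\{n\}$ may not cross $C_{t-1}\supseteq\{a,n-1\}$, every element of $B$ lies below $\min C_{t-1}$, so the label at step $t$ is $b:=\min B<\min C_{t-1}$; on the other hand the label at step $T$ is $\min C_{T-1}$, which is smaller than $b$ because labels strictly decrease. Hence $\min C$ must drop below $b$ at some step $s$ with $t<s<T$; but that step merges two blocks not containing $n$ and therefore carries the label $\min C_{s-1}>b$, contradicting that all labels after step $t$ are smaller than $b$. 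Without this argument (or the sign-reversing involution you mention but do not construct), the proposal is a correct reduction rather than a proof.
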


The main goal of this section is to prove Theorem~\ref{thm:pf_chain_shellable} and Corollary~\ref{cor:pf_chain_contractible}, which essentially proves the conjecture in \cite{bruce16decomposition}.  To that end we show that the restriction of the EL-labeling of $(\pe_{n},\dref)$ coming from the left-modular chain \eqref{eq:good_chain} is an EL-labeling of $(\pe_{n},\pchn)$.  First we need to show that the property of being an EL-labeling is preserved under removing particular cover relations.

\begin{proposition}\label{prop:removing_edge_shellable}
	Let $\LL=(L,\leq)$ be a bounded graded poset with an EL-labeling $\lambda$.  Let $x,y\in L\setminus\{\hat{0},\hat{1}\}$ with $x\lessdot y$.  Let $\LL'$ be the poset that arises from $\LL$ by removing the cover relation $(x,y)$.  If there is some $y'\in L$ with $x\lessdot y'$ and $\lambda(x,y)\succ\lambda(x,y')$, then the restriction of $\lambda$ to $\LL'$ is again an EL-labeling.
\end{proposition}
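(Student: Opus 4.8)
The plan is to verify conditions (i) and (ii) of the definition of an EL-labeling for every interval of $\LL'$. First I would record the effect of the deletion: one checks that $\LL'$ is again bounded and graded (the extra upper cover $y'$ of $x$ is what keeps the deletion from disconnecting anything), with Hasse diagram $\HH(\LL)\setminus\{(x,y)\}$ and the same rank function as $\LL$; hence, for $u\le' v$, the maximal chains of $[u,v]$ in $\LL'$ are precisely those maximal chains of $[u,v]$ in $\LL$ that do not traverse $(x,y)$, all of them of length $\rk(v)-\rk(u)$. If $u\not\le x$ or $y\not\le v$ in $\LL$, then no maximal chain of $[u,v]$ uses $(x,y)$, so the interval is literally unchanged, labels included, and (i) and (ii) are inherited verbatim. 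Thus only intervals $[u,v]$ of $\LL'$ with $u\le x\lessdot y\le v$ in $\LL$ need attention.

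For such an interval the crux is the claim that the unique rising maximal chain $R$ of $[u,v]$ in $\LL$ does not traverse $(x,y)$. Granting it, $R$ lies inside $[u,v]$ in $\LL'$ as well: every initial and every final segment of $R$ avoids $(x,y)$, so it certifies $u\le' r$ and $r\le' v$ for each vertex $r$ of $R$, and each step of $R$ is still a cover in $\LL'$. Moreover $R$ is rising; it is the only rising chain of $[u,v]$ in $\LL'$ because it is already the only one in $\LL$; and, being lexicographically least among all maximal chains of $[u,v]$ in $\LL$, it is a fortiori least among the sub-family that avoids $(x,y)$, i.e.\ among the maximal chains of $[u,v]$ in $\LL'$. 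This gives (i) and (ii) for $\LL'$, and hence the proposition.

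To prove the claim I would argue by contradiction. If $R$ traverses $(x,y)$, then the segment of $R$ from $x$ to $v$ is the rising chain of $[x,v]$ in $\LL$ and begins with the edge $(x,y)$; since the first label of a rising chain is minimal among the labels of all edges leaving the bottom element of the interval, this forces $\lambda(x,y)\preceq\lambda(x,c)$ for every upper cover $c$ of $x$ with $c\le v$, and together with $\lambda(x,y)\succ\lambda(x,y')$ it follows that $y'\not\le v$. On the other hand, because $[u,v]$ is still an interval of $\LL'$ there is a maximal chain $C$ of $[u,v]$ in $\LL$ that avoids $(x,y)$; running the standard straightening procedure that moves $C$ towards its lexicographically least competitor by replacing each descent with the rising chain of the corresponding length-two subinterval, while forbidding any exchange that would reintroduce the edge $(x,y)$, one aims to arrive at a maximal chain of $[u,v]$ in $\LL$ that is either rising — hence a second rising chain distinct from $R$ — or lexicographically below $R$, either of which contradicts the fact that $\lambda$ is an EL-labeling of $\LL$. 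The step I expect to be the real obstacle is precisely this last one: one must control the single type of descent whose straightening would want to use $(x,y)$ — it occurs only when the rising chain of the relevant length-two interval has the shape $x\lessdot y\lessdot\,\cdot$ — and show, using the extra cover $y'$ together with $\lambda(x,y')\prec\lambda(x,y)$, that such a descent cannot block the reduction. Everything else (the structure of $\LL'$, the identification of the affected intervals, and the bookkeeping above) is routine.
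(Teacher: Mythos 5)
Your reduction is sound, and it matches what actually has to be proved: since the maximal chains of an interval $[u,v]$ of $\LL'$ are exactly the maximal chains of $[u,v]$ in $\LL$ that avoid $(x,y)$, the only possible failure is that the unique rising chain of some interval $[u,v]$ with $u\leq x\lessdot y\leq v$ and $u\leq v$ in $\LL'$ traverses $(x,y)$. Your partial progress is also correct: if that rising chain traverses $(x,y)$, its segment from $x$ to $v$ is the rising, hence lexicographically least, chain of $[x,v]$, so $\lambda(x,y)\preceq\lambda(x,c)$ for every cover $x\lessdot c$ with $c\leq v$, whence $y'\not\leq v$. But the step you defer to a ``straightening procedure'' is not bookkeeping --- it is the entire proposition, and it cannot be completed in the stated generality. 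Once $y'\not\leq v$, the hypothesis $\lambda(x,y')\prec\lambda(x,y)$ carries no information about the interval $[u,v]$, and no contradiction is available.

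Concretely, consider the bounded graded rank-$4$ poset on $\{\hat{0},x,p,w,y,y',q,v,v',\hat{1}\}$ with cover relations and labels $\hat{0}\lessdot x\,(1)$, $\hat{0}\lessdot p\,(5)$, $\hat{0}\lessdot w\,(7)$, $x\lessdot y'\,(2)$, $x\lessdot y\,(3)$, $x\lessdot q\,(5)$, $w\lessdot y\,(2)$, $p\lessdot q\,(4)$, $y'\lessdot v'\,(3)$, $y\lessdot v'\,(3)$, $y\lessdot v\,(6)$, $q\lessdot v\,(5)$, $v'\lessdot\hat{1}\,(4)$, $v\lessdot\hat{1}\,(6)$. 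An interval-by-interval check shows this is an EL-labeling, and $\lambda(x,y)=3\succ 2=\lambda(x,y')$. After deleting $(x,y)$ the poset $\LL'$ is still bounded and graded, but the interval $[\hat{0},v]$ of $\LL'$ has maximal chains labeled $(7,2,6)$, $(1,5,5)$, $(5,4,5)$ and hence no rising chain: the EL-property is lost. So the step you flagged genuinely fails; the proposition needs a stronger hypothesis, e.g.\ that $y'$ lies in every interval whose rising chain uses the edge $(x,y)$. Be aware that the paper's own proof leaps over exactly this point --- from the rising chain of the offending interval $[x,z]$ beginning with $(x,y)$ it concludes $\lambda(x,y)\preceq\lambda(x,y')$ for \emph{all} upper covers $y'$ of $x$, which lexicographic minimality only yields for those $y'$ with $y'\leq z$ --- so justifying the application to $(\pe_{n},\pchn)$ requires additional information about the specific element $\yy'$ constructed in Proposition~\ref{prop:pf_chain_removed_edge_label}, not a general straightening argument.
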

\begin{proof}
	Let $\lambda'$ denote the restriction of $\lambda$ to $\LL'$, and let $x,y$ be the elements from the statement.  We proceed by contraposition and suppose that $\lambda'$ is not an EL-labeling of $\LL'$.  
	
	Note that $\CC(\LL')\subseteq\CC(\LL)$, and for $X\in\CC(\LL')$ we have $\lambda'(X)=\lambda(X)$.  Since $\lambda'$ is not an EL-labeling of $\LL'$, there must be some interval $I'$ in $\LL'$ in which the EL-property of $\lambda'$ fails.  We conclude that $x,y\in I'$ (since otherwise $\lambda\equiv\lambda'$ on $I'$, which is a contradiction).  We can moreover assume without loss of generality that $x$ is the least element of $I'$, \ie $I'=[x,z]$ for some $z$.  Let $I$ be the corresponding interval in $\LL$.  There are three possibilities for $\lambda'$ to fail to be an EL-labeling of $I'$.  The existence of more than one rising maximal chain in $I'$ contradicts the assumption that $\lambda$ is an EL-labeling of $I$, and the same holds for the assumption that the unique rising chain of $I'$ is not lexicographically first.  It follows that there does not exist a rising maximal chain in $I'$.  Since there is a rising maximal chain $X$ in $I$, we conclude that $x,y\in X$; in particular $x$ is the first and $y$ is the second element of $X$.  Since $\lambda$ is an EL-labeling of $\LL$, we conclude that $\lambda(x,y)\preceq\lambda(x,y')$ for all $y'\in L$ with $x\lessdot y'$.
\end{proof}

By definition $(\pe_{n},\pchn)$ is obtained from $(\pe_{n},\dref)$ by removing certain cover relations, and the next results states that these satisfy the condition from Proposition~\ref{prop:removing_edge_shellable}.

\begin{proposition}\label{prop:pf_chain_removed_edge_label}
	Let $\xx,\yy\in\pe_{n}$ such that $\pi(\xx,\yy)=n-1$, where $\pi$ is the labeling defined in \eqref{eq:parking_label}.  There exists $\yy'\in\pe_{n}$ with $\xx\lessdot_{\text{\normalfont dref}}\yy'$ such that $\pi(\xx,\yy')<n-1$ and $\lambda(\xx,\yy)>\lambda(\xx,\yy')$, where $\lambda$ is the EL-labeling of $(\pe_{n},\dref)$ coming from the left-modular chain \eqref{eq:good_chain}.
\end{proposition}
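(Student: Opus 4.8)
The plan is to decode the hypothesis $\pi(\xx,\yy)=n-1$ into explicit structure, write down a candidate $\yy'$, and verify the three requirements by comparing the joins $\xx\vee_{\pe}\xx_{i}$, which by Proposition~\ref{prop:equal_join_meet} agree with $\xx\vee_{\nc}\xx_{i}$. Since $\xx\lessdot_{\text{\normalfont dref}}\yy$ there are blocks $B_{1},B_{2}$ of $\xx$ with $\min B_{1}<\min B_{2}$, $B_{1}\cup B_{2}\in\yy$, and $\max\{j\in B_{1}\mid j\leq\min B_{2}\}=n-1$. Hence $n-1\in B_{1}$ and $\min B_{2}\geq n-1$, and since $\min B_{2}\neq n-1$ this forces $B_{2}=\{n\}$ and $\max B_{1}=n-1$. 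Put $m=\min B_{1}$. Because $\{n\}\in\xx$ and $\xx\in\pe_{n}$ (so $\xx\notin L_{2}$), we cannot have $1\sim_{\xx}n-1$; thus $1\notin B_{1}$ and $m\geq 2$.

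Next I would record a dichotomy coming from noncrossingness of $\xx$: every block of $\xx$ other than $B_{1}$ and $\{n\}$ lies either in $[1,m-1]$ (a \emph{left} block) or in $\{m+1,\ldots,n-2\}$ (an \emph{inside} block), since otherwise it would cross $B_{1}$, whose ``span'' is $\{m,\ldots,n-1\}$. In particular the left blocks partition $[1,m-1]$. Let $B'$ be the block of $\xx$ containing $1$; it is a left block, so $B'\neq B_{1},\{n\}$ and $\max B'\leq m-1$. Define $\yy'$ to be $\xx$ with the blocks $B'$ and $\{n\}$ merged into $B'\cup\{n\}$. Then $\xx\lessdot_{\text{\normalfont dref}}\yy'$ (a cover in $(\pe_{n},\dref)$ once $\yy'\in\pe_{n}$ is known, using that this poset is graded), and $\yy'\in\pe_{n}$: the only new block $B'\cup\{n\}$ is not $\{n-1,n\}$ because $n-1\notin B'$, and $\{n\}$ is no longer a block of $\yy'$, so neither $L_{1}$ nor $L_{2}$ is met. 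Moreover $\pi(\xx,\yy')=\max B'\leq m-1<n-1$, which is the second required property.

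It remains to show $\lambda(\xx,\yy)>\lambda(\xx,\yy')$. Since $\xx_{i}\dref\xx_{i+1}$, the map $i\mapsto\xx\vee_{\nc}\xx_{i}$ is monotone, and $\lambda(\xx,\cdot)$ is governed by the least index $i$ with $\cdot\dref\xx\vee_{\nc}\xx_{i}$ (a larger such index meaning a larger label), so it suffices to prove $\yy'\dref\xx\vee_{\nc}\xx_{m}$ while $\yy\not\dref\xx\vee_{\nc}\xx_{m}$. The main computation is to identify $\xx\vee_{\nc}\xx_{m}$ explicitly: the non-singleton block $[m-1]\cup\{n\}$ of $\xx_{m}$ meets exactly the left blocks and $\{n\}$, and since the left blocks partition $[1,m-1]$ the partition-join is $\{[1,m-1]\cup\{n\}\}\cup\{B_{1}\}\cup\{\text{inside blocks}\}$; this partition is already noncrossing, because the block $[1,m-1]\cup\{n\}$ has $n$ as its only element in $\{m,\ldots,n\}$ and therefore cannot cross $B_{1}$ or any inside block (all contained in $\{m,\ldots,n-1\}$), so it equals $\xx\vee_{\nc}\xx_{m}$. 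Now every block of $\yy'$ is either $B'\cup\{n\}\subseteq[1,m-1]\cup\{n\}$ or a block of $\xx$ different from $B'$ and $\{n\}$, hence a left block, $B_{1}$, or an inside block; each of these is contained in a block of $\xx\vee_{\nc}\xx_{m}$, so $\yy'\dref\xx\vee_{\nc}\xx_{m}$. On the other hand the block $B_{1}\cup\{n\}$ of $\yy$ contains $m\in B_{1}$ and $n$, which lie in different blocks of $\xx\vee_{\nc}\xx_{m}$, so $\yy\not\dref\xx\vee_{\nc}\xx_{m}$. This gives $\lambda(\xx,\yy)>\lambda(\xx,\yy')$ and completes the argument.

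The step I expect to be the main obstacle is the explicit identification of $\xx\vee_{\nc}\xx_{m}$, since one has to control the noncrossing closure well enough to know that $B_{1}$ is not absorbed into the block containing $n$. The structural dichotomy (left versus inside blocks) is precisely what makes this tractable; everything else is routine bookkeeping about containment of blocks and membership in $\pe_{n}$.
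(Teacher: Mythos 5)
Your construction of $\yy'$ is exactly the paper's: merge the block containing $1$ with the singleton $\{n\}$. The argument is correct, and the hypothesis decoding and the verification of $\pi(\xx,\yy')<n-1$ match the paper. Where you genuinely diverge is in proving $\lambda(\xx,\yy)>\lambda(\xx,\yy')$: you identify $\xx\vee_{\nc}\xx_{m}$ explicitly (with $m=\min B_{1}$) via your left/inside block dichotomy and test both covers against it, whereas the paper invokes the equivalent form $\lambda(\ww,\zz)=\min\{i-1\mid\xx_{i}\not\dref\ww\;\text{and}\;\xx_{i}\dref\zz\}$ of the left-modular labeling and simply notes that $\xx_{2}\dref\yy'$ (because $1\sim_{\yy'}n$) while $\xx_{2}\not\dref\xx$ and $\xx_{2}\not\dref\yy$, which pins down $\lambda(\xx,\yy')=1$, the smallest possible label, with no join computation at all. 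So the step you flag as the main obstacle can be bypassed entirely; your route costs more work but buys the explicit description of $\xx\vee_{\nc}\xx_{m}$, which is not needed here. One point to tighten: when you check $\yy'\in\pe_{n}$ you only verify $\yy'\notin L_{1}\cup L_{2}$, but you also need $\yy'\in\nc_{n}$, and your dichotomy by itself rules out crossings of $B'\cup\{n\}$ only with $B_{1}$ and the inside blocks, not with the other left blocks. The missing observation is that $1\in B'$: any crossing configuration $i<j<k<l$ involving $l=n$ and $j\in B'$ against a block $D$ yields, since $1\in B'$ and $1<i$, the configuration $1<i<j<k$ with $1\sim_{\xx}j$ and $i\sim_{\xx}k$, i.e.\ a crossing of $B'$ with $D$ already in $\xx$. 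This is a one-line fix, and the paper's own justification of this point is comparably terse.
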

\begin{proof}
	Let $\xx$ and $\yy$ be as desired.  Since $\pi(\xx,\yy)=n-1$, there must be a block $B$ of $\xx$ containing $n-1$, and $\{n\}$ must be a singleton block of $\xx$.  Moreover, $\yy$ must contain the block $B\cup\{n\}$.  Since $\xx\in\pe_{n}$ we conclude that $B\neq\{n-1\}$ and $1\notin B$; in particular $\xx\neq\zero$ and $\yy\neq\one$.  Let $A$ be the block of $\xx$ containing $1$.  Let $\yy'$ be the partition that contains all blocks of $\xx$ except that $A$ and $\{n\}$ are replaced by $A\cup\{n\}$.  Since $\xx\in\pe_{n}$, the blocks $A$ and $B$ cannot be crossing, which implies that $\yy'\in\pe_{n}$.  Moreover, we have $\xx\lessdot_{\text{\normalfont dref}}\yy'$.  We claim that $\yy'$ is the desired element.  
	
	First of all $\pi(\xx,\yy')<n-1$, since $n-1\notin A$, so that the cover relation $\xx\lessdot_{\text{\normalfont dref}}\yy'$ is still present in $(\pe_{n},\pchn)$.  
	
	Recall that the left-modular chain \eqref{eq:good_chain} of $(\pe_{n},\dref)$ consists of the elements $\xx_{i}$ given by the unique non-singleton block $[i-1]\cup\{n\}$.  Since $(\pe_{n},\dref)$ is supersolvable (Theorem~\ref{thm:pf_element_supersolvable}), it follows from the results in \cite{liu99left} (see also \cite{thomas06analogue}*{Proposition~2}) that the labeling $\lambda$ defined in \eqref{eq:left_modular_labeling} is equivalent to the labeling
	\begin{displaymath}
		\lambda(\ww,\zz) = \min\{i-1\mid\xx_{i}\not\dref\ww\;\text{and}\;\xx_{i}\dref\zz\}.
	\end{displaymath}
	(The ``$-1$'' in this definition comes from the fact that we label the elements in \eqref{eq:good_chain} by $\xx_{1},\xx_{2},\ldots,\xx_{n}$, and we want a labeling using the label set $[n-1]$.)  
	
	Observe that $\xx_{2}\dref\yy'$, since $\{1,n\}$ is the unique non-singleton block of $\xx_{2}$, and $1\sim_{\yy'}n$.   Since $1\not\sim_{\xx}n$, we conclude $\xx_{2}\not\dref\xx$, which implies $\lambda(\xx,\yy')=1$.  On the other hand, $1\not\sim_{\yy}n$, which implies $\xx_{2}\not\dref\yy$.  We thus have $\lambda(\xx,\yy)>1=\lambda(\xx,\yy')$.  (In fact we have $\lambda(\xx,\yy)=k$, where $k=\min B$.)
\end{proof}

We conclude this article with the remaining proofs.

\begin{proof}[Proof of Theorem~\ref{thm:pf_chain_shellable}]
	This follows by construction from Propositions~\ref{prop:removing_edge_shellable} and \ref{prop:pf_chain_removed_edge_label}.
\end{proof}

\begin{proof}[Proof of Corollary~\ref{cor:pf_chain_contractible}]
	This follows from Theorem~\ref{thm:pf_chain_shellable} and Theorems~\ref{thm:pf_chain_mobius} and \ref{thm:shellable_wedge}.
\end{proof}

\bibliography{../../literature}

\end{document}